\newtheorem{theorem}{Theorem}
\newtheorem{proposition}{Proposition}
\theoremstyle{remark}
\newtheorem{remark}{Remark}
\newtheorem{definition}{Definition}
\newtheorem{example}{Example}
\newtheorem{lemma}{Lemma}
\numberwithin{equation}{section}
\newcommand{\bq}{\begin{equation}}
\newcommand{\eq}{\end{equation}}
\newcommand{\R}{\mathbb{R}}
\newcommand{\Rd}{\R^d}
\newcommand{\bO}{\mathcal{O}}
\newcommand{\USC}{\text{USC}}
\newcommand{\LSC}{\text{LSC}}
\newcommand{\usub}{\overline{u}}
\newcommand{\usup}{\underline{u}}
\begin{document}

\title[Filtered Schemes for Hamilton-Jacobi]
{Filtered schemes for {H}amilton-{J}acobi equations: a simple construction of convergent accurate difference schemes}

\author{Adam M. Oberman}
\thanks{Department of Mathematics and Statistics, McGill University, 805 Sherbrooke Street West, Montreal, Quebec, H3A 0G4, Canada ({\tt adam.oberman@mcgill.ca})}

\author{Tiago Salvador}
\thanks{Department of Mathematics and Statistics, McGill University, 805 Sherbrooke Street West, Montreal, Quebec, H3A 0G4, Canada ({\tt tiago.saldanhasalvador@mail.mcgill.ca})
}
\thanks{partially supported by FCT doctoral grant SFRH / BD / 84041 /2012
}

\begin{abstract}
We build a simple and general class of finite difference schemes for first order Hamilton-Jacobi (HJ) Partial Differential Equations. These filtered schemes are convergent to the unique viscosity solution of the equation.  The schemes are accurate: we implement second, third and fourth order accurate schemes in one dimension and second order accurate schemes in two dimensions, indicating how to build higher order ones. They are also explicit, which means they can be solved using the fast sweeping method.
The accuracy of the method is validated with computational results for the eikonal equation and other HJ equations in one and two dimensions, using filtered schemes made from standard centered differences, higher order upwinding and ENO interpolation.
\end{abstract}

\date{\today}

\subjclass[2000]{35J15, 35J25, 35J60, 35J96 65N06, 65N12, 65N22
}

\keywords{
Fully Nonlinear Elliptic Partial Differential Equations, Hamilton Jacobi Equations, Eikonal equation, Nonlinear Finite Difference Methods, Viscosity Solutions, Monotone Schemes, Upwind Schemes
}


\maketitle

\section{Introduction}\label{sec:intro}

In this work we build a simple and general class of finite difference schemes for first order Hamilton-Jacobi (HJ) Partial Differential Equations.  These filtered schemes are almost monotone (in a rigorous sense) and thus provably convergent to the unique viscosity solution of the equation. 
The schemes are formally accurate: we implement second, third and fourth order accurate schemes in one dimension and second order accurate schemes in two dimensions, indicating how to build higher order ones. They are also explicit, which means they can be solved using the fast sweeping method~\cites{FastSweeping,FastSweepingZhao}, or the fast marching method~\cites{FastMarching,TsitsiklisFastMarching} in the case of the eikonal equation.

There are already a large number of discretizations and solvers available for  Hamilton-Jacobi equations.   Our filtered schemes are designed to remain stable while allowing for a wide choice of accurate discretizations.
The simplest approximations are finite difference schemes based on a Cartesian grid.  In this class, monotone schemes are provably convergent \cite{BSnum}, but only first order accurate~\cite{ObermanDiffSchemes}.  
In general, higher order finite difference schemes for HJ equations are neither monotone, nor stable.  For example, the centered difference scheme is unstable for the eikonal equation \cite[Section 4.3]{SethianBook}. 

Higher order accurate schemes have been built, but only by giving up other desirable properties (e.g.\ ease of implementation, fast solvers, or the convergence proof). Semi-Lagrangian schemes \cites{falcone2002semi,cristiani2007fast}, are accurate, but they involve solving the characteristic ordinary differential equations, and are generally more complicated to implement. Central schemes \cite{LinTadmorHJ}  achieve second order accuracy, at the expense of a slightly more complicated, non-explicit formulation. The ENO and WENO schemes \cites{OsherShuENO,ShuHighOrder,JiangWENO} are accurate, and while not provably convergent, they are effective in practice. Combinations of WENO and central schemes have been implemented, achieving higher order accuracy~\cite{bryson2003high}. The ENO based schemes use adaptive  stencils, which complicates the use of fast solvers (however see \cite{ZhangHighOrderFastSweeping} for a sweeping method). Fast marching methods require specialized data structures to implement, are usually first order accurate (however see \cites{Ahmed} for higher order methods) and only apply to the eikonal equation.  A compact upwind second order scheme for the eikonal equation was  proposed in \cite{BenamouSongtingZhao}.   

A higher order scheme for Hamilton-Jacobi equations was presented by Abgrall in~\cite{Abgrall}.  Since this scheme uses some ideas similar to ours, we discuss it in further detail in the next paragraphs.

\subsection{Contribution of this work}\label{sec:contribution}  

We build filtered schemes by combining a stable, monotone scheme with an accurate (but possibly unstable) scheme. The accurate scheme is not required to be stable on its own: it can simply be standard higher order finite differences, or it can designed to take advantage of known properties of the solutions to the equation under consideration (for example a compact scheme could better avoid singularities in the solution). However, independently of the choice made, the combination of the two schemes is both provably convergent, and (potentially) higher order accurate.  We demonstrate that with a judicious choice of the accurate scheme, the higher order accuracy can be achieved.   In particular, using one-sided higher order finite differences for the accurate scheme, combined with an upwind monotone scheme results in a very simple, explicit, and accurate scheme for the eikonal equation.   We also treat more general cases.

The proof of convergence relies on the classical and well known Barles-Souganidis result \cite{BSnum}, which states that monotone, stable, consistent schemes converge.  In this paper, convergence of ``almost monotone'' schemes was mentioned as a remark, but no definition or examples were given.   It turns out that filtered schemes, the way we define them, fit very naturally into the framework of the proof, while also being general enough to allow for a variety of schemes.  
The recent (2009) paper by Abgrall~\cite{Abgrall} was the first paper to present a provably convergent scheme that blends a monotone scheme with an accurate scheme. The convergence of this scheme 
also follows from an adaptation of the Barles-Souganidis convergence proof.
The small (uniformly bounded) correction to the scheme due to the lack of monotonicity can be absorbed into the term usually seen as the consistency error. The idea of a filtered scheme is then to provide a systematic method to blend a monotone scheme with an accurate scheme and thereby allowing for higher order accuracy. Filtered schemes were previously introduced in  \cite{FroeseObermanFiltered} in the context of  the Monge-Amp\`ere equation.
There they were used to overcome the reduction in accuracy based on the wide-stencil monotone scheme.  However, the filtered schemes can be applied in a different context to  build higher order accurate schemes for the eikonal equation and for more general Hamilton-Jacobi equations.
 
The schemes we introduce have the following properties
\begin{enumerate}
\item They are simple and easy to implement on Cartesian grids. For example, for the eikonal equation the filtered scheme using the centered difference scheme, is convergent and second order accurate, which results in the simplest second order accurate difference scheme.
\item Higher order explicit schemes are obtained using higher order upwind interpolation. These higher order schemes can be solved using fast sweeping.  If desired, fast marching can be used  instead in the case of the eikonal equation.
\item Other choices of accurate schemes can be used instead: we implement ENO schemes for comparison.  Any choice of discretization (e.g. the popular discontinuous Galerkin method) can be used, provided  a monotone scheme can also be constructed in the same setting.
\item For the eikonal equation in one dimension, higher order convergence rates for the numerical  solution is proved, even for non-smooth solutions.
\item For HJ equations (in general), higher order convergence is obtained locally, in regions where the solution is smooth.
\end{enumerate}

\subsection{The eikonal equation}
We take a particular interest on the eikonal equation
\bq\begin{cases}\label{eikonal}
|\nabla u(x)| = f(x),	& \text{for $x$ outside }\Gamma,\\
u(x) = g(x),			& \text{for $x$ on }\Gamma.
\end{cases}
\eq
where $f > 0$ and $\Gamma$ is here a closed, bounded set. The eikonal equation has wide applications in geometric optics, computer vision, optimal control, etc. Moreover, as pointed out in \cite{BenamouSongtingZhao}, high order schemes are particularly important in the high frequency wave propagation where the eikonal equation is coupled to a transport equation through its gradient \cites{Qian99anadaptive,Symes94kirchhoffsimulation}.

\subsection{Hamilton-Jacobi equations}
We consider HJ equations of the form
\bq\label{HJequation}\begin{cases}
H(x,\nabla u) = f(x),	& x \in \Omega,\\
u(x) = g(x),			& x \in \Gamma,
\end{cases}
\eq
where $\nabla u$ is the gradient of the function $u$, $\Omega$ is an open set, $\Gamma$ is the boundary of $\Omega$ and the Hamiltonian $H$ is a nonlinear Lipschitz continuous function. HJ equations appear in many applications, such as optimal control, differential games, image processing, computer vision and geometric optics. We always refer to the eikonal equation specifically, even though it's in fact an HJ equation (take $H(p) = |p|$). When we refer to HJ equations we always have more general equations in mind.

In general, solutions are not smooth (or even differentiable) and so we consider viscosity solutions  (see Appendix \ref{appx:viscosity}, \cite{CIL} in general, \cite{Abgrall} in this context). The viscosity solutions can be piecewise smooth with a singularity in the gradient. It therefore makes sense to design high order schemes that provide higher order accuracy (at least) away from these singularities.

\subsection{The definition of the filtered scheme}
The filtered schemes are defined by the following.
Let $F^h_M$ denote the monotone discretization of the operator on the grid with spacing $h$, given  below in subsection~\ref{sec:monotone}. Let $F^h_A$ denote an accurate discretization of the same operator, with several possible choices being given below in subsection~\ref{sec:schemes}.

Then the filtered scheme, $F^h$, blends the two schemes together by using the following simple formula:
\bq\label{defnFiltered}
F^h[u] = \begin{cases}
F^h_A[u], & \text{ if } \left|F^h_A[u]-F^h_M[u]\right| \leq \sqrt{h}\\
F^h_M[u],   & \text{otherwise.}
\end{cases}
\eq
The filtered scheme, which is consistent provided both underlying schemes are consistent, is usually  not monotone.  However it is almost monotone, since, by definition, 
\bq\label{consistencyforfiltered}
F^h[u] = F^h_M[u] + \bO(h^{1/2}).
\eq
The proof in \cite{BSnum} can then be modified to include these schemes since the term of $\bO(h^{1/2})$ can be absorbed into the truncation error.

\begin{remark}\label{remark:rates}
The choice of the factor $\sqrt{h}$ in \eqref{defnFiltered} is designed to fit between two rates: large enough to permit the accurate scheme to be active where the solution is smooth, and small enough to force the monotone scheme to be active when the solution is singular.  So, for example, for the eikonal equation, the monotone scheme is accurate to $\bO(h)$, and the accurate scheme is $\bO(h^2)$ or better, so we can take the factor $\sqrt{h}$.  (If we took it to be $h$, we might fail to see the monotone scheme, and get something less stable).

Below, at the end of subsection \ref{sec:examples1D},  we consider an example where the Hamiltonian is non-convex, and the observed convergence rate is $\bO(\sqrt{h})$, for the monotone scheme, and so we take the factor to be smaller than $\sqrt{h}$.
\end{remark}
The following convergence theorem, in a more general setting, was proved in~\cite{FroeseObermanFiltered}.  For the convenience of the reader we include the proof, specialized to our case, in the Appendix \ref{appendix}.

\begin{theorem}[Convergence of Approximation Schemes]\label{thm:converge}
Let $u$ be the unique viscosity solution of \eqref{HJequation}.
For each $h>0$,  let $u^h$ be a stable solution of $F^h[u] = 0$, where the filtered scheme $F^h$ is given by ~\eqref{defnFiltered} and $F^h_M$ is consistent and monotone. Then 
\[
u^h \to u, \quad \text{ locally uniformly,  as } h \to 0.
\]
\end{theorem}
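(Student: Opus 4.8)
The plan is to follow the Barles--Souganidis method of half-relaxed limits \cite{BSnum}, adapted so as to absorb the $\bO(h^{1/2})$ error coming from the lack of monotonicity of $F^h$. Using the stability (uniform boundedness) of the family $\{u^h\}$, I would first define the upper and lower half-relaxed limits
\[
\overline{u}(x) = \limsup_{h\to 0,\, y\to x} u^h(y), \qquad \underline{u}(x) = \liminf_{h\to 0,\, y\to x} u^h(y),
\]
which are respectively upper and lower semicontinuous and satisfy $\underline{u} \le \overline{u}$ everywhere. The goal then reduces to showing that $\overline{u}$ is a viscosity subsolution and $\underline{u}$ a viscosity supersolution of \eqref{HJequation}: the comparison principle underlying the uniqueness of $u$ forces $\overline{u} \le \underline{u}$, hence $\overline{u} = \underline{u} =: u$, and equality of the two semicontinuous envelopes is precisely local uniform convergence $u^h \to u$.

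For the subsolution property, let $\phi$ be smooth and suppose $\overline{u} - \phi$ has a strict local maximum at an interior point $x_0$ with $\overline{u}(x_0) = \phi(x_0)$. A standard argument produces a sequence $h_n \to 0$ and grid points $x_n \to x_0$ at which $u^{h_n} - \phi$ attains a local maximum over the grid, with $u^{h_n}(x_n) \to \overline{u}(x_0)$. Local maximality gives $u^{h_n}(x_n) - u^{h_n}(y) \ge \phi(x_n) - \phi(y)$ for neighbouring grid points $y$, so the monotone dependence of $F^h_M$ on the differences $u(x)-u(y)$, together with translation invariance in $u$ (the equation has no zeroth-order term), yields
\[
F^{h_n}_M[\phi](x_n) \le F^{h_n}_M[u^{h_n}](x_n).
\]
Here the filtered structure enters: since $u^{h_n}$ solves $F^{h_n}[u^{h_n}] = 0$ and $\lvert F^{h_n}_M - F^{h_n}\rvert \le \sqrt{h_n}$ by the very definition \eqref{defnFiltered} and the almost-monotonicity \eqref{consistencyforfiltered}, we obtain $F^{h_n}_M[u^{h_n}](x_n) \le \sqrt{h_n}$. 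Combining and letting $n\to\infty$, consistency of $F^h_M$ gives $H(x_0,\nabla\phi(x_0)) - f(x_0) \le 0$, the desired subsolution inequality.

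The supersolution property for $\underline{u}$ is entirely symmetric: replace maxima by minima, reverse the monotonicity inequality, and use $F^{h_n}_M[u^{h_n}](x_n) \ge -\sqrt{h_n}$ to conclude $H(x_0,\nabla\phi(x_0)) - f(x_0) \ge 0$. With both envelopes identified as sub/supersolution, the comparison principle closes the argument as above, while the Dirichlet data on $\Gamma$ is treated in the same relaxed sense using consistency of the scheme up to the boundary.

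The crux — and the only place the filtered construction is genuinely used — is the absorption of the $\sqrt{h_n}$ term: because the threshold in \eqref{defnFiltered} is $\sqrt h$ rather than $\bO(1)$, the discrepancy between $F^h$ and the monotone $F^h_M$ vanishes in the limit, so the relaxed limits inherit exactly the inequalities forced by the monotone scheme. The main technical obstacle I expect is the standard but delicate construction of the grid maximizers $x_n \to x_0$ with $u^{h_n}(x_n)\to\overline{u}(x_0)$ (perturbing $\phi$ to make the extremum strict and extracting the appropriate subsequence), together with the careful handling of the boundary data near $\Gamma$ so that the comparison principle applies up to the boundary.
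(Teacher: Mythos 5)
Your proposal is correct and follows essentially the same route as the paper's appendix proof: Barles--Souganidis half-relaxed limits $\overline{u},\underline{u}$, with the filter's defining property $\left|F^h[u]-F^h_M[u]\right| \leq \sqrt{h}$ (i.e.\ \eqref{consistencyforfiltered}) used to transfer the monotone scheme's comparison inequality and consistency to the limit, and comparison closing the argument. The only cosmetic differences are that you apply monotonicity in difference form by invoking translation invariance of $F^h_M$, whereas the paper instead carries the constant $\varepsilon_n = u^{h_n}(y_n)-\phi(y_n)$ and uses the $\xi$-shift built into its consistency definition (thus avoiding any translation-invariance assumption), and the grid-maximizer construction you defer as ``standard but delicate'' is exactly the paper's Lemma~\ref{lemma:stabilitymaxima} on stability of maxima.
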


To apply the theorem, we do not need to know that solutions of the filtered scheme are unique.  However, we do need to know that stable solutions exist.  Existence of such solutions was proven in \cite{FroeseObermanFiltered} for a slightly different form of the filtered scheme.  Instead of \eqref{defnFiltered}, a continuous interpolation between the monotone and accurate scheme was used.   This was required for the continuity argument in the proof of existence, and it was also of practical use for a Newton solver.  

In our setting, although discontinuous, \eqref{defnFiltered} has a simpler form, which allows for explicit solution formulas below.  These explicit solution formulas allow us to build fast sweeping solvers, which are appropriate for Hamilton-Jacobi equations.  In practice the computational results are as good as could be expected.  For the purpose of the proof, a continuous filter is needed but the practical advantages of the discontinuous one outweigh the lack of rigor.

Theorem \ref{thm:converge} does not provide any information regarding the convergence rate. Proving higher order convergence requires additional efforts and is possible in specific settings. 
For the one-dimensional eikonal equation, we prove higher order convergence in subsection~\ref{sec:error}. For the two-dimensional eikonal equation, second and third order convergence is proven for smooth solutions in \cite{Ahmed}.  We are more interested in demonstrating the higher order convergence in practice, which is done using numerical simulations. In particular, in the case of piecewise smooth solutions in two dimensions, we achieve second order convergence rates in the smooth region, and first order convergence overall in the $l^\infty$ norm.

\begin{remark}  
In addition to stationary equations, we can build filtered schemes for time dependent equations.  This can be accomplished by using the filtered scheme on the spatial part of the operator, and a standard time discretization (forward Euler or strong stability preserving time discretizations) for the time derivative. As needed, the filter could also be applied to the time derivative term as well.
In this case, with minor modifications, the proof of convergence for the filtered scheme goes through, since, as it's standard for viscosity solution, the time derivative can be considered as an additional spatial variable.
\end{remark}


\section{Discretization and solvers}
\label{sec:compute}

In this section we will discuss the discretization of the monotone and  filtered schemes for both HJ and eikonal equations for different choices of the accurate schemes (centered, upwind and ENO). We do this both in one and two dimensions. We recall that our filtered schemes are given by \eqref{defnFiltered}. We should point out that all discretizations for HJ equations can be applied to the eikonal equation, although we choose to present and use specific discretizations for the eikonal equation given its importance in the literature.

We consider only the case of regular Cartesian grids since the discretization is simpler and the idea is clear. It is certainly possible to build filtered schemes using higher order methods on triangulated grids for example.

\subsection{Monotone schemes}\label{sec:monotone}


For the eikonal equation, in the one-dimensional case, the monotone scheme is given by
\bq\label{monotone1deikonal}
|u_x^h|^M = \max\left\{-\frac{u(x+h)-u(x)}{h},\frac{u(x)-u(x-h)}{h},0\right\}.
\eq
Since we are working on a Cartesian grid, extending it to the two dimensional case simply requires the use of the standard Euclidean 2-norm function $N:\R^2 \to \R$ given by
\bq\label{norm}
N(x,y) = \sqrt{x^2+y^2}.
\eq
We then define
\bq\label{monotone2deikonal}
|\nabla u^h|^M = N\left(|u_x^h|^M,|u_y^h|^M\right)
\eq
which is monotone,  as desired (see for example~\cite{ObermanDiffSchemes}).

There are several monotone numerical Hamiltonians we could use to discretize HJ equations. 
Here we choose to use  the Lax-Friedrichs numerical Hamiltonian \cite{LFHJ}, because it has a simple form and it can be used for both convex and nonconvex Hamiltonians:
\bq\label{monotone1dHJ}
H^h_{LF}[u](x) = H^h_{LF}(x,p^+,p^-) = H\left(x,\frac{p^++p^-}{2}\right)-\frac{1}{2}\sigma_x(p^+-p^-)
\eq
where $\sigma_x$ is the artificial viscosity satisfying $\sigma_x = \max \left|\frac{\partial H}{\partial p}\right|$, $p = u_x$ and $p^{\pm}$ are the corresponding forward and backward differences approximations of $u_x$.

The scheme easily generalizes into higher dimensions: in the two-dimensional case we have
\begin{align}\label{monotone2dHJ}
\begin{aligned}
H^h_{LF}[u](x,y)	& = H^h_{LF}(x,y,p^+,p^-,q^+,q^-)\\
					& = H\left(x,y,\frac{p^++p^-}{2},\frac{q^++q^-}{2}\right)-\sigma_x\frac{p^+-p^-}{2}-\sigma_y\frac{q^+-q^-}{2}
\end{aligned}
\end{align}
where $\sigma_y = \max \left|\frac{\partial H}{\partial q}\right|$, $q = u_y$ and $q^{\pm}$ are the corresponding forward and backward differences approximations of $u_y$.

\subsection{Accurate schemes}\label{sec:schemes}
We know that the filtered scheme will converge independently of the choice of the accurate scheme. Its purpose is to provide additional accuracy in the regions where the solution is smooth and where the accurate scheme is active. Thus the resulting  accuracy of the solution comes from a judicious choice of the accurate scheme. In addition to the accuracy, the choice of accurate scheme determines the type of solver we can use (iterative or sweeping), based on whether an explicit solution formula is available (see subsection \ref{sec:explicit}).

We first consider the one-dimensional case and then show how, as in the previous section, the schemes can be generalized for the two-dimensional case. 

\textbf{Centered Schemes:} The second order accurate centered scheme are obtained by simply replacing $u_x$ by its second order centered approximation:
\begin{align*}
& |u_x^h|^{C,2} = \frac{|u(x+h)-u(x-h)|}{2h},\\
& H^h_{C,2}[u](x) = H\left(x,\frac{u(x+h)-u(x-h)}{2h}\right).
\end{align*}

\textbf{Upwind Schemes:} The upwind schemes proposed here were first thought for the eikonal equation, although they can be generalized to HJ equations in general. In the eikonal equation case, they are designed to choose the finite difference stencil in terms of the direction of the characteristics of the solution. This means using the left (right) biased stencil if the characteristics are being propagated from the left (right). The higher order upwind schemes generalize the monotone scheme above. They are defined as follows.

Set $P^{\pm,n}[u]$ to be the interpolating polynomial of degree $n$ of $u$ at the nodes $x_j = x \pm jh$ for $j = 0,1,\ldots,n$. (The sign in the superscript indicates interpolation to the left or to the right.) These interpolating polynomials are standard and given in several convenient explicit forms (see \cite{Iserlesbook}). We give a specific example below. We then set
\begin{align*}
& |u_x^h|^{U,n} = \max\left\{\frac{d}{dx}P^{+,n}[u](x),-\frac{d}{dx}P^{-,n}[u](x)\right\}\\
& H^h_{U,n}[u](x) = H^h_{LF}\left(x,\frac{d}{dx}P^{+,n}[u](x),\frac{d}{dx}P^{-,n}[u](x)\right)
\end{align*}

\textbf{ENO Schemes:} High order essentially non-oscillatory (ENO) are another option for the accurate discretization. (A refinement of ENO is WENO \cite{JiangWENO}, which we choose not to implement, since the main idea is clear from the ENO examples.) The  idea underlying the ENO schemes is to do a standard interpolation using an adaptive stencil, i.e., the stencil used depends on the function being interpolated. Starting with two nodes, the ENO interpolation of order $n$ selects the remaining $n-1$ interpolation nodes by successively adding nodes to the stencil with the smallest Newton divided difference. This way,  the $r^{th}$ node is chosen by comparing two approximations of the derivative of order $r+1$, with $r$ taking successively the values $\{1,\ldots,n-1\}$. 
 
Let $E^{n,\pm\frac{1}{2}}[u]$ denote the ENO interpolation as explained above, and as defined in \cite{OsherShuENO}. Then we define the $n^{th}$-order accurate ENO scheme to be
\begin{align*}
& |u_x^h|^{E,n} = \max\left\{\frac{d}{dx}E^{n,\frac{1}{2}}[u](x),-\frac{d}{dx}E^{n,-\frac{1}{2}}[u](x)\right\}\\
& H^h_{E,n}[u](x) = H^h_{LF}\left(x,\frac{d}{dx}E^{n,\frac{1}{2}}[u](x),\frac{d}{dx}E^{n,-\frac{1}{2}}[u](x)\right)
\end{align*}

\textbf{Two dimensional schemes.} 
In the case of the eikonal equation we use \eqref{norm} as we did in subsection \ref{sec:monotone}. The second order centered scheme becomes
\bq
|\nabla u^h|^{C,2} = N\left(|u_x^h|^{C,2},|u_y^h|^{C,2}\right),
\eq
the upwind schemes become
\bq\label{upwind2d}
|\nabla u^h|^{U,n} = N\left(|u_x^h|^{U,n},|u_y^h|^{U,n}\right),
\eq
and, finally, the ENO schemes are defined as
\bq
|\nabla u^h|^{E,n} = N\left(|u_x^h|^{E,n},|u_y^h|^{E,n}\right).
\eq
The upwind schemes here defined for the eikonal equation recover the  $2^{nd}$ and $3^{rd}$ order upwind schemes from (\cite{FastMarching}, \cite{Chopp} and \cite{Ahmed}). These schemes have been solved using Fast Marching algorithms.

As for HJ equations, the extension to two dimensions follows from using the two-dimensional expression of $H^h_{LF}$ as we did with the monotone scheme.

\subsection{Explicit methods}\label{sec:explicit}
For upwind schemes, the interpolation is fixed, so we can solve for the reference variable and build explicit schemes. In contrast, it is difficult to directly build explicit methods for many of the other schemes.
Rather than present the general method for solving for the reference variable and in order to be concrete and save space, we give a specific example below. The general method should then be clear.

\subsubsection*{Eikonal equations}
\begin{example}[one-dimensional case]
Consider first the monotone scheme in the one-dimensional case \eqref{monotone1deikonal}. Solving the equation $|u_x^h|^M = f$ for the reference variable, $u(x)$, leads to
\bq\label{Emon}
u(x) = \min\{u(x+h),u(x-h)\}+hf(x).
\eq
Consider now the second order upwind scheme, again in one dimension. The upwind scheme takes the form 
\[
|\nabla u_x^h|^{U,2} \equiv \frac{1}{2h}\max\left\{3u(x)-4u(x\pm h)+u(x \pm 2h)\right\} = f.
\]
Solving the preceding equation  for the reference variable, $u(x)$,  leads to
\bq\label{Eup}
u(x) = \frac{1}{3}\min\{4u(x+h)-u(x+2h),4u(x-h)-u(x-2h)\}+\frac{2}{3}hf(x).
\eq

Finally, consider the correspondent filtered scheme. Combining~\eqref{Emon} and~\eqref{Eup} and using the definition of the filtered scheme~\eqref{defnFiltered} we obtain the following explicit representation of the solution of the filtered scheme at a reference point in terms of the neighboring values
\[
u(x) = \begin{cases}
\frac{1}{3}\min\{4u(x\pm h)-u(x\pm 2h)\}+\frac{2}{3}hf(x) & \text{if } \left||u_x^h|^A-|u_x^h|^M\right| \leq \sqrt{h},\\
\min\{u(x+h),u(x-h)\}+hf(x) & \text{otherwise.}
\end{cases}
\]
\end{example}

\begin{example}[two-dimensional case]
We can also obtain an explicit solution for the filtered schemes using the upwind scheme in the two-dimensional case as above. In this case solving for the reference variable $u(x,y)$ requires  solving a nonlinear equation  of the form 
\[
\left[(z-a)^+\right]^2+\left[(z-b)^+\right]^2 = c^2
\]
for the unknown $z$ where $a$, $b$ and $c > 0$ are constants and $(z)^+:=\max\{z,0\}$. This equation combines  piecewise linear functions with a quadratic function. The unique solution of the equation is given by
\bq\label{explicitgeneric2d}
z = \begin{cases}
\min\{a,b\}+c						& |a-b| \geq c,\\
\frac{a+b+\sqrt{2c^2-(a-b)^2}}{2}	& |a-b| < c,
\end{cases}
\eq
(see e.g.\ \cite{FastSweepingZhao} for a  derivation).

In the case of the monotone scheme we get
\[\begin{cases}
a = \min\{u(x+h,y),u(x-h,y)\},\\
b = \min\{u(x,y+h),u(x,y-h)\},\\
c = h f(x).
\end{cases}\]
As for the second order upwind scheme we have
\[\begin{cases}
a = \frac{1}{3}\min\{4u(x\pm h,y)-u(x\pm 2h,y)\},\\
b = \frac{1}{3}\min\{4u(x,y\pm h)-u(x,y\pm 2h)\},\\
c = \frac{2}{3}h f(x).
\end{cases}\]

The explicit formula of the filtered scheme can then be obtained as in the one-dimensional case using the definition of filtered scheme \eqref{defnFiltered} and \eqref{explicitgeneric2d}.
\end{example}

\subsubsection*{Hamilton-Jacobi equations}
\begin{example}[one-dimensional case]
Consider first the monotone scheme \eqref{monotone1dHJ}. We know that
\[p^+ = \frac{u(x+h)-u(x-h)}{h}, \quad p^- = \frac{u(x)-u(x-h)}{h}.\]
Thus, solving $H^h_{LF}[u] = f$ for the reference variable, $u(x)$, leads to
\[u(x) = \frac{1}{\sigma_x}\left[f(x)-H\left(x,\frac{u(x+h)-u(x-h)}{2h}\right)+\sigma_x\frac{u(x+h)+u(x-h)}{2h}\right].\]

Consider now the second order upwind scheme. We have
\begin{align*}
\frac{d}{dx}P^{+,2}[u](x)	& = \frac{-3u(x)+4u(x+h)-u(x+2h)}{2h},\\
\frac{d}{dx}P^{-,2}[u](x)	& = \frac{3u(x)-4u(x-h)+u(x-2h)}{2h}.
\end{align*}
Thus solving $H^h_{U,2}[u]=f$ for the reference variable, $u(x)$, leads to
\begin{align*}
u(x) = \frac{2}{3\sigma_x}	& \left[f(x)-H\left(x,\frac{-u(x+2h)+4u(x+h)-4u(x-h)+u(x-2h)}{4h}\right)\right.\\
							& \left.+\sigma_x\frac{-u(x+2h)+4u(x+h)+4u(x-h)-u(x-2h)}{4h}\right].
\end{align*}

The explicit formula of the filtered scheme can then be obtained as in the eikonal equation case using the definition of filtered scheme \eqref{defnFiltered}.
\end{example}

For the ENO schemes, we can't get an explicit formula. However, it's possible to get a fixed point iteration which has been used successfully with a fast sweeping solver in \cite{ZhangHighOrderFastSweeping}.

\subsection{Solution methods}\label{sec:solver}

The simplest solver is to use the fixed point iteration
\bq\label{FPsolver}
u^{n+1} = u^n - \alpha  (F^h[u]-f)
\eq
which corresponds to the discrete version of the parabolic equation
$u_t = - F[u] + f$
using a forward Euler step, where $F[u] = |\nabla u|$ or $F[u](x) = H(x,\nabla u)$. The fixed point iteration will be a contraction in the $l^\infty$ norm provided that we choose $\alpha$ small enough as dictated by the nonlinear CFL condition \cite{ObermanDiffSchemes}, which in the eikonal equation case means $\alpha = \bO(h)$. This will however make the solver relatively slow.

As seen in the previous section, we have explicit formulas for the upwind filtered schemes.  This allows us to use the fast sweeping method \cites{FastSweeping,FastSweepingZhao}, which is a fast iterative solution method.
Each node is updated using Gauss-Seidel iterations with alternating sweeping ordering of the domain. 
This allows information to propagate from $\Gamma$ along characteristics  to the  rest of the computational domain.
In the case of the eikonal equations, an alternative would be the Fast Marching Method \cites{FastMarching,TsitsiklisFastMarching}: the solution is constructed by using characteristic information to select the next node where the solution can be obtained.  However this requires a complicated data structure  which makes it more difficult to implement. In one dimension, the whole domain is swept with two alternating ordering of the nodes
\begin{itemize}
	\item $(i=1, \dots, N)$ and $(i=N, \dots, 1)$
\end{itemize}
which correspond to the two possible directions for the propagation of the characteristics. In two dimensions we sweep the whole domain with eight alternating ordering of the nodes
\begin{itemize}
	\item $(i=1,\dots,N,~ j=1,\dots,N)$,
	\item $(i=1,\dots,N, j=N,\dots,1)$,
	\item  \dots
	\item $(j=N,\dots,1, i=N,\dots,1)$.
\end{itemize}
corresponding respectively to up-right, up-left, down-left, down-right, right-up, left-up, left-down and right-down. Here, the first (last) four orderings help the convergence when the characteristics are aligned with the $x$-axis ($y$-axis). 

For the filtered centered and ENO schemes, we implemented the fixed point solver \eqref{FPsolver}. For the upwind filtered schemes we implemented the fast sweeping solver described above.

\subsection{Error estimates in one dimension}\label{sec:error}
In this section, we focus on the eikonal equation in one dimension, with Dirichlet boundary conditions on the endpoints of an interval. Despite the fact that the solution is Lispchitz continuous, we are able to prove, when the data $f$ is smooth enough, that the upwind schemes converge to higher order.
 This is a consequence of the fact that (i) the solution is piecewise smooth, and we can express it as a minimum  of the two ODE solutions (ii) the numerical solution is also expressed as the minimum of the left and right branches. A similar idea was used to obtain higher accuracy for conservation laws in \cite{1306.0532}.

Here we prove the higher order convergence of a particular scheme: the (unfiltered) high order upwind schemes. In this case we do not prove convergence of the filtered scheme which combined the high order upwind scheme with the monotone upwind scheme. However, we implement the filtered scheme, and we found, in practice, for the computed solution, the higher order scheme is always active.   

\begin{remark} The reason for using the filtered scheme is that it provides global stability: intermediate numerical are stable, even though in the final computed solution the accurate scheme is always active.  To use a simile, the filtered scheme acts like training wheels on a bicycle, maintaining stability even though, ultimately the training wheels do not touch the ground. \end{remark}

We consider $u$ to be the viscosity solution of the one-dimensional eikonal equation
\bq\label{u1d}
\begin{cases}
|u^\prime| = f(x), & x \in (a,b),\\
u(x) = g(x), & x \in \Gamma = \{a,b\}.
\end{cases}
\eq

To start we need first to recall the known Dynamic Programming Principle (DPP).

\begin{proposition}
Consider the dynamics
\[
\begin{cases}
\dot{y}(t) = \alpha(t) & t \in (0,+\infty),\\
y(0) = x,
\end{cases}
\]
and cost functional
\[J_x(\alpha(\cdot)) = \int_0^{t_x(\alpha)}f(y_x(s;\alpha)ds + g(y_x(t_x(\alpha),\alpha)),
\]
where $\mathcal{A} = \left\{\alpha(\cdot):[0,+\infty)\to\{-1,1\}\subset \R,  \text{ measurable}\right\}$ and $t_x$ denotes the entry time in $\Gamma$. Hence $u$ is the value function of a minimum cost problem, being given by
\bq\label{DPP}
u(x) = \inf_{\alpha \in \mathcal{A}} J_x(\alpha(\cdot)).
\eq
\end{proposition}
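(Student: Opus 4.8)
The plan is to identify the value function $V(x):=\inf_{\alpha\in\mathcal{A}}J_x(\alpha(\cdot))$ with the unique viscosity solution $u$ of \eqref{u1d}; this is exactly the assertion \eqref{DPP}. I would follow the standard optimal-control route: establish the Dynamic Programming Principle for $V$, deduce from it that $V$ is a viscosity solution of $|V'|=f$ with the prescribed boundary data, and then invoke uniqueness to conclude $V=u$.

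For the DPP, I would fix $x\in(a,b)$ and a time $\tau$ smaller than the exit time and prove
\[
V(x)=\inf_{\alpha\in\mathcal{A}}\left\{\int_0^\tau f(y_x(s;\alpha))\,ds+V(y_x(\tau;\alpha))\right\}.
\]
The inequality ``$\le$'' comes from concatenating an arbitrary control on $[0,\tau]$ with a near-optimal control issued from $y_x(\tau;\alpha)$, and ``$\ge$'' from restricting an almost-optimal trajectory to $[0,\tau]$ and using optimality of its tail. Because the control set is the two-point set $\{-1,1\}$ and $\dot y=\alpha$ integrates trivially, the measurable-selection technicalities that normally burden this step are essentially absent; one only needs that trajectories exit in finite time, which holds since $f\ge f_{\min}>0$ on the compact interval forces any non-exiting control to have infinite cost.

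From the DPP I would pass to the two viscosity inequalities at an interior $x_0$. Writing $y_x(\tau;\alpha)=x_0+\alpha\tau+o(\tau)$ and $\int_0^\tau f=f(x_0)\tau+o(\tau)$, a smooth test function $\phi$ touching $V$ from above yields $-\alpha\,\phi'(x_0)\le f(x_0)$ for both $\alpha=\pm1$, i.e.\ $|\phi'(x_0)|\le f(x_0)$ (subsolution); a test function touching from below, combined with an $\varepsilon$-optimal control, yields $|\phi'(x_0)|\ge f(x_0)$ (supersolution). The boundary conditions are immediate: for $x\in\Gamma$ the entry time $t_x$ is zero, so $J_x(\alpha)=g(x)$ for every control and $V=g$ on $\Gamma$ --- provided the data satisfies the compatibility condition $|g(a)-g(b)|\le\int_a^b f$, without which the Dirichlet condition is attained only in the relaxed viscosity sense. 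Uniqueness of the continuous viscosity solution of \eqref{u1d} then gives $V=u$.

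I expect the main obstacle to be the supersolution half of the passage together with the sign bookkeeping that guarantees the correct corner orientation: one must check that $H(x,p)=|p|-f(x)$ enters with the orientation for which the minimization selects the admissible solution. Concretely this means $V=\min\{u_L,u_R\}$ with $u_L(x)=g(a)+\int_a^x f$ and $u_R(x)=g(b)+\int_x^b f$, whose crossing point is a concave corner where $D^-V=\emptyset$ and $D^+V=[-f(x_0),f(x_0)]$ --- precisely the viscosity solution, whereas the maximum would fail the supersolution test. In fact this explicit ``minimum of the two ODE branches'' formula, which also underpins the one-dimensional error analysis of subsection~\ref{sec:error}, can be proved directly and more cheaply: since $f>0$ and $|\dot y|\equiv1$, the occupation-measure identity $\int_0^{t_x}f(y(s))\,ds=\int f(\xi)\,N(\xi)\,d\xi$ (with $N$ the crossing-number of the trajectory) shows any backtracking strictly increases the cost, so optimal trajectories are monotone and run directly to one endpoint, bypassing the abstract DPP argument altogether.
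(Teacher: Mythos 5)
Your proposal is correct, and its main line coincides with the paper's proof, which consists entirely of the citation \cite[Chapter IV]{BardiDolcettaOCHJBE}: what that reference provides for exit-time problems is exactly your chain of dynamic programming principle, viscosity characterization of the value function, and identification with $u$ via the comparison principle. Two of your additions go beyond the paper and are worth recording. First, the compatibility condition $|g(a)-g(b)|\le \int_a^b f$ is a genuine implicit hypothesis: if it fails, the value function is discontinuous at one endpoint and \eqref{u1d} admits no continuous viscosity solution attaining the Dirichlet data pointwise, a caveat the paper never mentions. Second, your closing shortcut --- since $f>0$, any backtracking strictly increases cost, so only the two constant controls matter --- is not so much an alternative proof of this proposition as it is the paper's own one-line proof of the next one, the representation \eqref{uab}; if you use it to bypass the DPP, you must still verify (as your $D^{\pm}$ corner analysis does) that $\min\{u_a,u_b\}$ is a viscosity solution and then invoke uniqueness to identify it with $u$, so some comparison-principle input remains unavoidable. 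With that accounting, both of your routes are sound: the abstract one is the cited standard argument, and the direct one proves this proposition and the next simultaneously.
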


\begin{proof}
See \cite[Chapter IV]{BardiDolcettaOCHJBE}.
\end{proof}

We are now able to express $u$ as the minimum of two ODE solutions.

\begin{proposition}
The viscosity solution $u$ of \eqref{u1d} is given by
\bq\label{uab}
u(x):=\min\{u_a(x),u_b(x)\},
\eq
where $u_a$ and $u_b$ are respectively the solution of
\bq\label{ODEs}
\begin{cases}
u^\prime = f(x),\\
u(a) = g(a),
\end{cases}
\text{ and }
\begin{cases}
-u^\prime = f(x),\\
u(b) = g(b).
\end{cases}
\eq
\end{proposition}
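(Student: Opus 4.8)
The plan is to start directly from the Dynamic Programming Principle \eqref{DPP}, which already identifies the viscosity solution $u$ with the value function $\inf_{\alpha\in\mathcal{A}} J_x(\alpha)$, and to compute this value function explicitly. First I would record the two ODE solutions in closed form: integrating \eqref{ODEs} gives $u_a(x) = g(a) + \int_a^x f(\xi)\,d\xi$ and $u_b(x) = g(b) + \int_x^b f(\xi)\,d\xi$. The identity \eqref{uab} will then follow from two matching inequalities, $u(x) \le \min\{u_a(x),u_b(x)\}$ and $u(x) \ge \min\{u_a(x),u_b(x)\}$.

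For the upper bound I would simply test \eqref{DPP} against the two constant controls $\alpha \equiv -1$ and $\alpha \equiv +1$. The first produces the trajectory $y(s) = x - s$, which reaches $a$ at the finite time $t_x = x-a$ with running cost $\int_0^{x-a} f(x-s)\,ds = \int_a^x f = u_a(x) - g(a)$, so that $J_x(\alpha) = u_a(x)$; the second produces $y(s) = x+s$, reaching $b$ and giving $J_x(\alpha) = u_b(x)$. Since $u$ is the infimum over all admissible controls, $u(x) \le u_a(x)$ and $u(x) \le u_b(x)$, hence $u(x)\le\min\{u_a(x),u_b(x)\}$.

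The substance is the lower bound, i.e. showing that no control beats the two direct paths. Given an arbitrary $\alpha$ with finite cost, its trajectory first meets $\Gamma = \{a,b\}$ at some finite $t_x$ (finiteness is forced by $f > 0$: a trajectory that never exits remains in $[a,b]$ and accrues infinite running cost, so it cannot be a minimizer). Suppose $y(t_x) = a$. Composing the $C^1$ function $u_a$ with the absolutely continuous trajectory and using $\dot y = \alpha$ gives $\frac{d}{ds}u_a(y(s)) = f(y(s))\,\alpha(s)$ almost everywhere, so integrating over $[0,t_x]$ yields $g(a) - u_a(x) = \int_0^{t_x} f(y(s))\alpha(s)\,ds$. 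Because $f > 0$ and $-\alpha(s) \le |\alpha(s)| = 1$, the running cost satisfies $\int_0^{t_x} f(y(s))\,ds \ge \int_0^{t_x} f(y(s))(-\alpha(s))\,ds = u_a(x) - g(a)$, whence $J_x(\alpha) \ge u_a(x) \ge \min\{u_a(x),u_b(x)\}$. The case $y(t_x) = b$ is symmetric, using $u_b$ (which satisfies $\frac{d}{ds}u_b(y(s)) = -f(y(s))\alpha(s)$) together with $\alpha(s) \le 1$. Taking the infimum over $\alpha$ gives the lower bound, and combining the two inequalities proves \eqref{uab}.

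The main obstacle is precisely this lower bound: it is the step where one must argue that a wiggling trajectory can only increase the weighted path length. The key device is to compose the candidate solution with the trajectory and apply the fundamental theorem of calculus, converting the geometric shortest-path intuition into the pointwise sign condition $f(-\alpha)\le f$ (resp. $f\alpha\le f$). I would need to be slightly careful about regularity — absolute continuity of $y$ and $C^1$ smoothness of $u_a,u_b$ (which holds as soon as $f$ is continuous) justify the chain rule almost everywhere — and about excluding non-exiting or infinitely oscillating controls from the infimum, both of which are handled by the strict positivity of $f$.
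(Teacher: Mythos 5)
Your proof is correct, and it follows the same basic route as the paper: both arguments start from the Dynamic Programming Principle and identify $u_a$ and $u_b$ as the costs of the two constant controls $\alpha\equiv -1$ and $\alpha\equiv +1$. The difference is one of rigor. The paper disposes of all other controls with the single assertion that, since $f>0$, the only trajectories to be considered in the minimum are the ones that travel straight to the endpoints, and offers no justification for this. You actually prove it: your lower bound, obtained by composing $u_a$ (resp.\ $u_b$) with an arbitrary admissible trajectory, applying the a.e.\ chain rule along the absolutely continuous path, and using $f>0$ together with $-\alpha(s)\le 1$ (resp.\ $\alpha(s)\le 1$) to bound the running cost below by $u_a(x)-g(a)$ (resp.\ $u_b(x)-g(b)$), is exactly the verification argument that the paper's phrase sweeps under the rug. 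You also correctly handle the controls the paper silently ignores, namely non-exiting ones, by observing that strict positivity (hence a uniform lower bound $f\ge m>0$ on the compact interval) forces their cost to be infinite. So your write-up is best described as a completion of the paper's sketch rather than a different proof: the paper's version is shorter and appeals to optimal-control intuition, while yours supplies an honest proof of the only nontrivial inequality and makes explicit the regularity points (absolute continuity of the trajectory, $C^1$ smoothness of $u_a,u_b$) on which that inequality rests.
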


\begin{proof}
Since $f>0$, the only trajectories to be consider in the minimum of \eqref{DPP} are the ones that travel straight to the endpoints $a$ and $b$. These trajectories are given by the controls $\alpha_1 \equiv -1$ and $\alpha_2 \equiv 1$, respectively. Hence
\[u(x) = \min \left\{J_x(\alpha_1(\cdot)),J_x(\alpha_2(\cdot))\right\}.\]
It's now easy to see that $u_a(x) =J_x(\alpha_1(\cdot))$ and $u_b(x) =J_x(\alpha_2(\cdot))$ and so we are done.
\end{proof}

We can now prove our result.
\begin{theorem}\label{convergence1D}
For $n \leq 6$ and if $f \in C^{(n+1)}[a,b]$ the upwind schemes are convergent. Moreover, if the solution is denoted by $u^{h,n}$, we have the following error estimate
\bq
|u^{h,n}(a+jh)-u(a+jh)| \leq Ch^nM_{n+1}
\eq
for $j = 0,\ldots,\frac{b-a}{h}$, where $C$ is a constant depending on $n$, the Lipschitz constant of $f$, $a$ and $b$ and $M_n = \max_{x\in[a,b]}|f(x)|$.
\end{theorem}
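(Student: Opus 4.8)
The plan is to exploit the two representations already in hand: the exact solution as a minimum of two ODE solutions, $u=\min\{u_a,u_b\}$ (the preceding Proposition), together with an analogous decoupling of the scheme. First I would observe that the one-dimensional upwind scheme decouples. Because the operator is the maximum of a purely backward-biased difference and a purely forward-biased difference, and because $f>0$ forces the two exact branches to cross exactly once ($u_a$ is strictly increasing with $u_a'=f\ge f_{\min}>0$ and $u_b$ strictly decreasing), the fast-sweeping solution can be written as $u^{h,n}=\min\{u_a^{h,n},u_b^{h,n}\}$. Here $u_a^{h,n}$ is the solution of the one-sided scheme $\tfrac{d}{dx}P^{-,n}[u](x)=f(x)$ marched rightward from $a$, and $u_b^{h,n}$ the solution of the companion forward scheme marched leftward from $b$. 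Granting this, the error reduces, via the elementary inequality $|\min\{p,q\}-\min\{r,s\}|\le\max\{|p-r|,|q-s|\}$, to estimating each branch separately:
\[
|u^{h,n}(x_j)-u(x_j)|\le\max\{\,|u_a^{h,n}(x_j)-u_a(x_j)|,\ |u_b^{h,n}(x_j)-u_b(x_j)|\,\}.
\]

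Next I would analyze a single branch, say $u_a^{h,n}$, as a linear recurrence. The equation $\tfrac{d}{dx}P^{-,n}[u](x_j)=f(x_j)$ is exactly the order-$n$ backward differentiation formula applied to the (linear) ODE $u'=f$; solving for the reference value expresses $u_a^{h,n}(x_j)$ through $u_a^{h,n}(x_{j-1}),\dots,u_a^{h,n}(x_{j-n})$. Since $u_a'=f\in C^{n+1}$ we have $u_a\in C^{n+2}$, so the local truncation error of this $(n+1)$-point formula is $\tau_j=\bO(h^n)$, controlled by the top derivative of $f$ (the quantity $M_{n+1}$ of the statement). The $n$ starting values are not an obstacle: every derivative of $u_a$ at the boundary is known, $u_a^{(k)}(a)=f^{(k-1)}(a)$, so a degree-$n$ boundary Taylor polynomial supplies starting data with error $\bO(h^{n+1})$. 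Subtracting the exact and discrete relations gives a linear recurrence $\sum_{k=0}^n\beta_k e_{j-k}=-h\tau_j$ for the branch error $e_j$.

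The key step, and the precise reason for the hypothesis $n\le 6$, is the stability of this recurrence. Its characteristic polynomial $\rho(\zeta)=\sum_{k=0}^n\beta_k\zeta^{\,n-k}$ is exactly the first characteristic polynomial of the BDF method of order $n$, and it satisfies the root condition (all roots in the closed unit disk, those on the unit circle simple) if and only if $n\le 6$. This is the classical Dahlquist zero-stability barrier for backward differentiation formulas. For $n\le 6$ the recurrence is therefore zero-stable, and the standard convergence theorem for consistent, zero-stable linear multistep methods (a discrete Gronwall estimate over the $\bO((b-a)/h)$ steps) yields $\max_j|e_j|\le C h^n M_{n+1}$, with $C$ depending only on $n$, on $b-a$, and on the stability constant of $\rho$. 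The same argument applies verbatim to $u_b^{h,n}$, and combining with the reduction of the first paragraph gives the claimed estimate.

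I expect the main obstacle to be rigorously justifying the decoupling $u^{h,n}=\min\{u_a^{h,n},u_b^{h,n}\}$. Because the high-order scheme is \emph{not} monotone, one cannot invoke a discrete comparison principle; instead one must show that the set of nodes where the left branch is selected is a left subinterval $\{0,\dots,m\}$ and the right branch is selected on the complementary right subinterval. This discrete single-crossing property is inherited from the strict, sign-definite slopes of the exact branches: since $u_a-u_b$ has derivative $2f\ge 2f_{\min}>0$, for $h$ small enough the $\bO(h^n)$-perturbed discrete branches still cross only once, so that wherever a branch wins its entire $n$-point one-sided stencil lies in the region where it wins; there the corresponding one-sided operator evaluates to $f$ while the opposite operator stays $\le f$, confirming that the pointwise minimum solves the full scheme. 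Making this quantitative — controlling the crossing location in terms of $f_{\min}$ and the Lipschitz constant of $f$ — is where the remaining work lies, and it accounts for the dependence of $C$ on the Lipschitz constant of $f$ recorded in the statement.
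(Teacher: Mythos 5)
Your proposal takes essentially the same route as the paper's own proof: write the viscosity solution as $u=\min\{u_a,u_b\}$, identify the discrete upwind solution with $\min\{u_a^{h,n},u_b^{h,n}\}$ where each branch solves the order-$n$ backward differentiation (BDF) discretization of the corresponding linear ODE, and then import the convergence theory of consistent, zero-stable multistep methods, which is exactly where the restriction $n\le 6$ comes from. If anything you are more explicit than the paper, which justifies the decoupling identity $u^{h,n}=\min\{u_a^{h,n},u_b^{h,n}\}$ only by matching explicit formulas in the case $n=2$ (leaving the single-crossing argument you flag as the main obstacle implicit) and cites the ODE literature without naming the BDF zero-stability barrier.
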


\begin{proof}
The idea of the proof consists in solving \eqref{ODEs} with backward difference schemes and realize using \eqref{uab} that we recover $u^{h,n}$, more precisely, the explicit formulas for upwind schemes discussed in subsection \ref{sec:explicit}.

Let $u^{h,n}_a$ and $u^{h,n}_b$ denote respectively the solutions obtained using backward schemes to solve \eqref{ODEs}. Hence they are the solution of
\[
\begin{cases}
U^{-,n}[u](x) = f(x)\\
u(a+jh) \text{ given for } j = 0,\ldots,n-1,
\end{cases}
\quad
\begin{cases}
-U^{+,n}[u](x) = f(x)\\
u(b-jh) \text{ given for } j = 0,\ldots,n-1.
\end{cases}
\]
Set $\tilde{u}^{h,n}(x) := \min\{u^{h,n}_a,u^{h,n}_b\}$. Under our assumptions we know that $u^{h,n}_a$ and $u^{h,n}_b$ converge respectively to $u_a$ and $u_b$ (see \cite{Quarteronietal} on multistep methods). Therefore the proof is done if we show that $\tilde{u}^{h,n}(x) = u^{h,n}(x)$.

Rather than prove this for all $n$, we give a particular example ($n=2$) and the general case should then follow easily. We will use the second order backward differentiation schemes and will therefore recover the second order upwind schemes. We have that $u_a^{h,2}$ is the solution of
\[
\frac{3u(x)-4u(x-h)+u(x-2h)}{2h} = f(x)
\]
and can therefore be written as
\[u_a^{h,2}(x) = \frac{1}{3}(4u(x-h)-u(x-2h)) + \frac{2h}{3}f(x)\]
Likewise, $u_b^{h,2}$ is the solution of
\[
-\frac{-3u(x)+4u(x+h)-u(x+2h)}{2h} = f(x)
\]
and so
\[
u_b^{h,2}(x) = \frac{1}{3}(4u(x+h)-u(x+2h)) + \frac{2h}{3}f(x)
\]
Using now \eqref{uab}, we recover \eqref{Eup} as desired.

Thus the accuracy of the numerical solution of~\eqref{u1d} is determined by the accuracy of the numerical solution of each of the two linear odes~\eqref{ODEs}.

The error estimates result naturally from the error estimates for backward difference schemes for ODEs which can be found in \cite{Quarteronietal}.
\end{proof}


\begin{remark}
The requirement $f \in C^{(n+1)}[a,b]$ is needed to obtain the order of convergence. This requirement can be relaxed to $f$ being piecewise $C^{(n+1)}$ in the same regions as the solution $u$. The idea is that we only need $u^{h,n}_a$ and $u^{h,n}_b$ to be high order convergent when they are active in the minimum of \eqref{uab}.
\end{remark}

\begin{remark}
Here we assume the exact solution is known near the boundary, but this assumption 	can be relaxed. The same order of accuracy can be obtained provided the boundary conditions are known to sufficient precision near the boundary, i.e., with the same of order of accuracy. Furthermore, these can be computed from the boundary data using standard methods.
\end{remark}

\subsection{Boundary conditions}\label{sec:boundary}
In this section we discuss the treatment of boundary conditions for the filtered scheme.

First we discuss the one dimensional case. Note that we solved the \emph{internal} problem and so the Dirichlet data is prescribed on the boundary of the computational domain. For the monotone difference method this leads to a standard application of Dirichlet boundary conditions. 

For higher order accurate methods, the situation is similar to the case of multistep methods for ordinary differential equations: more information is needed to achieve the higher accuracy.  This information can take the form of additional function values at adjacent grid points, or higher derivative information.  For practical considerations, in order to test the accuracy of the solution without introducing errors from the boundary, we extend the Dirichlet data to more grid points. More precisely, we set the exact solution (in fact, an $n$th order approximation of the exact solution is enough) at the $n$ grid points adjacent to the boundary when using the $n^{th}$ order upwind and ENO filtered schemes. Alternately, we could have used derivative information at the boundary. 

If the additional information is not available we may lose the higher accuracy. Using just the first order accurate monotone scheme reduces the order of the global accuracy. Similarly, using only the available one sided higher order approximations may decrease the accuracy since the available direction is not the one we are interested in: as we saw in the proof of Theorem \ref{convergence1D} in subsection \ref{sec:error} for the eikonal equation case, we want to interpolate towards the boundary and not away from it.

In the two-dimensional case, for the eikonal equation, we solved the \emph{external} problem and so the boundary of the computational domain did not include the Dirichlet boundary.   This poses an additional difficulty since the schemes need to be carefully defined near the boundary of the computational domain to prevent computational errors that propagate into the computational domain.  For the boundary of the computational domain we dealt as is usually done for monotone schemes: we consider only the one sided differences available.  Since the characteristics go inward, the lack of external information is not a problem here.
For the (internal) Dirichlet boundary, we proceed in the same way we did in the one-dimensional case: we set the exact solution at as many adjacent grid points of the boundary as needed depending on the order of accuracy of the scheme used. 

For general Hamilton-Jacobi equations, the computational boundary can cause problems, depending on the discretization used. For the Godunov scheme, which reduces to~\eqref{monotone2deikonal} in the case of the eikonal equation, there are no problems, so this is what we used for the eikonal equation.  However, for general Hamilton-Jacobi equations in two dimensions using the Lax-Friedrichs schemes~\eqref{monotone2dHJ} with high order interpolation is more complicated~\cite{ZhangHighOrderFastSweeping}, and can lead to errors at the computational boundary.

\section{Computational Results}
\subsection{Example solutions in one dimension}\label{sec:examples1D}
In this subsection we discuss the examples considered in one dimension. In all of them the solution is piecewise smooth with a single singularity. Their purpose is confirm the improved accuracy of the filtered schemes, as well as the high order convergence of the upwind schemes for the eikonal equation. All examples are displayed in Figure \ref{fig:ExProfiles}.

\begin{figure}[htdp]
\centering
\begin{tabular}{ccc}
\includegraphics[width=0.3\textwidth]{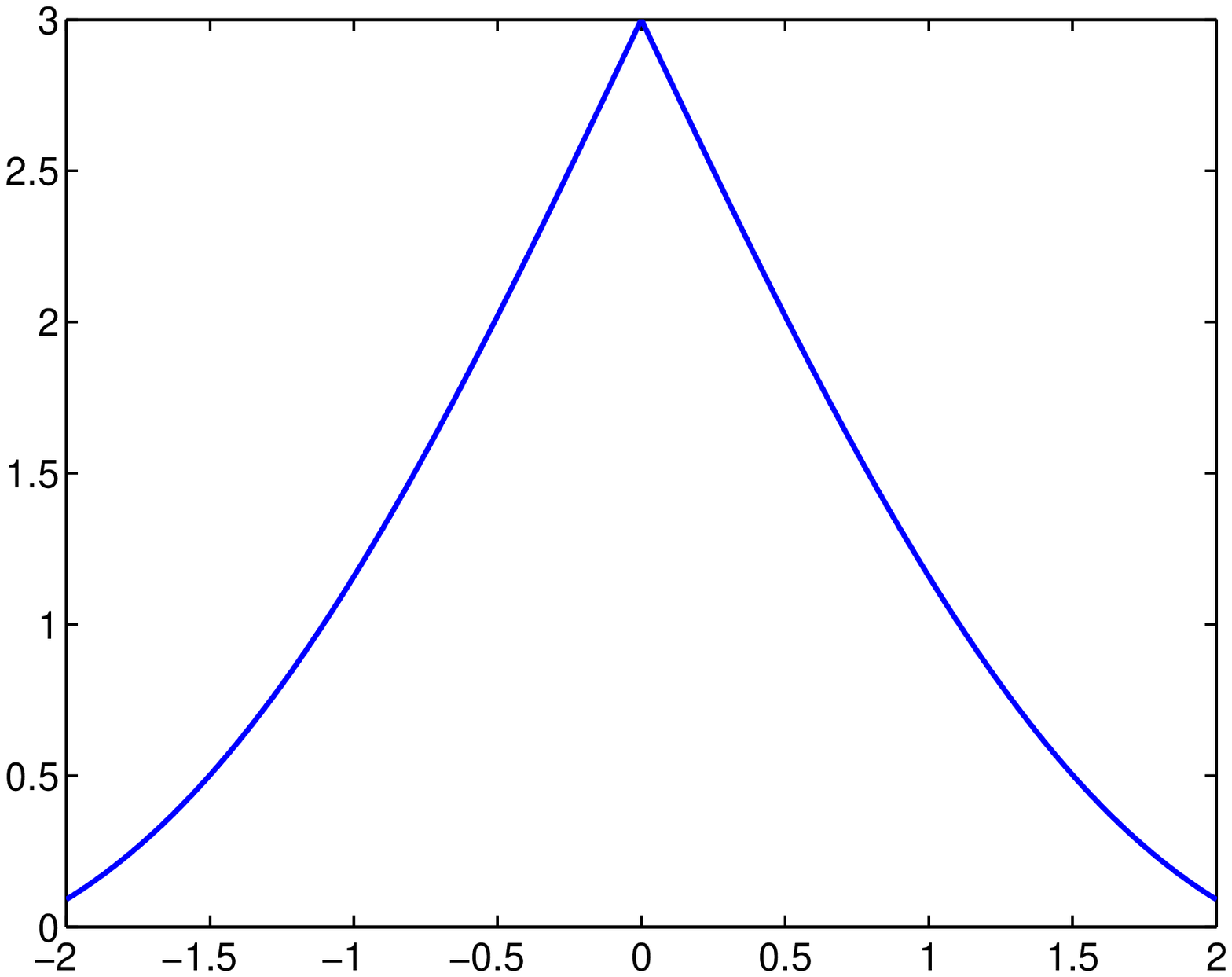} & \includegraphics[width=0.3\textwidth]{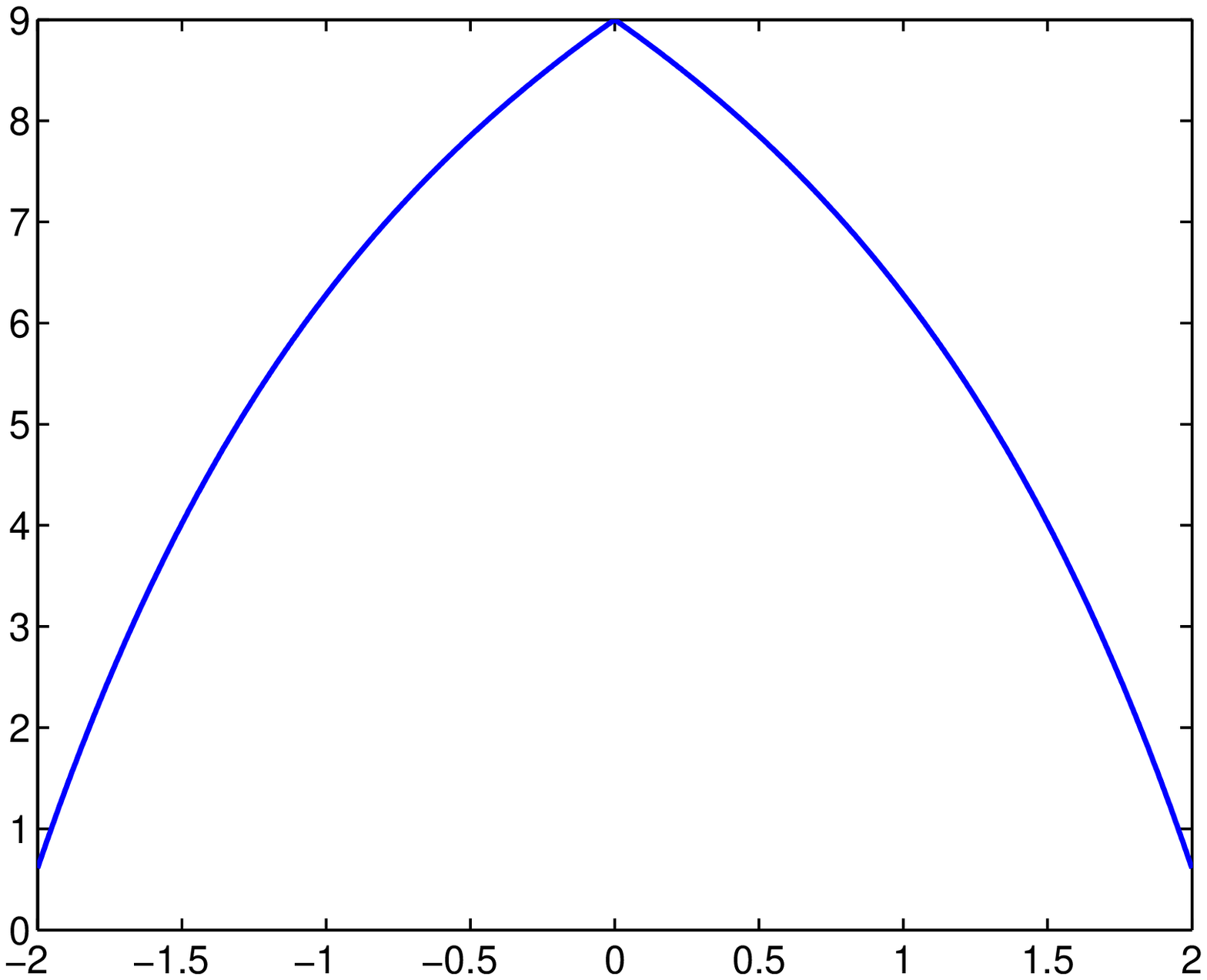} & \includegraphics[width=0.3\textwidth]{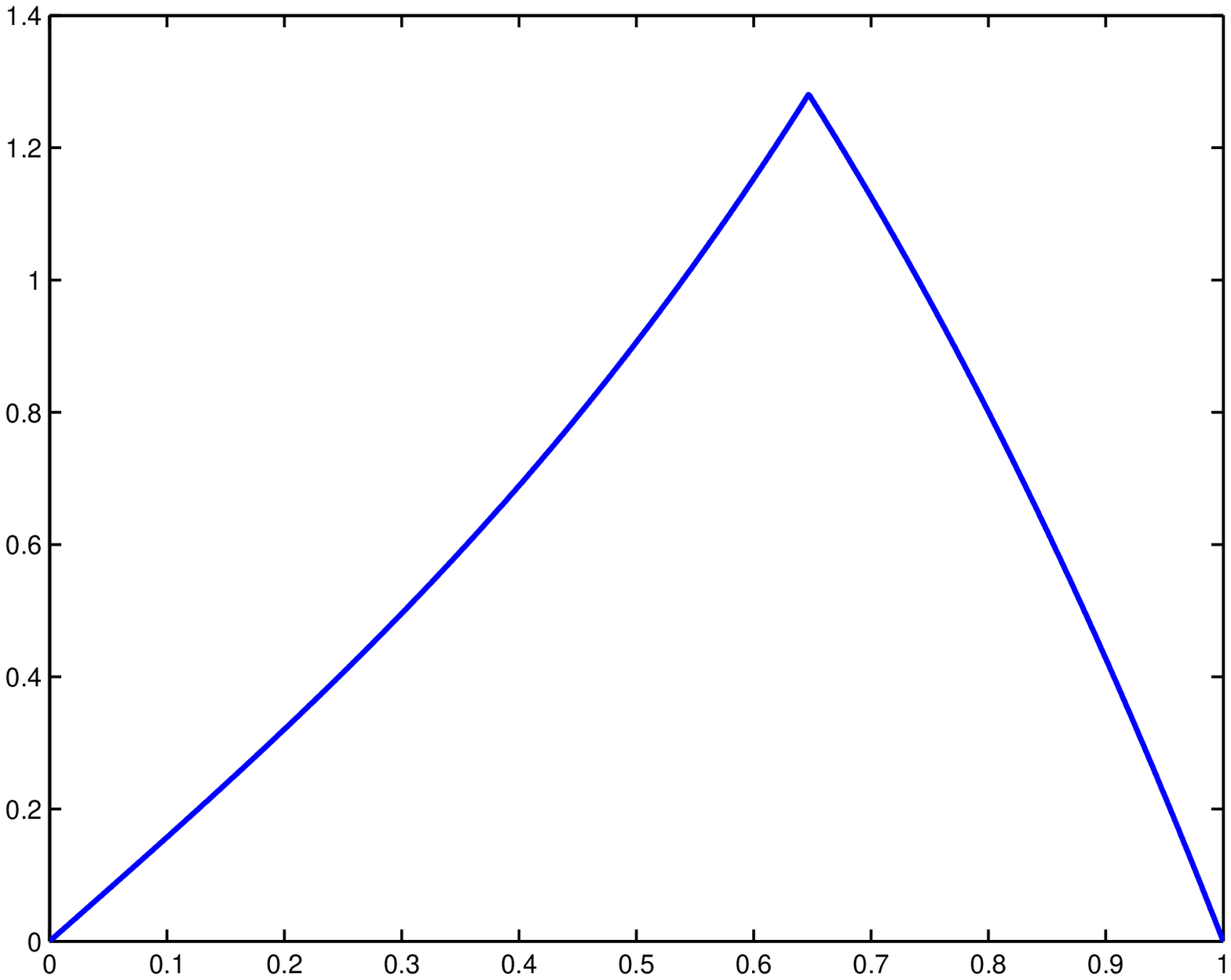}\\
\end{tabular}
\begin{tabular}{cc}
\includegraphics[width=0.3\textwidth]{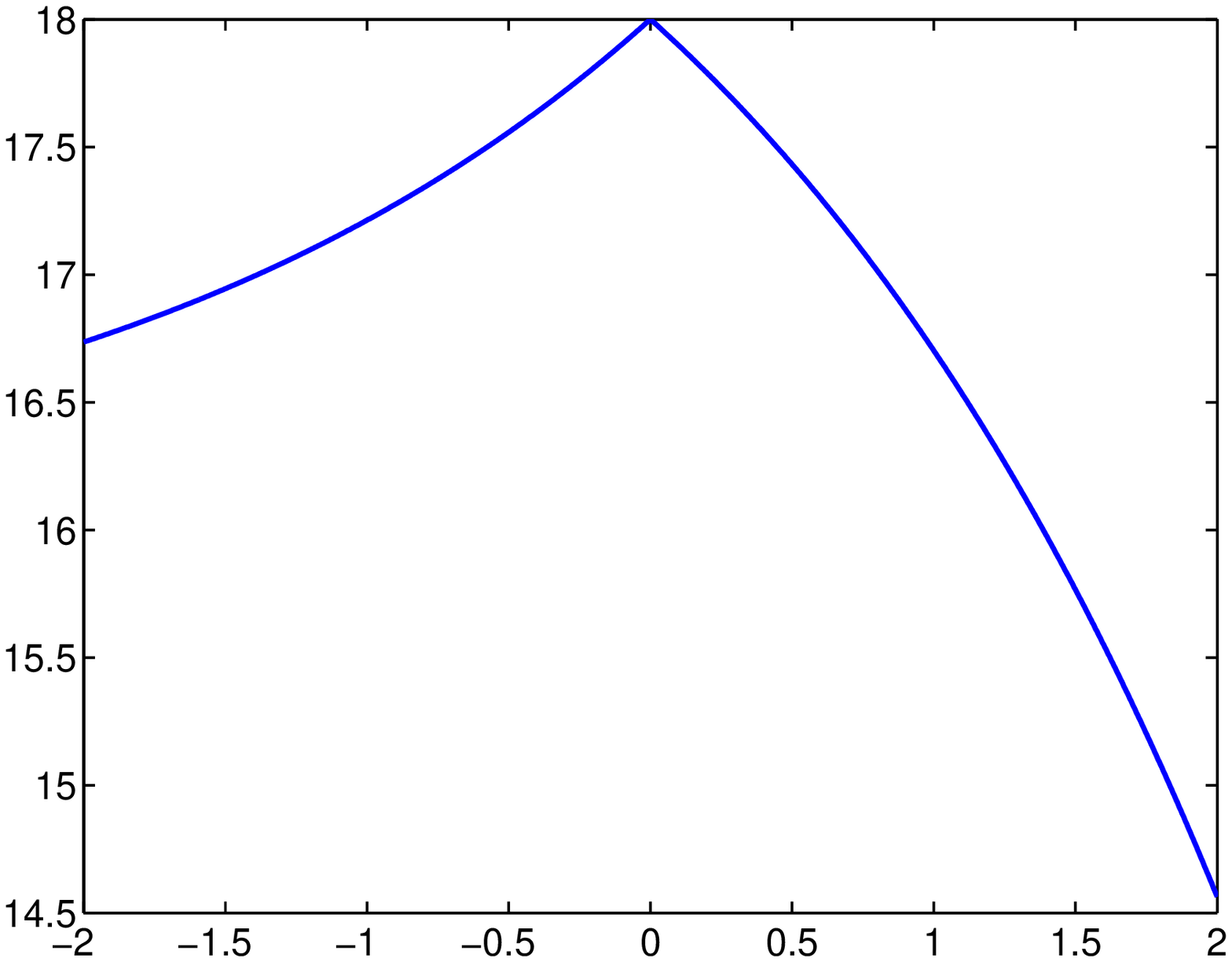} & \includegraphics[width=0.3\textwidth]{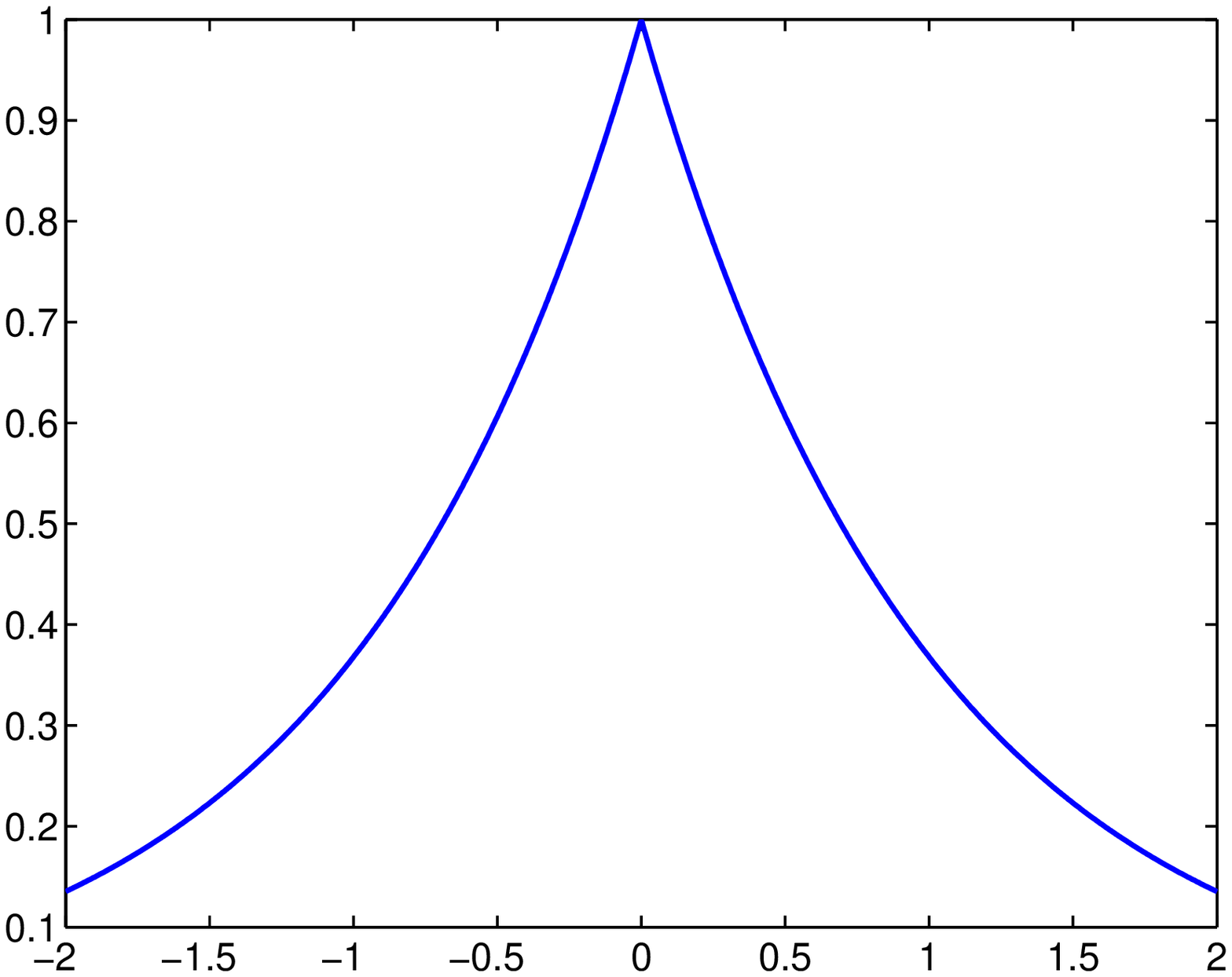}\\
\end{tabular}
\caption{Profile of the solutions of the five examples considered in one dimension (at the top, eikonal equation examples, at the bottom, HJ equations examples).}
\label{fig:ExProfiles}
\end{figure}

The first example is the eikonal equation with $f(x) = 1+\cos(x)$ with the Dirichlet boundary conditions being prescribed at $x = \pm 2$. The computational domain is $[-2,2]$. The exact solution is given by $u(x) = 3-|x+\sin(x)|$ and it's therefore piecewise smooth with a singularity at $x=0$ (see Figure \ref{fig:ExProfiles}). We represent the solution obtained with the monotone scheme and the $2^{nd}$ upwind filtered scheme for $50$ mesh points near the singularity in $[-0.4,0.4]$ on Figure \ref{fig:Ex1MonotoneVs2ndupwind}.

\newcommand{\ww}{0.65}
\begin{figure}[htdp]
\centering
\includegraphics[width=\ww\textwidth]{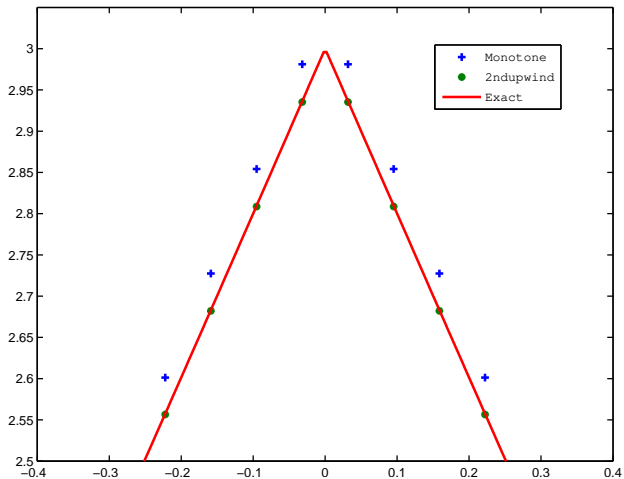}
\caption{Exact solution and solutions obtained with the monotone scheme and the $2^{nd}$ order upwind filtered scheme with $50$ grid points for the first example of the eikonal equation.}
\label{fig:Ex1MonotoneVs2ndupwind}
\end{figure}

The second example is again the eikonal equation with $f(x) = 1+e^{|x|}$, where the Dirichlet boundary conditions are once again prescribed at $x = \pm 2$ and the computational domain is $[-2,2]$. The exact solution is given by $u(x) = 10- |x|-e^{|x|}$ and as in the previous example, it's piecewise smooth with a singularity at $x=0$ (see Figure \ref{fig:ExProfiles}).

The third example, also a solution of the eikonal equation, is given by
\[u(x) = \begin{cases}
x^3+ax		& x \in [0,x_0],\\
1+a-ax-x^3	& x \in [x_0,1],
\end{cases}\]
with $a = \frac{1-2x_0^3}{2x_0 -1}$, $x_0 = \frac{\sqrt[3]{2}+2}{4\sqrt[3]{2}}$ and therefore $f(x) = 3x^2+a$. This example was chosen for two main reasons: there is no symmetry in the relationship between the singularity and the grid points, as opposed to the two previous examples where the singularity was always a midpoint of two consecutive grid points; this is one the examples in \cite{Abgrall} that the author uses to check the rate of convergence of the proposed method. The difference is that in \cite{Abgrall} the error in the $l^\infty$ norm is computed at the grid points in the interval $\left[\frac{1}{\sqrt[3]{2}},\frac{1}{2}\right]$, instead of all the grid points as we do here. The author chooses that interval since it's an interval where the solution is smooth but as we explained above we can look at the error on all grid points and still obtain the high order convergence.

We consider as well two HJ equations. The first one given by $H(p) = p^2$, a convex Hamiltonian, with $f(x) = e^x$ and
\[u(x) =\begin{cases}
-2e^{\frac{x}{2}}+20	& x \in [-2,0],\\
2e^{\frac{x}{2}}+16		& x \in [0,2].
\end{cases}\]
The second one given by $H(p) = \cos(p)^2+|p|$, a nonconvex Hamiltonian considered in \cite{Abgrall}, with $u(x) = e^{-|x|}$ and $f(x) = \cos(e^{-|x|})^2 + e^{-|x|}$. The profile of both solutions is depicted in Figure \ref{fig:ExProfiles} and the Dirichlet boundary conditions are prescribed at $x = \pm 2$, with the computational domain being $[-2,2]$. For the nonconvex example, the factor $\sqrt{h}$ in the filtered scheme \eqref{defnFiltered} was replaced by $h^{\frac{1}{10}}$ see Remark~\ref{remark:rates}.

The computational domain is discretized on a grid with $N$ points and the singularity is never a grid point.

\subsection{Computational results in one dimension}\label{sec:discussion1D}
In this subsection we discuss the computational results obtained in one dimension. The main purpose is to demonstrate that the filtered scheme achieves the higher order accuracy and that, in particular for the eikonal equation, the upwind filtered schemes achieve higher order convergence rate as proved in subsection \ref{sec:error} for the (unfiltered) upwind schemes. We organize the discussion in three parts: accuracy and behavior, order of convergence and upwind vs ENO. For the eikonal equation, we obtained results with the monotone scheme \eqref{monotone1deikonal} and the respective filtered schemes using as the accurate scheme the second centered scheme and the second, third and forth order upwind and ENO schemes. For HJ equations, we obtain results using the monotone scheme \eqref{monotone1dHJ} and the respective filtered schemes using as the accurate scheme the second order centered, upwind and ENO schemes. Third order upwind and ENO filtered schemes were also used, but they didn't show any advantage over the second order schemes.

\begin{table}[htdp]
\centering\footnotesize
\begin{tabular}{ccccccccc}
\multicolumn{9}{c}{Errors and order, $1^{st}$ Example}\\
N & \multicolumn{2}{c}{Monotone} & \multicolumn{2}{c}{$2^{nd}$ Upwind} & \multicolumn{2}{c}{$3^{rd}$ Upwind} & \multicolumn{2}{c}{$4^{th}$ Upwind}\\
\hline
64 & \num{4.465e-02} & - & \num{1.141e-03} & - & \num{8.532e-05} & - & \num{2.646e-06} & - \\
128 & \num{2.223e-02} & 0.99 & \num{2.908e-04} & 1.95 & \num{1.076e-05} & 2.95 & \num{1.700e-07} & 3.92 \\
256 & \num{1.109e-02} & 1.00 & \num{7.337e-05} & 1.98 & \num{1.348e-06} & 2.98 & \num{1.074e-08} & 3.96 \\
512 & \num{5.538e-03} & 1.00 & \num{1.842e-05} & 1.99 & \num{1.687e-07} & 2.99 & \num{6.745e-10} & 3.98 \\
1024 & \num{2.767e-03} & 1.00 & \num{4.615e-06} & 1.99 & \num{2.109e-08} & 3.00 & \num{4.224e-11} & 3.99 \\ \\
N & \multicolumn{2}{c}{$2^{nd}$ centered} & \multicolumn{2}{c}{$2^{nd}$ ENO} & \multicolumn{2}{c}{$3^{rd}$ ENO} & \multicolumn{2}{c}{$4^{th}$ ENO}\\
\hline
64 & \num{6.553e-04} & - & \num{7.660e-04} & - & \num{2.780e-05} & - & \num{5.561e-07} & - \\
128 & \num{1.559e-04} & 2.05 & \num{1.918e-04} & 1.97 & \num{3.546e-06} & 2.94 & \num{3.544e-08} & 3.93 \\
256 & \num{3.789e-05} & 2.03 & \num{4.803e-05} & 1.99 & \num{4.470e-07} & 2.97 & \num{2.236e-09} & 3.96 \\
512 & \num{9.451e-06} & 2.00 & \num{1.201e-05} & 1.99 & \num{5.608e-08} & 2.99 & \num{1.404e-10} & 3.98 \\
1024 & \num{2.317e-06} & 2.03 & \num{3.004e-06} & 2.00 & \num{7.022e-09} & 2.99 & \num{8.776e-12} & 3.99 \\\end{tabular}
\caption{Accuracy in the $l^\infty$ norm and order of convergence of the schemes for the first example of the eikonal equation.}
\label{table:Ex1errors}
\end{table}

\begin{table}[htdp]
\centering\footnotesize
\begin{tabular}{ccccccccc}
\multicolumn{9}{c}{Errors and order, $2^{nd}$ Example}\\
N & \multicolumn{2}{c}{Monotone} & \multicolumn{2}{c}{$2^{nd}$ Upwind} & \multicolumn{2}{c}{$3^{rd}$ Upwind} & \multicolumn{2}{c}{$4^{th}$ Upwind}\\
\hline
64 & \num{1.997e-01} & - & \num{8.011e-03} & - & \num{3.642e-04} & - & \num{1.766e-05} & - \\
128 & \num{9.984e-02} & 0.99 & \num{2.042e-03} & 1.95 & \num{4.716e-05} & 2.92 & \num{1.162e-06} & 3.88 \\
256 & \num{4.992e-02} & 0.99 & \num{5.153e-04} & 1.98 & \num{5.995e-06} & 2.96 & \num{7.441e-08} & 3.94 \\
512 & \num{2.496e-02} & 1.00 & \num{1.294e-04} & 1.99 & \num{7.555e-07} & 2.98 & \num{4.706e-09} & 3.97 \\
1024 & \num{1.248e-02} & 1.00 & \num{3.242e-05} & 1.99 & \num{9.482e-08} & 2.99 & \num{2.959e-10} & 3.99 \\ \\
N & \multicolumn{2}{c}{$2^{nd}$ centered} & \multicolumn{2}{c}{$2^{nd}$ ENO} & \multicolumn{2}{c}{$3^{rd}$ ENO} & \multicolumn{2}{c}{$4^{th}$ ENO}\\
\hline
64 & \num{6.358e-03} & - & \num{3.983e-03} & - & \num{1.705e-03} & - & \num{1.492e-03} & - \\
128 & \num{1.570e-03} & 2.00 & \num{1.018e-03} & 1.95 & \num{2.764e-04} & 2.60 & \num{1.823e-03} & -0.29 \\
256 & \num{3.859e-04} & 2.01 & \num{2.573e-04} & 1.97 & \num{4.899e-05} & 2.48 & \num{2.499e-04} & 2.85 \\
512 & \num{9.700e-05} & 1.99 & \num{6.466e-05} & 1.99 & \num{8.981e-06} & 2.44 & \num{5.037e-05} & 2.30 \\
1024 & \num{2.410e-05} & 2.01 & \num{1.621e-05} & 1.99 & \num{1.422e-06} & 2.66 & \num{4.332e-05} & 0.22 \\\end{tabular}
\caption{Accuracy in the $l^\infty$ norm and order of convergence of the schemes for the second example of the eikonal equation.}
\label{table:Ex2errors}
\end{table}

\begin{table}[htdp]
\centering\footnotesize
\begin{tabular}{ccccccccc}
\multicolumn{9}{c}{Errors and order, $3^{rd}$ Example}\\
N & \multicolumn{2}{c}{Monotone} & \multicolumn{2}{c}{$2^{nd}$ Upwind} & \multicolumn{2}{c}{$3^{rd}$ Upwind} & \multicolumn{2}{c}{$4^{th}$ Upwind}\\
\hline
64 & \num{1.368e-02} & - & \num{3.079e-04} & - & \num{1.332e-15} & - & \num{2.887e-15} & - \\
128 & \num{6.756e-03} & 1.01 & \num{7.860e-05} & 1.95 & \num{1.110e-15} & 0.26 & \num{3.109e-15} & -0.11 \\
256 & \num{3.417e-03} & 0.98 & \num{1.960e-05} & 1.99 & \num{2.220e-15} & -0.99 & \num{4.219e-15} & -0.44 \\
512 & \num{1.703e-03} & 1.00 & \num{4.924e-06} & 1.99 & \num{2.887e-15} & -0.38 & \num{2.665e-15} & 0.66 \\
1024 & \num{8.521e-04} & 1.00 & \num{1.232e-06} & 2.00 & \num{5.107e-15} & -0.82 & \num{4.663e-15} & -0.81 \\ \\
N & \multicolumn{2}{c}{$2^{nd}$ centered} & \multicolumn{2}{c}{$2^{nd}$ ENO} & \multicolumn{2}{c}{$3^{rd}$ ENO} & \multicolumn{2}{c}{$4^{th}$ ENO}\\
\hline
64 & \num{6.357e-04} & - & \num{3.079e-04} & - & \num{2.442e-15} & - & \num{1.332e-15} & - \\
128 & \num{1.596e-04} & 1.97 & \num{7.860e-05} & 1.95 & \num{7.550e-15} & -1.61 & \num{4.441e-15} & -1.72 \\
256 & \num{3.950e-05} & 2.00 & \num{1.960e-05} & 1.99 & \num{2.109e-14} & -1.47 & \num{3.819e-14} & -3.09 \\
512 & \num{9.886e-06} & 1.99 & \num{4.924e-06} & 1.99 & \num{3.220e-14} & -0.61 & \num{1.134e-13} & -1.57 \\
1024 & \num{2.192e-06} & 2.17 & \num{1.232e-06} & 2.00 & \num{5.818e-14} & -0.85 & \num{8.527e-14} & 0.41 \\\end{tabular}
\caption{Accuracy in the $l^\infty$ norm and order of convergence of the schemes for the third example of the eikonal equation.}
\label{table:Ex3errors}
\end{table}

\begin{table}[htdp]
\centering\footnotesize
\begin{tabular}{ccccccccc}
\multicolumn{9}{c}{Errors and order, $4^{th}$ Example}\\
N & \multicolumn{2}{c}{Monotone} & \multicolumn{2}{c}{$2^{nd}$ centered} & \multicolumn{2}{c}{$2^{nd}$ Upwind} & \multicolumn{2}{c}{$2^{nd}$ ENO}\\
\hline
64 & \num{1.234e-01} & - & \num{8.532e-02} & - & \num{9.307e-02} & - & \num{8.308e-02} & - \\
128 & \num{6.106e-02} & 1.00 & \num{4.226e-02} & 1.00 & \num{4.179e-02} & 1.14 & \num{4.132e-02} & 1.00 \\
256 & \num{3.037e-02} & 1.00 & \num{2.108e-02} & 1.00 & \num{2.095e-02} & 0.99 & \num{2.067e-02} & 0.99 \\
512 & \num{1.515e-02} & 1.00 & \num{1.054e-02} & 1.00 & \num{1.044e-02} & 1.00 & \num{1.057e-02} & 0.97 \\
1024 & \num{7.563e-03} & 1.00 & \num{5.310e-03} & 0.99 & \num{5.304e-03} & 0.98 & \num{5.272e-03} & 1.00 \\
\end{tabular}
\caption{Accuracy in the $l^\infty$ norm and order of convergence of the schemes for the fourth example ($H(p)=p^2$).}
\label{table:Ex4errors}
\end{table}

\begin{table}[htdp]
\centering\footnotesize
\begin{tabular}{ccccccccc}
\multicolumn{9}{c}{Errors and order, $5^{th}$ Example}\\
N & \multicolumn{2}{c}{Monotone} & \multicolumn{2}{c}{$2^{nd}$ centered} & \multicolumn{2}{c}{$2^{nd}$ Upwind} & \multicolumn{2}{c}{$2^{nd}$ ENO}\\
\hline
64 & \num{1.328e-01} & - & \num{2.105e-02} & -- & \num{1.129e-01} & - & \num{8.577e-02} & - \\
128 & \num{1.095e-01} & 0.27 & \num{1.111e-02} & 0.91 & \num{3.446e-02} & 1.69 & \num{6.995e-02} & 0.29 \\
256 & \num{8.855e-02} & 0.31 & \num{4.365e-03} & 1.34 & \num{1.379e-02} & 1.31 & \num{5.072e-02} & 0.46 \\
512 & \num{7.043e-02} & 0.33 & \num{2.360e-03} & 0.88 & \num{3.772e-03} & 1.86 & \num{1.288e-02} & 1.97 \\
1024 & \num{5.401e-02} & 0.38 & \num{2.693e-03} & -0.19 & \num{1.870e-03} & 1.01 & \num{8.170e-03} & 0.66 \\\end{tabular}
\caption{Accuracy in the $l^\infty$ norm and order of convergence of the schemes for the fifth example ($H(p)=\cos(p)^2+|p|$).}
\label{table:Ex5errors}
\end{table}

\textit{Accuracy and behavior of the filtered schemes.} We begin by comparing the accuracy of the monotone scheme with the filtered schemes by looking at the error in the $l^\infty$ norm in Figure \ref{fig:Exloglog1D} and Tables \ref{table:Ex1errors}, \ref{table:Ex2errors}, \ref{table:Ex3errors}, \ref{table:Ex4errors}, \ref{table:Ex5errors}. As expected the filtered schemes have improved accuracy.

Once close to the solution, the filtered schemes behave as designed choosing to use the accurate scheme whenever possible, i.e., whenever they interpolate the solution in a smooth region. Therefore, in the eikonal equation case, the monotone scheme ends up not being used in the upwind and ENO filtered schemes since these schemes have a choice on where to interpolate, choosing to always do so on the region where the solution is smooth. This isn't however the case when the $2^{nd}$ order centered scheme is used as the accurate scheme. In this case, the filtered scheme falls back to the monotone scheme on the two grid points adjacent to the singularity. As for the HJ equations case, the forward and backward approximation are both always used and thus near the singularity the filtered schemes fall back to the monotone scheme.

\begin{figure}[htdp]
\centering
\begin{tabular}{c}
\includegraphics[width=0.65\textwidth]{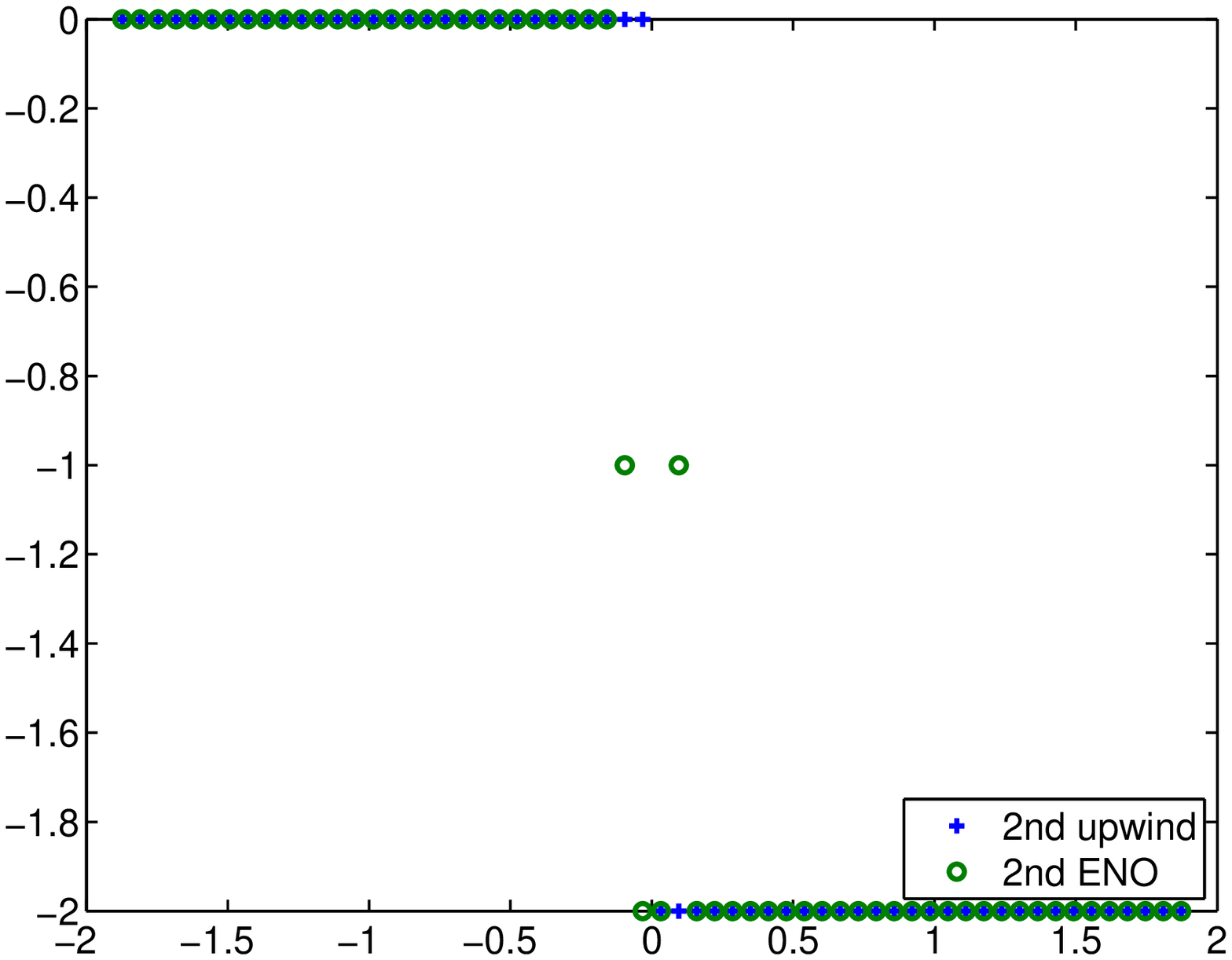}\\
\includegraphics[width=0.65\textwidth]{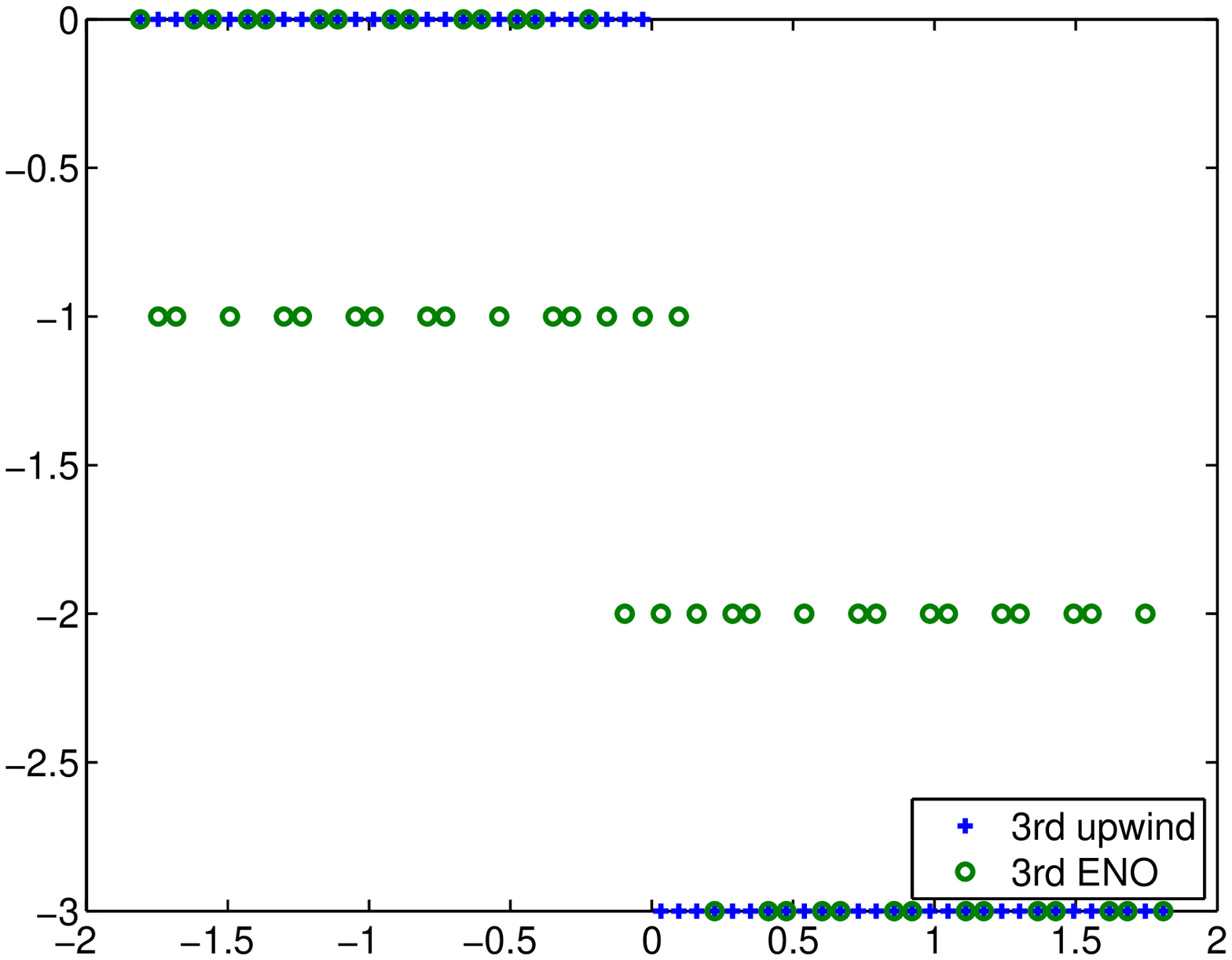}\\
\includegraphics[width=0.65\textwidth]{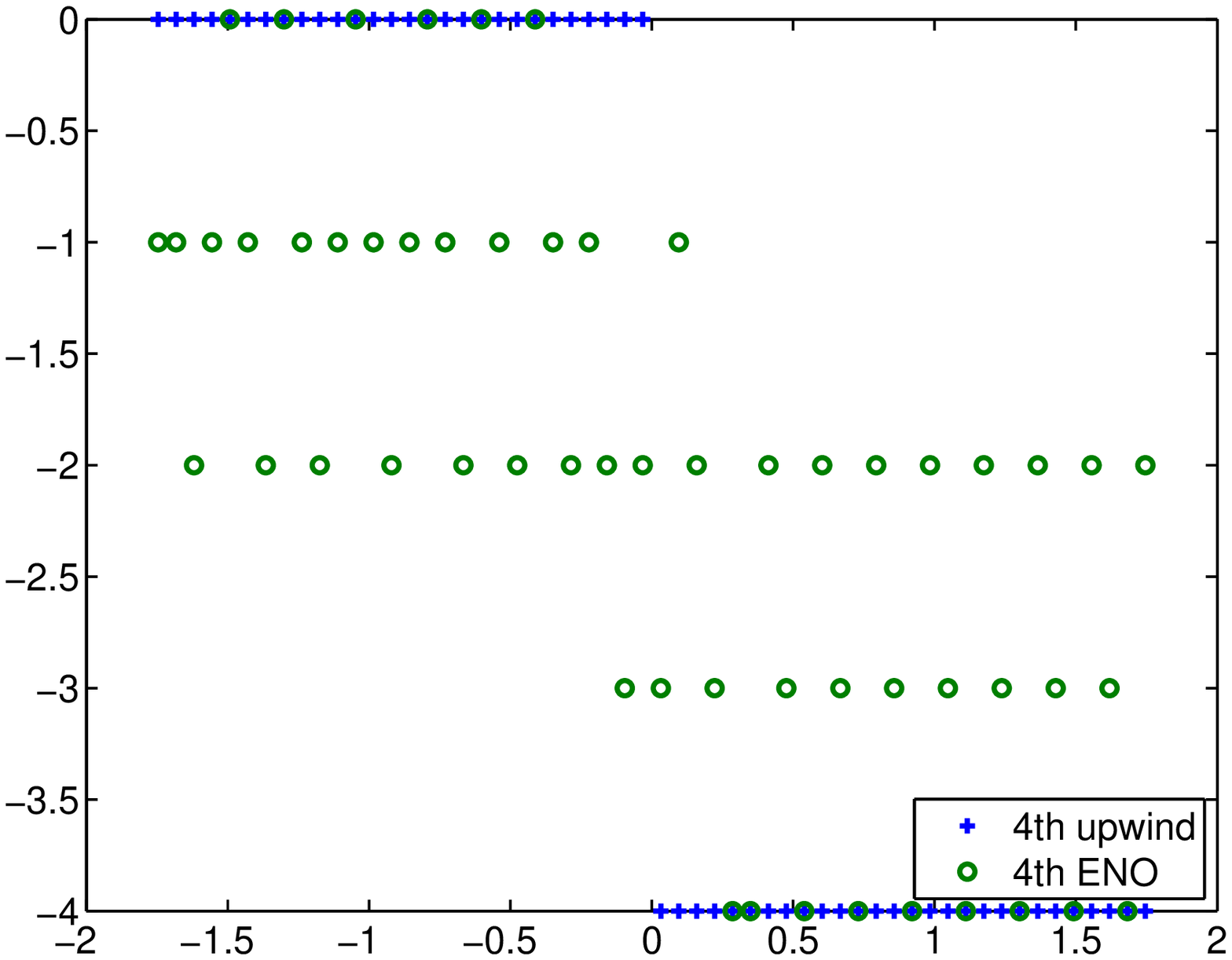}
\end{tabular}
\caption{Active stencils in the accurate scheme in the last iteration for the solutions of the second example considered: $-i$ means that $i$ points to the left were used in the interpolation.
}
\label{fig:Ex2ActiveStencils}
\end{figure}

\subsubsection*{Order of convergence.} We first discuss the eikonal equation case. Examining Figure \ref{fig:Exloglog1D} and Tables \ref{table:Ex1errors}, \ref{table:Ex2errors}, \ref{table:Ex3errors}, we conclude that all the upwind filtered schemes have convergence rate corresponding to the order of accuracy of the accurate scheme, except in the last example, where for the $3^{rd}$ and $4^{th}$ order schemes we obtain machine accuracy. This exception is explained by the fact that in this example the solution is piecewise cubic and therefore these schemes end up being exact (interpolating a cubic polynomial with 4 or more points yields the exact same cubic polynomial). Obtaining the higher order convergence rate is in accordance with Theorem \ref{convergence1D} since for the upwind filtered schemes the accurate scheme is always active as mentioned above. We should point out that this higher rate of convergence was already possible to obtain using ENO schemes as is depicted in Figure \ref{fig:Exloglog1D} (with the sole exception of the $4^{th}$ order ENO scheme in the second example, which we discuss below). Moreover, the filtered scheme using the second centered scheme also provided second order convergence even though as pointed above it falls into the monotone scheme near the singularity, more precisely on the two grid points that enclose it.

\begin{figure}[htdp]
\centering
\begin{tabular}{c}
\includegraphics[width=0.65\textwidth]{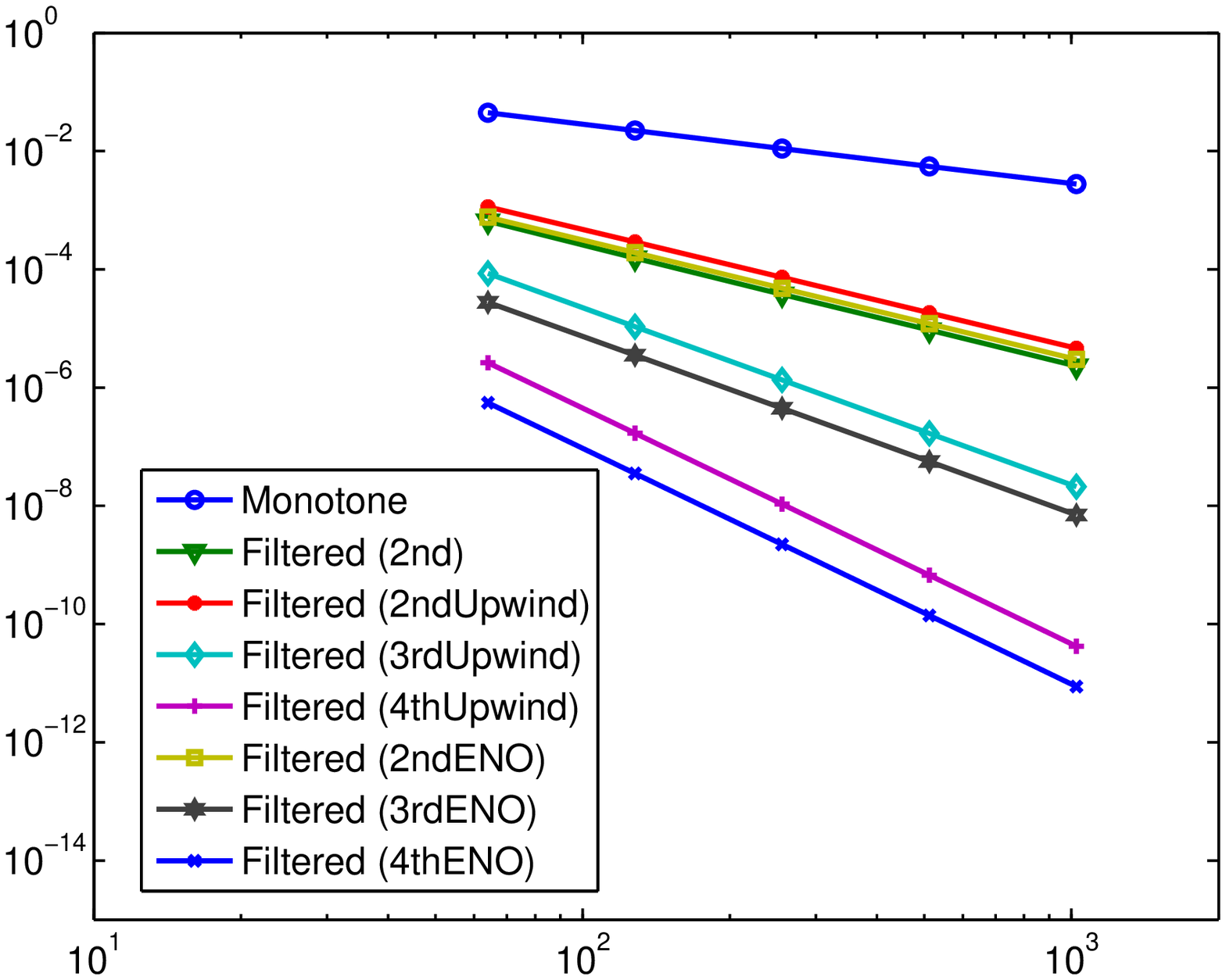}\\
\includegraphics[width=0.65\textwidth]{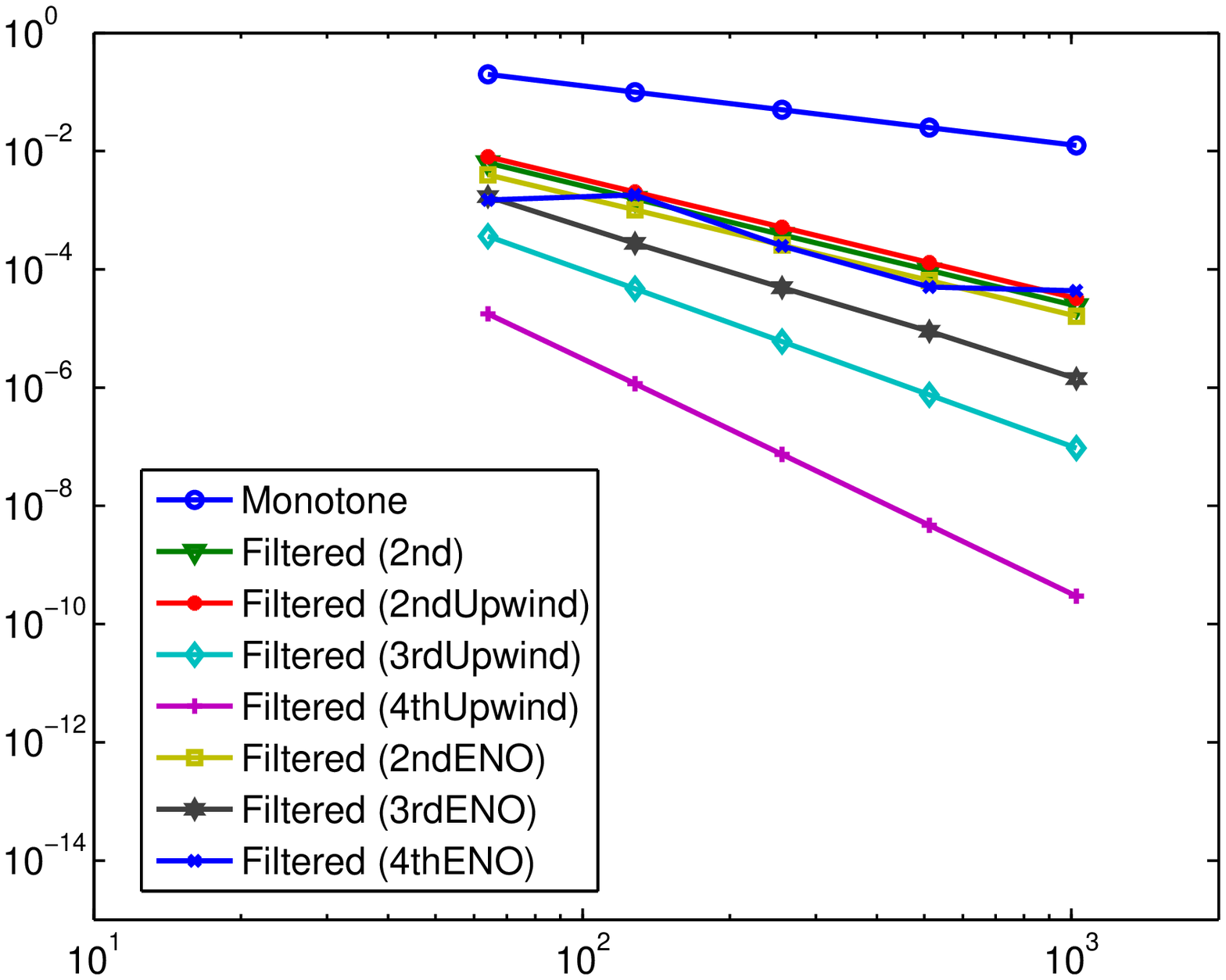}\\
\includegraphics[width=0.65\textwidth]{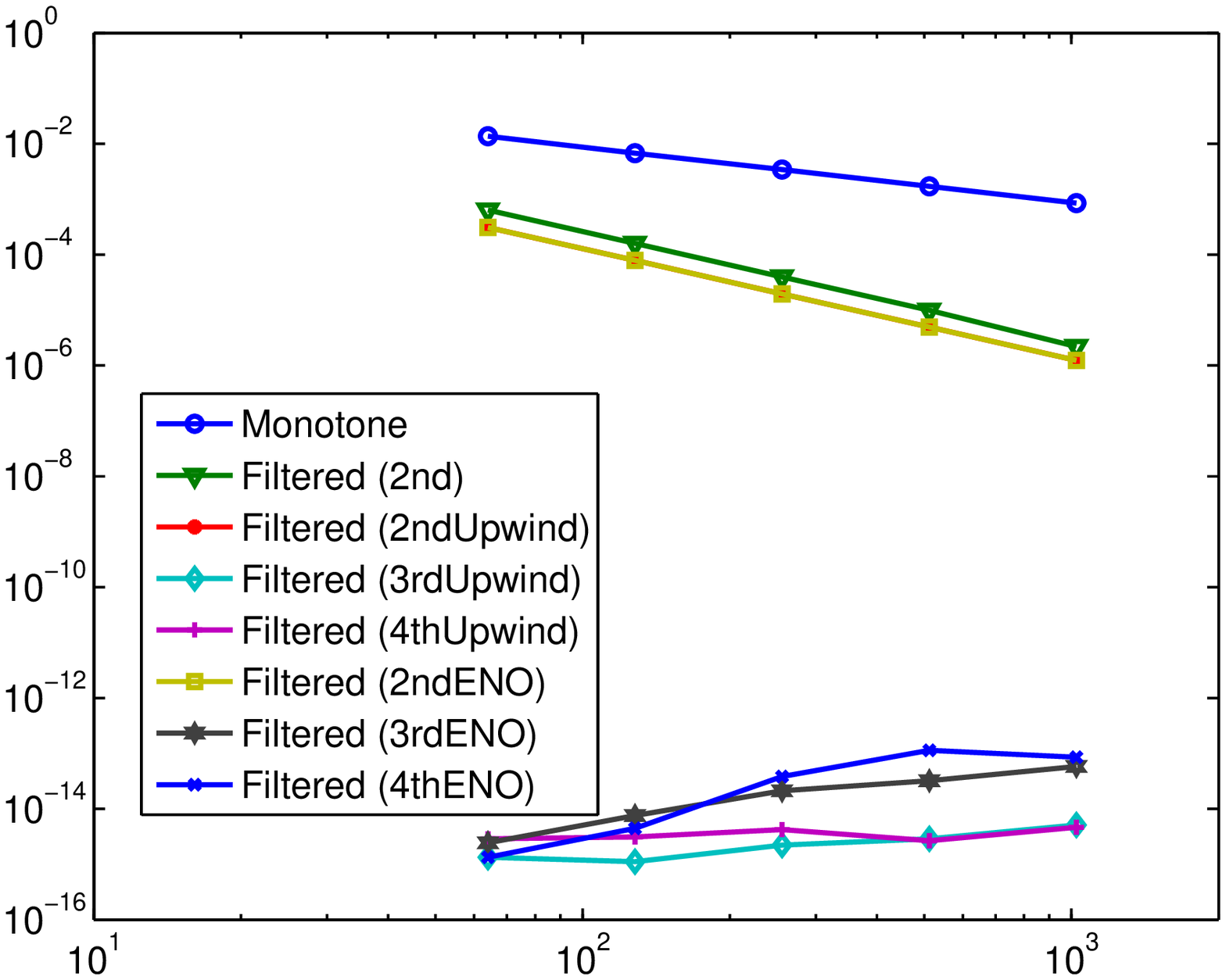}
\end{tabular}
\caption{Log-log plot of the errors for the one-dimensional examples of the eikonal equation.}
\label{fig:Exloglog1D}
\end{figure}

\begin{figure}[htdp]
\centering
\begin{tabular}{cc}
\hspace{-1in}
\includegraphics[width=0.65\textwidth]{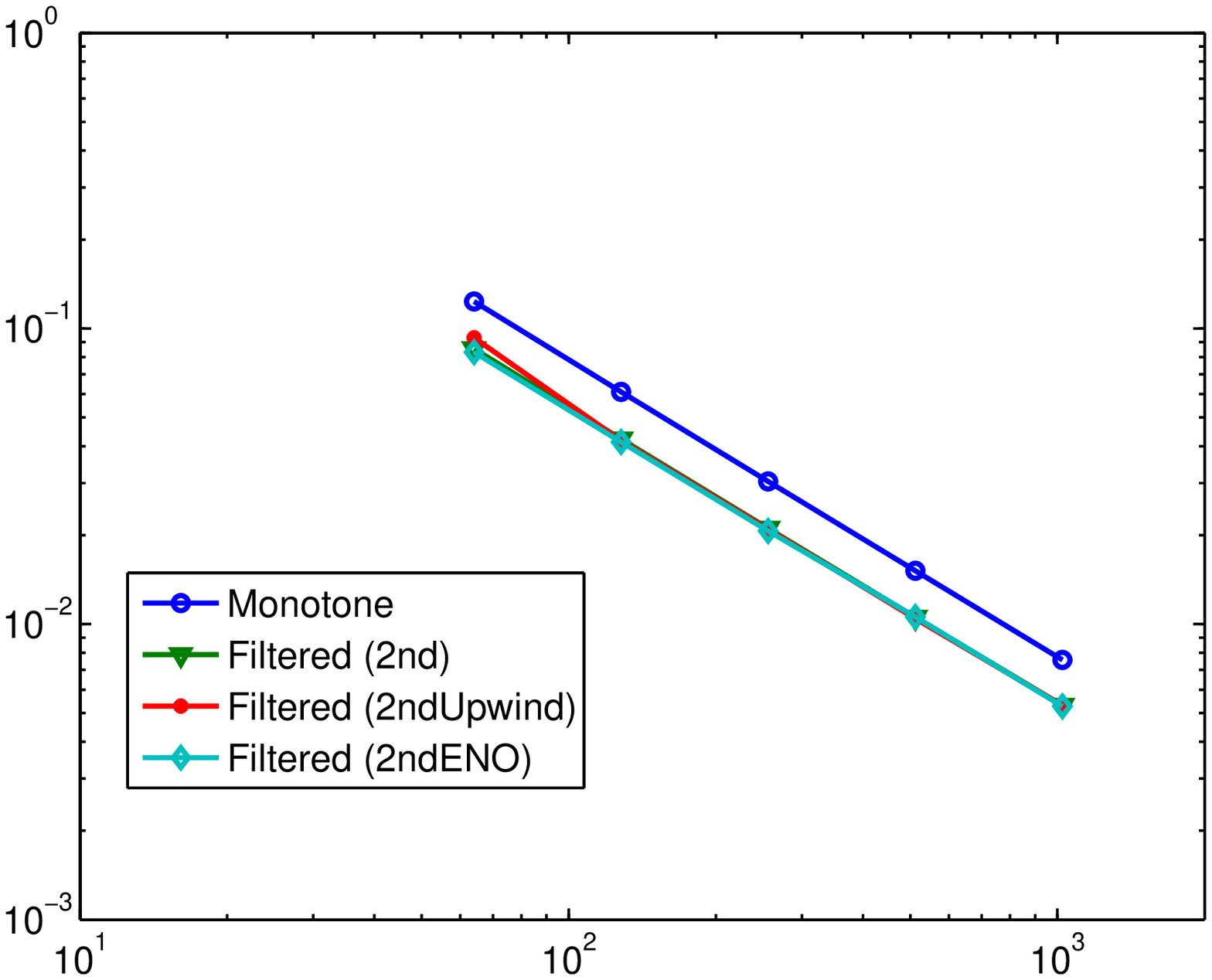} & \includegraphics[width=0.65\textwidth]{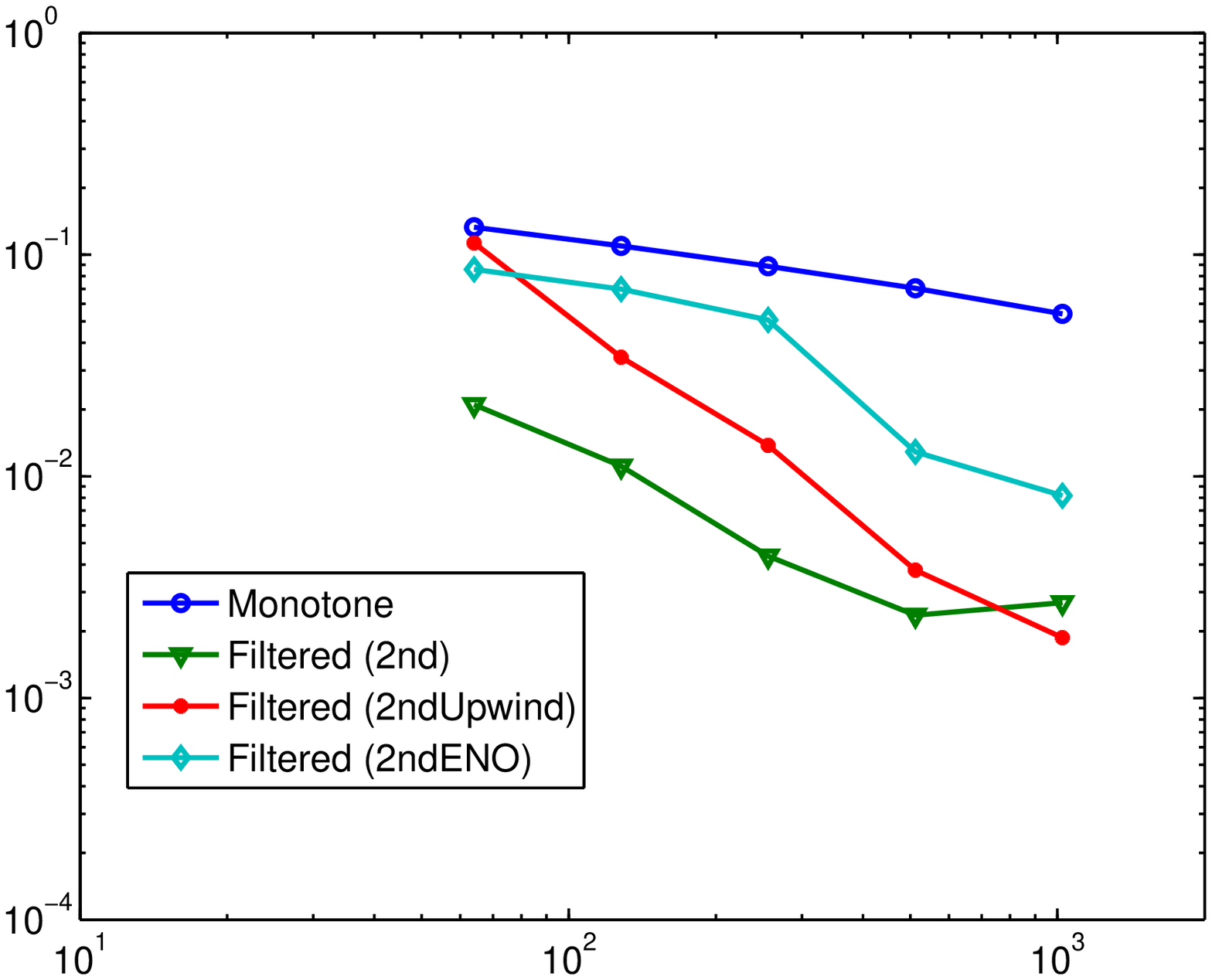}
\end{tabular}
\caption{Log-log plot of the errors for the one-dimensional examples of HJ equations.}
\label{fig:Exloglog1D}
\end{figure}

In the general case of the HJ equations, the results are not as clean. In the first example, the order of convergence remains the same with the monotone scheme still being first order convergent. As for the second example, where the Hamiltonian is not convex, the monotone scheme is not even first order convergent as in all the other examples and we see an increase in the order of convergence for both the second order upwind and ENO filtered schemes. In general we don't expect this increase in the order of convergence of the global accuracy since near the singularity we fall back into the monotone scheme.

\textit{Upwind vs ENO.} The ENO filtered schemes only outperformed the upwind filtered schemes in the first example for the eikonal equation. In this example, both schemes have the same rate of convergence but with ENO schemes having a smaller constant, which can be explained by the fact that the ENO schemes in this example tend to use centered discretizations which have a smaller truncation error than the upwind discretizations. On the other examples, the upwind filtered schemes always performed at least as good as its ENO counterparts.

To finish the discussion, we now take a closer look at the second example for the eikonal equation. In this, the fourth order ENO scheme doesn't have fourth order accuracy and is in fact less accurate than the third order ENO scheme, which also doesn't have third order accuracy. In this case, although never interpolating where the solution is not smooth, the ENO scheme uses three different stencils (see Figure \ref{fig:Ex2ActiveStencils}) which somehow seems to prevent us to get the fourth order accuracy. Moreover, the second order ENO scheme performs an interpolation where the solution is not smooth, although this doesn't affect the rate of convergence of the method (see Figure \ref{fig:Ex2ActiveStencils}). This example illustrates the advantage of using the upwind filtered scheme, which has a fixed stencil, over the ENO scheme, which, while designed heuristically to choose the best stencil, may not always do so. It is worth mentioning that the WENO schemes were introduced to improve the ENO schemes, but these add another layer of complexity without any clear advantage over the filtered upwind schemes.

\subsection{Exact solutions in two dimensions}

\label{sec:examples2D}
In this subsection we discuss the two dimensional examples.
We consider three solutions to the eikonal equation \eqref{eikonal} with $f \equiv 1$, $g \equiv 0$ and $\Gamma$ given by a circle, two points, and a semicircle.
Specifically, we have
\begin{enumerate}
	\item $\Gamma = \left\{(x,y)\in\R^2: x^2+y^2 = 1\right\},$
	\item $\Gamma = \left\{\left(\frac{1}{2},\frac{1}{2}\right),\left(-\frac{1}{2},-\frac{1}{2}\right)\right\},$
	\item $\Gamma = \left\{(x,y)\in\R^2: (x^2+y^2 = 1, x \geq 0) \vee (|y| \leq 1, x = 0)\right\}.$
\end{enumerate}

We chose these examples because the corresponding solutions have varying degrees of regularity. In the first, the solution is smooth (outside $\Gamma$).  In the second we have a singularity  along the line $\left\{(x,y)\in\R^2: x=-y\right\}$ and therefore the solution is only piecewise smooth outside ($\Gamma$). In the third, (outside $\Gamma$) the solution is smooth for $x > 0$ but only Lipschitz continuous for $x <0$. The exact solution is the distance function to the set $\Gamma$.

All computations are performed on the domain $[-2,2]\times[-2,2]$, which is discretized  on an $N\times N$ grid. We assume the exact solution to be known at the neighboring grid points of $\Gamma$ as discussed in subsection \ref{sec:boundary}, except in the second example where we initialize the solution where $u < 0.1$ in order to avoid dealing with the singularities at $\Gamma$ (this is a standard thing to do when studying the higher global accuracy of the methods).

All solutions are displayed in Figure \ref{fig:ExProfiles2D}.
\begin{figure}[htdp]
\centering
\begin{tabular}{ccc}
\hspace{-1in}
\includegraphics[width=0.43\textwidth]{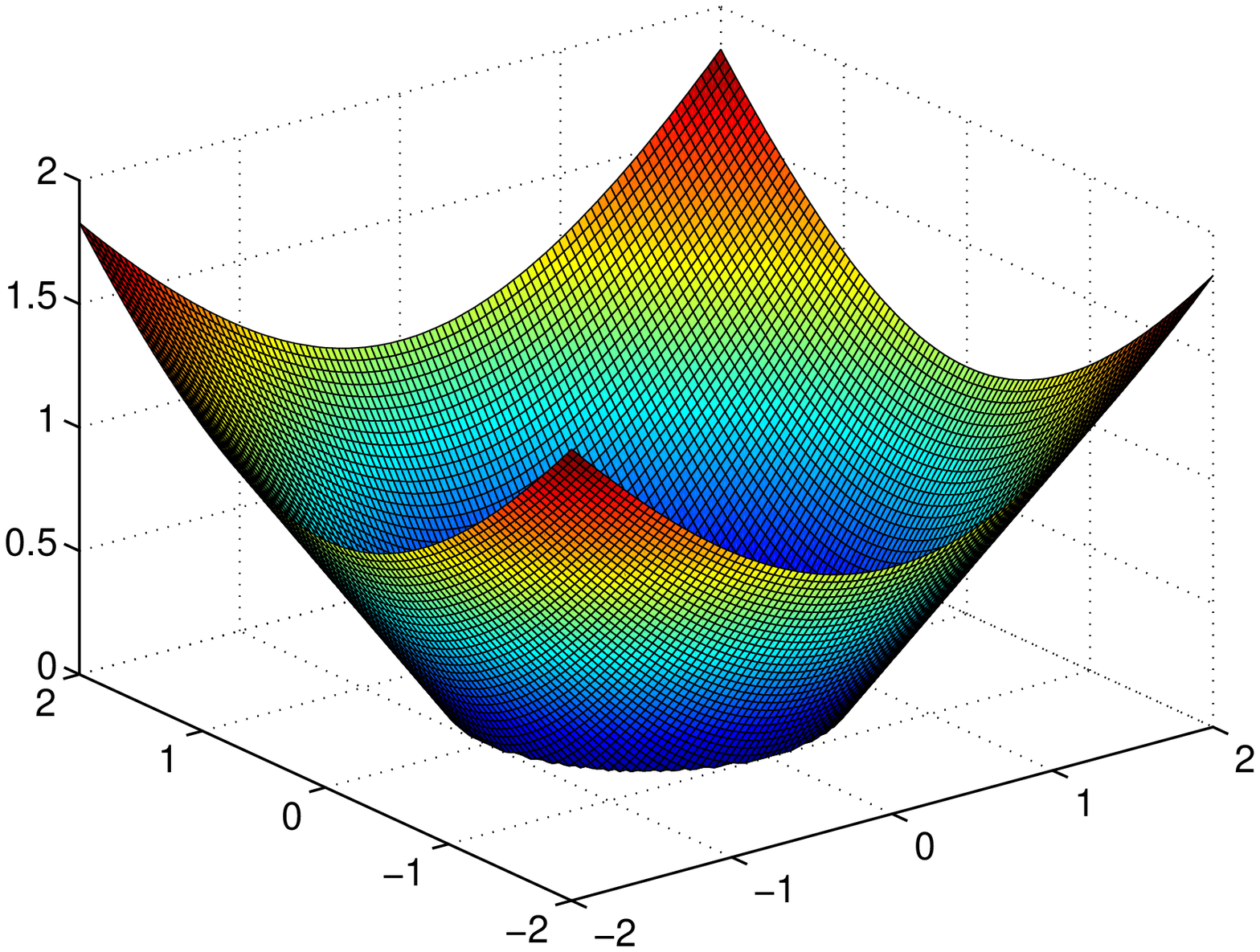} & \includegraphics[width=0.43\textwidth]{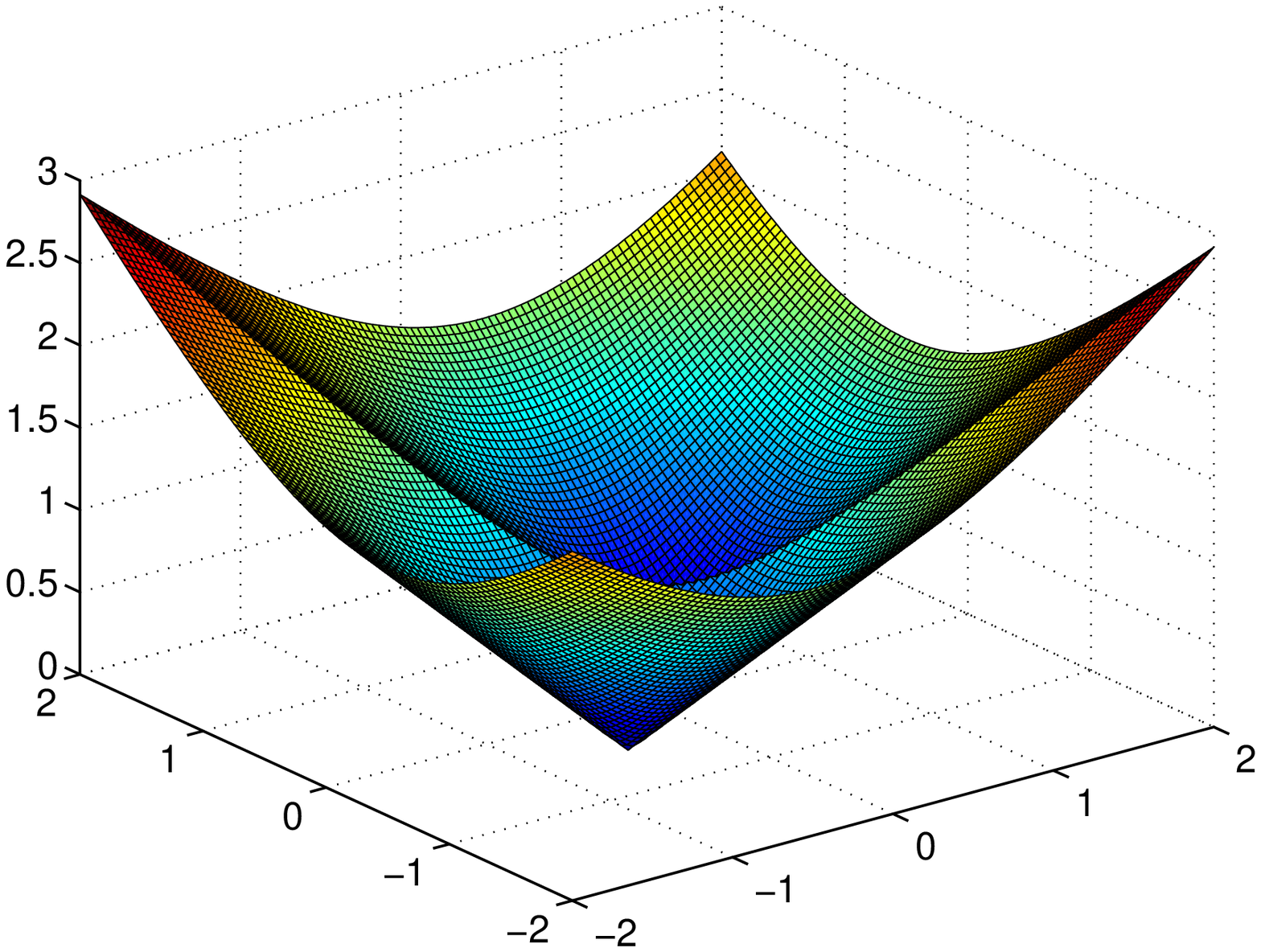} & \includegraphics[width=0.43\textwidth]{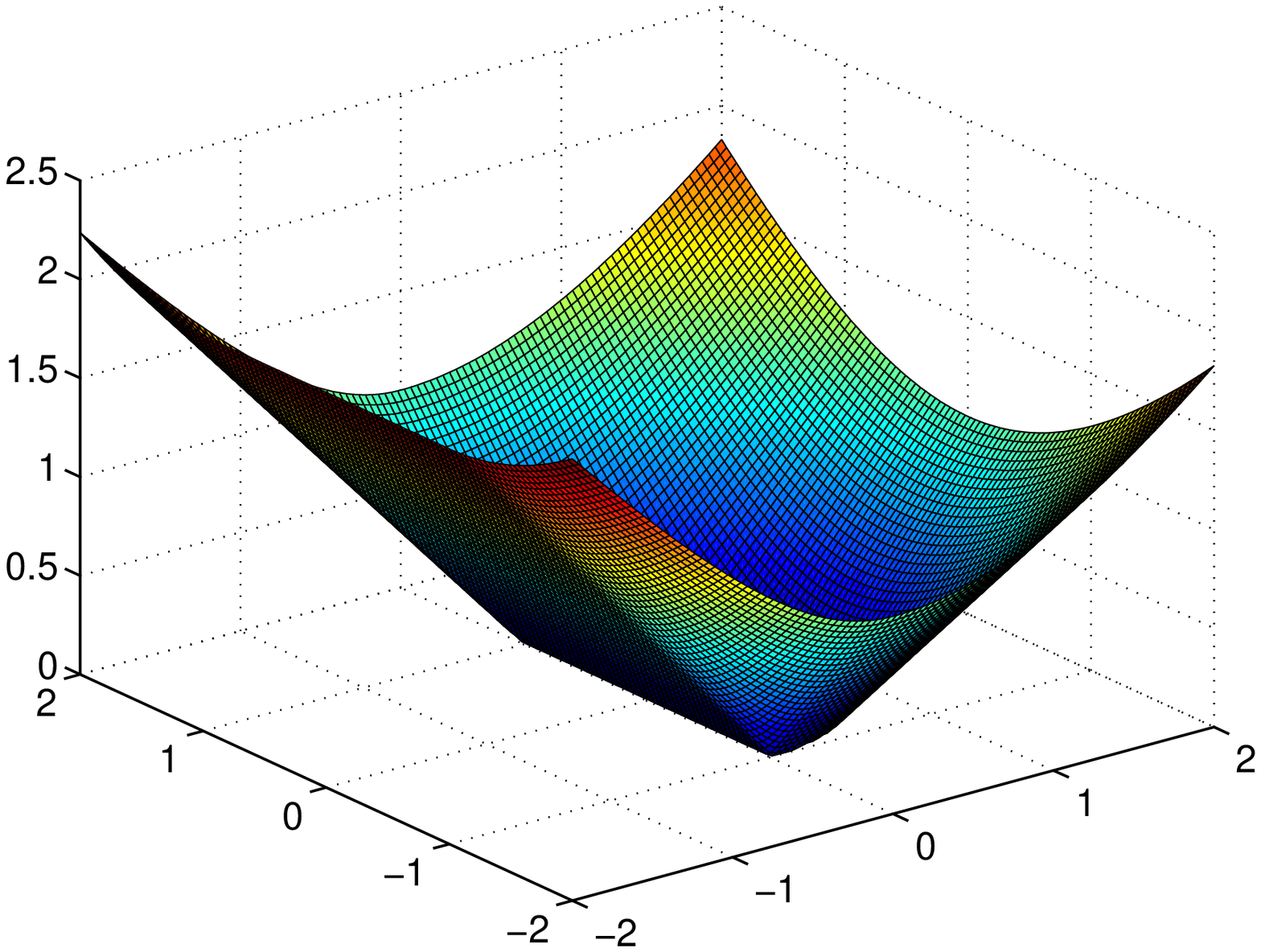}\\
\hspace{-1in}
\includegraphics[width=0.43\textwidth]{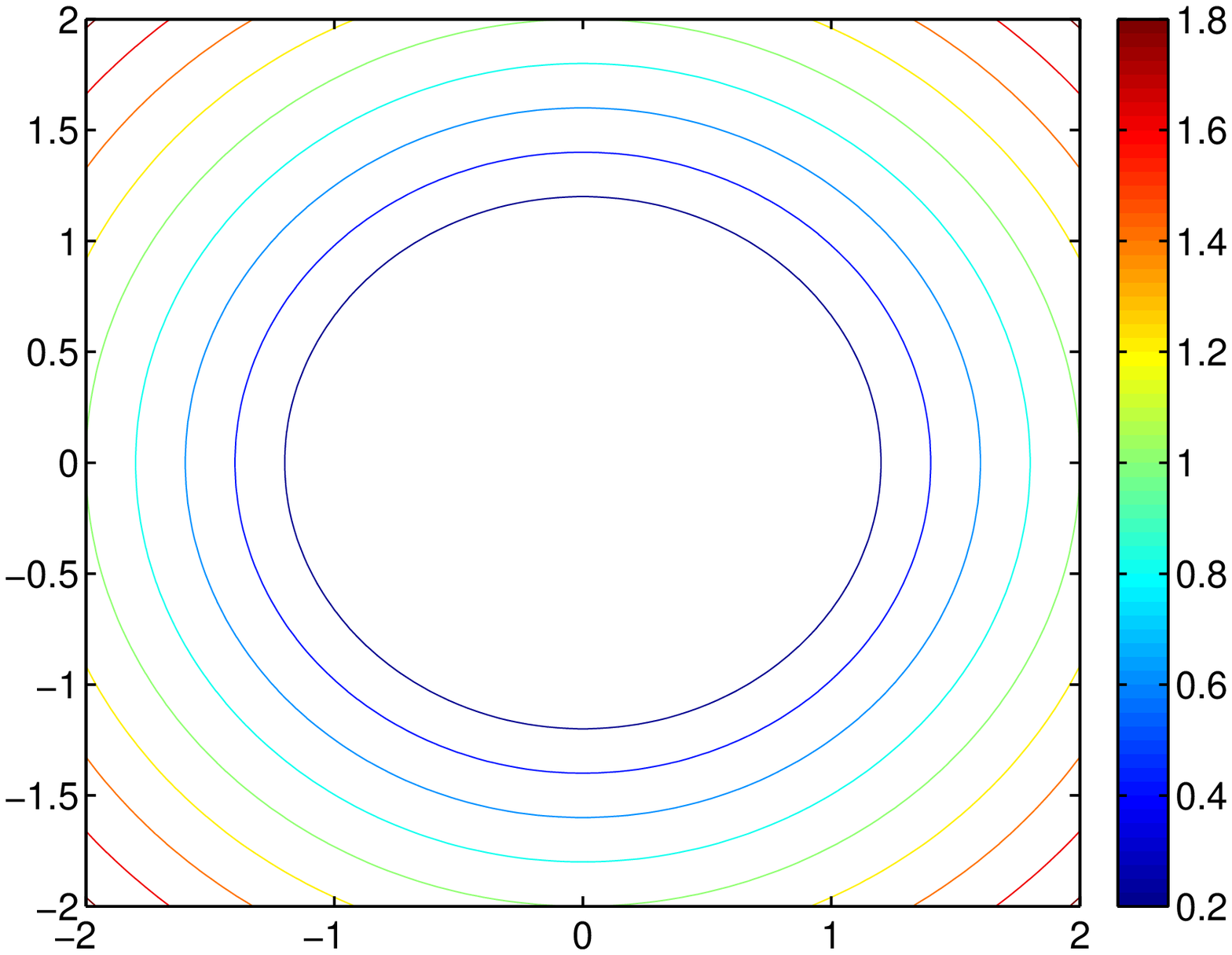} & \includegraphics[width=0.43\textwidth]{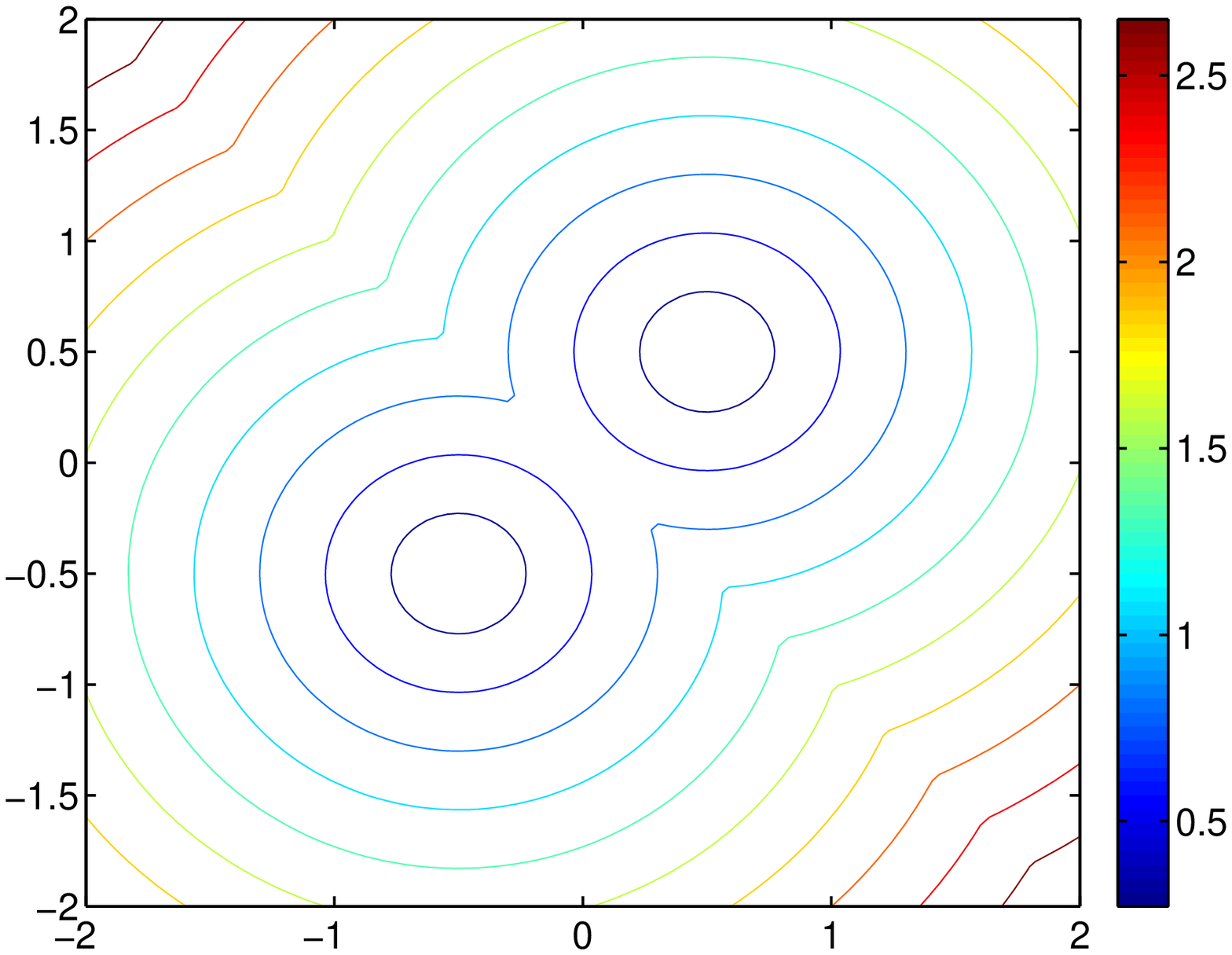} & \includegraphics[width=0.43\textwidth]{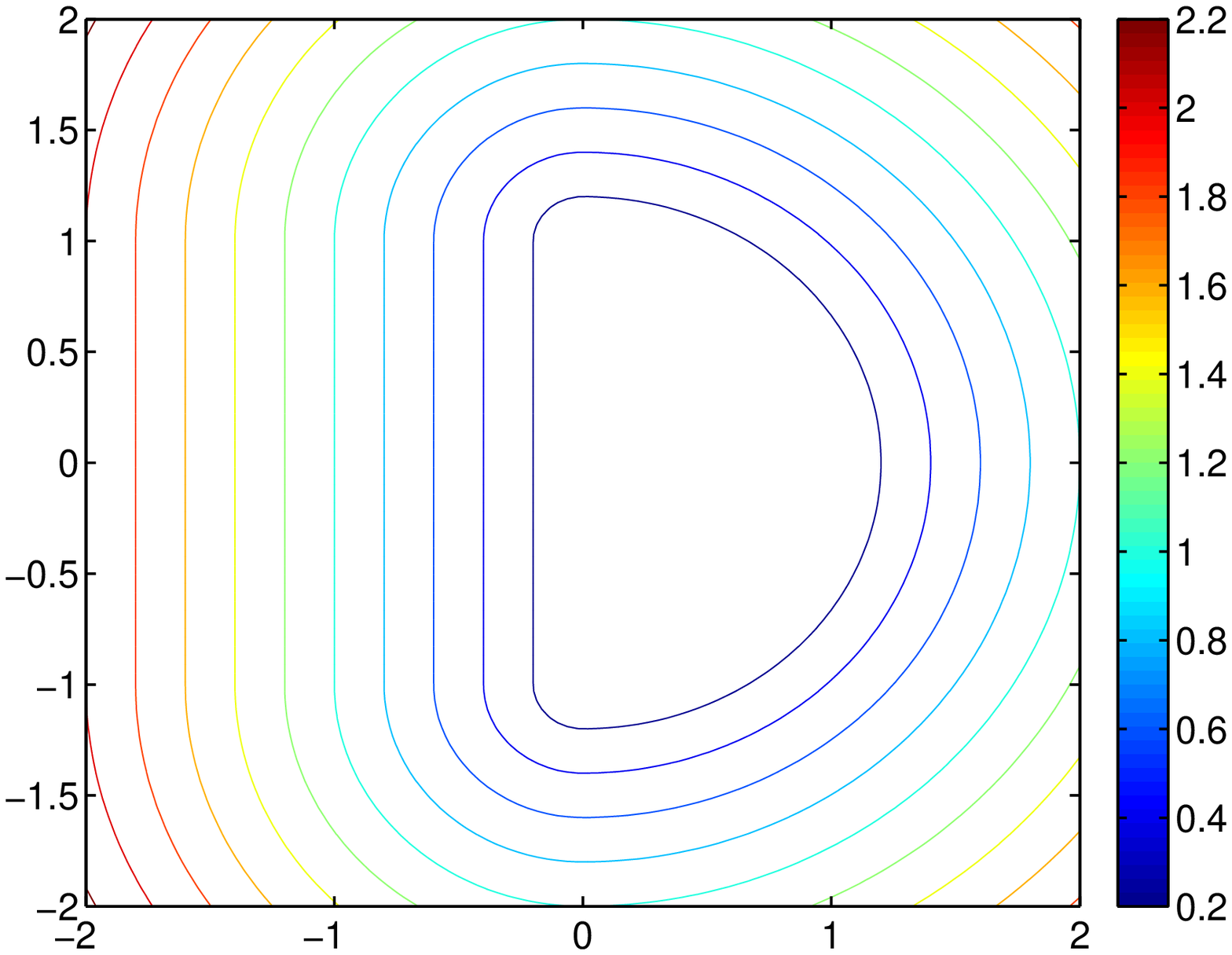}
\end{tabular}
\caption{Profile and contour plots of the solutions of the three examples considered in two dimensions.}
\label{fig:ExProfiles2D}
\end{figure}

\subsection{Computational results in two dimensions}
In this subsection we discuss the computational results obtained in two dimensions. The main purpose is to demonstrate that the filtered scheme achieves the higher order accuracy in the regions where the solution is smooth. We organize the discussion in three parts: accuracy and behavior, order of convergence and upwind vs ENO. We obtained results with the monotone scheme \eqref{monotone2deikonal} and with the respective filtered schemes using as the accurate scheme the second order centered, upwind and ENO schemes.

\begin{table}[htdp]
\centering\footnotesize
\begin{tabular}{ccccccccc}
\multicolumn{9}{c}{Errors and order, $1^{st}$ Example}\\
N & \multicolumn{2}{c}{Monotone} & \multicolumn{2}{c}{$2^{nd}$ centered} & \multicolumn{2}{c}{$2^{nd}$ Upwind} & \multicolumn{2}{c}{$2^{nd}$ ENO}\\
\hline
64 & \num{2.167e-02} & - & \num{9.034e-04} & - & \num{1.107e-03} & - & \num{5.284e-04} & - \\
128 & \num{1.126e-02} & 0.93 & \num{2.368e-04} & 1.91 & \num{3.030e-04} & 1.85 & \num{1.476e-04} & 1.82 \\
256 & \num{5.661e-03} & 0.99 & \num{5.964e-05} & 1.98 & \num{7.627e-05} & 1.98 & \num{3.766e-05} & 1.96 \\
512 & \num{2.854e-03} & 0.99 & \num{1.516e-05} & 1.97 & \num{1.949e-05} & 1.96 & \num{9.682e-06} & 1.95 \\
1024 & \num{1.432e-03} & 0.99 & \num{3.893e-06} & 1.96 & \num{4.903e-06} & 1.99 & \num{2.444e-06} & 1.98 \\
\end{tabular}
\caption{Accuracy and order of convergence of the schemes for the first example in two dimensions in the $l^\infty$ norm.}
\label{table:Ex1errors2D}
\end{table}

\begin{table}[htdp]
\centering\footnotesize
\begin{tabular}{ccccccccc}
\multicolumn{9}{c}{Errors and order, $2^{nd}$ Example}\\
N & \multicolumn{2}{c}{Monotone} & \multicolumn{2}{c}{$2^{nd}$ centered} & \multicolumn{2}{c}{$2^{nd}$ Upwind} & \multicolumn{2}{c}{$2^{nd}$ ENO}\\
\hline
64 & \num{5.128e-02} & - & \num{1.643e-02} & - & \num{1.276e-02} & - & \num{1.297e-02} & - \\
128 & \num{2.663e-02} & 0.93 & \num{1.016e-02} & 0.69 & \num{9.837e-03} & 0.37 & \num{9.514e-03} & 0.44 \\
256 & \num{1.326e-02} & 1.00 & \num{5.485e-03} & 0.88 & \num{5.121e-03} & 0.94 & \num{4.795e-03} & 0.98 \\
512 & \num{6.640e-03} & 1.00 & \num{3.019e-03} & 0.86 & \num{2.600e-03} & 0.98 & \num{2.402e-03} & 0.99 \\
1024 & \num{3.324e-03} & 1.00 & \num{1.483e-03} & 1.02 & \num{1.425e-03} & 0.87 & \num{1.490e-03} & 0.69 \\
\hline\end{tabular}
\caption{Accuracy and order of convergence  of the schemes for the second example in two dimensions in the $l^\infty$ norm.}
\label{table:Ex2errors2D}
\end{table}

\begin{table}[htdp]
\centering\footnotesize
\begin{tabular}{ccccccccc}
\multicolumn{9}{c}{Errors and order, $2^{nd}$ Example}\\
N & \multicolumn{2}{c}{Monotone} & \multicolumn{2}{c}{$2^{nd}$ centered} & \multicolumn{2}{c}{$2^{nd}$ Upwind} & \multicolumn{2}{c}{$2^{nd}$ ENO}\\
\hline
64 & \num{4.310e-01} & - & \num{4.355e-02} & - & \num{7.111e-02} & - & \num{3.589e-02} & - \\
128 & \num{2.202e-01} & 0.96 & \num{1.331e-02} & 1.69 & \num{1.967e-02} & 1.83 & \num{1.002e-02} & 1.82 \\
256 & \num{1.088e-01} & 1.01 & \num{2.893e-03} & 2.19 & \num{4.844e-03} & 2.01 & \num{2.538e-03} & 1.97 \\
512 & \num{5.420e-02} & 1.00 & \num{9.942e-04} & 1.54 & \num{1.233e-03} & 1.97 & \num{6.506e-04} & 1.96 \\
1024 & \num{2.706e-02} & 1.00 & \num{2.697e-04} & 1.88 & \num{3.149e-04} & 1.97 & \num{1.697e-04} & 1.94 \\
\hline\end{tabular}
\caption{Accuracy and order of convergence  of the schemes for the second example in two dimensions in the $l^1$ norm.}
\label{table:Ex2errors2DL1}
\end{table}

\begin{table}[htdp]
\centering\footnotesize
\begin{tabular}{ccccccccc}
\multicolumn{9}{c}{Errors and order, $2^{nd}$ Example}\\
N & \multicolumn{2}{c}{Monotone} & \multicolumn{2}{c}{$2^{nd}$ centered} & \multicolumn{2}{c}{$2^{nd}$ Upwind} & \multicolumn{2}{c}{$2^{nd}$ ENO}\\
\hline
64 & \num{5.771e-02} & - & \num{9.083e-03} & - & \num{9.342e-03} & - & \num{8.811e-03} & - \\
128 & \num{3.541e-02} & 0.70 & \num{4.833e-03} & 0.90 & \num{5.508e-03} & 0.75 & \num{4.566e-03} & 0.94 \\
256 & \num{2.117e-02} & 0.74 & \num{2.399e-03} & 1.00 & \num{3.344e-03} & 0.72 & \num{2.605e-03} & 0.81 \\
512 & \num{1.238e-02} & 0.77 & \num{1.470e-03} & 0.70 & \num{2.523e-03} & 0.41 & \num{1.574e-03} & 0.72 \\
1024 & \num{7.112e-03} & 0.80 & \num{1.024e-03} & 0.52 & \num{1.517e-03} & 0.73 & \num{1.055e-03} & 0.58 \\
\hline\end{tabular}
\caption{Accuracy and order of convergence of the schemes for the third example in two dimensions in the $l^\infty$ norm.}
\label{table:Ex3errors2D}
\end{table}

\textit{Accuracy and behavior of the filtered schemes}
We begin with the results presented in Figure \ref{fig:Exloglog2D} and Tables \ref{table:Ex1errors2D}, \ref{table:Ex2errors2D}, \ref{table:Ex3errors2D}. It is clear the solutions computed using the filtered schemes are more accurate.

The behavior of the filtered schemes is very much like the one obtained in the one-dimensional examples: in first example, the monotone scheme is never used since the solution is smooth; in the second example, it's only used near the singularity at $x=-y$; in the third example, it's only used near the corners of $\Gamma$.

\textit{Order of convergence}
Unlike the one dimensional case for the eikonal equation, the rate of convergence of the error in the $l^\infty$ norm can be less than the formal order of accuracy of the accurate schemes and will depend on the smoothness of the solutions. In the first example, the solution is smooth and we obtain second order convergence in the $l^\infty$ norm (see Figure \ref{fig:Exloglog2D} and Table \ref{table:Ex1errors2D}). This was expected since the ``equivalent'' fast marching method was already proven second order convergent for smooth solutions in \cite{Ahmed}. In the second example, we have a shock of co-dimension $1$ and therefore we get first order rate convergence in the $l^\infty$ norm and second order in the $l^1$ norm (see Figures \ref{fig:Exloglog2D}, \ref{fig:Exloglog2DL1} and Tables \ref{table:Ex2errors2D}, \ref{table:Ex2errors2DL1}). We can still see the second order of convergence in the $l^\infty$ norm if we look away from the singularities (see Figure \ref{fig:Ex2loglog2DRegions}). As for the third example, we do not have shocks, but the solution is still not smooth due to the corners in $\Gamma$ which have a rarefaction effect much like the ones in hyperbolic conversation laws. For instance, in the region $\left\{(x,y)\in\R^2:x<0, y > 1\right\}$ all characteristics emanate from the point $(0,1)$ and so the errors incurred there will propagate out and pollute the solution. Thus the error is globally first order in both the $l^\infty$ and $l^1$ norm. However if we restrict the errors to the region $\left\{(x,y)\in\R^2:x^2+y^2 \geq 1, x \geq 0.1\right\}$ where the solution is smooth we do obtain second order rate of convergence in the $l^\infty$ norm (see Figure \ref{fig:Ex3loglog2DRegions}). Finally, in region $\left\{(x,y)\in\R^2: |y| \leq 0.8, x \leq 0\right\}$, all the schemes were exact up to machine precision since they are exact on flat regions.

\begin{figure}[htdp]
\centering
\begin{tabular}{c}
\includegraphics[width=0.65\textwidth]{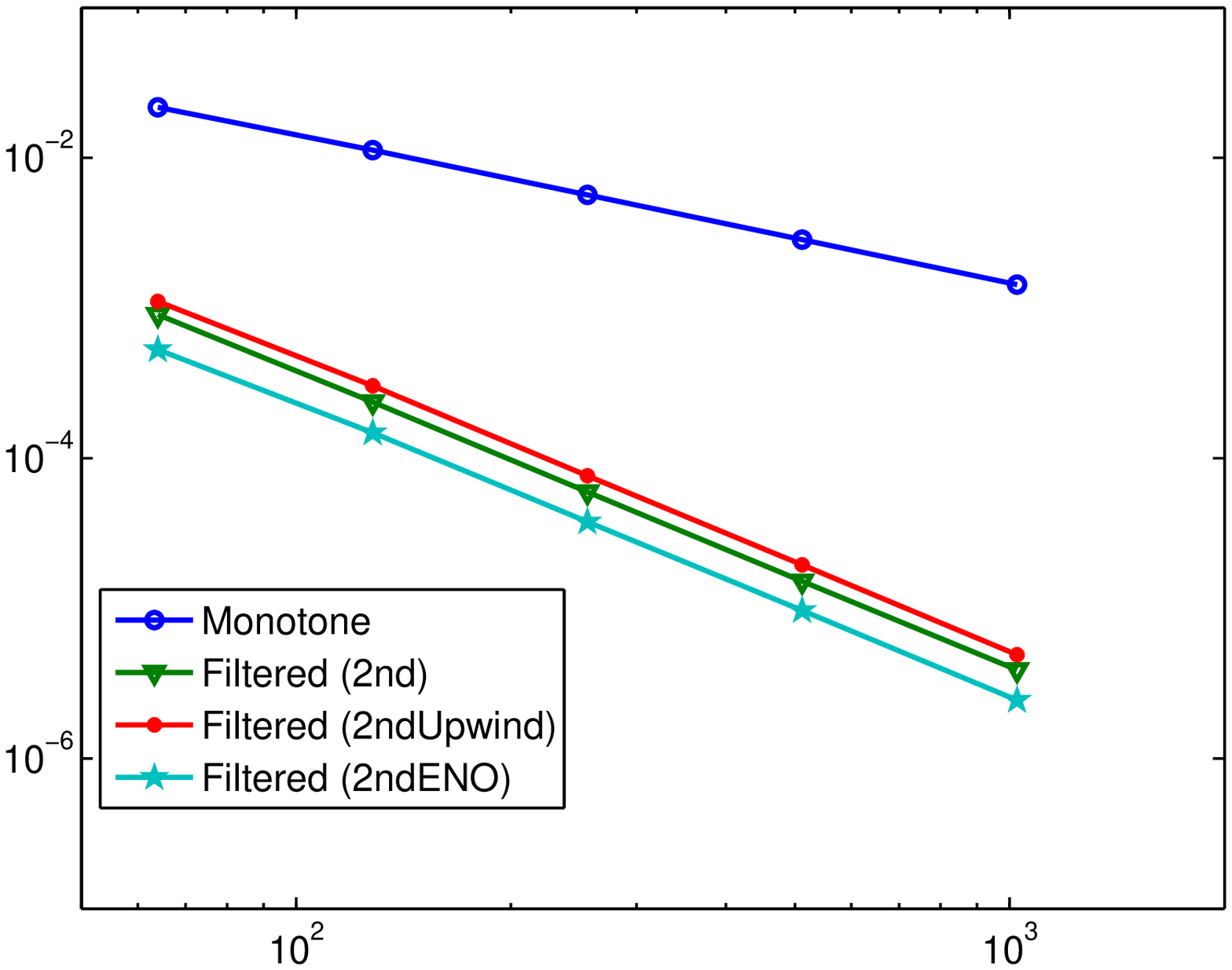}\\
\includegraphics[width=0.65\textwidth]{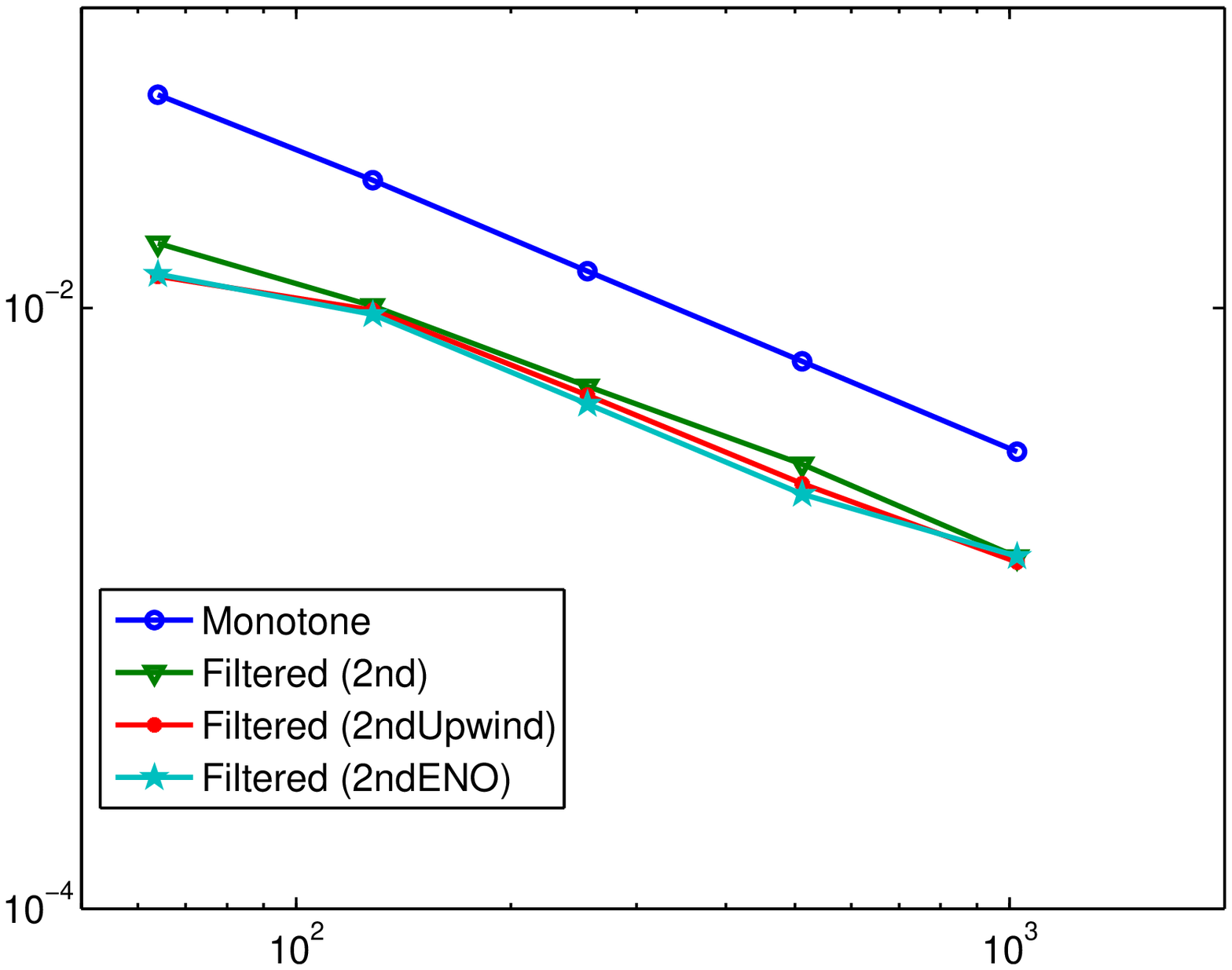}\\
\includegraphics[width=0.65\textwidth]{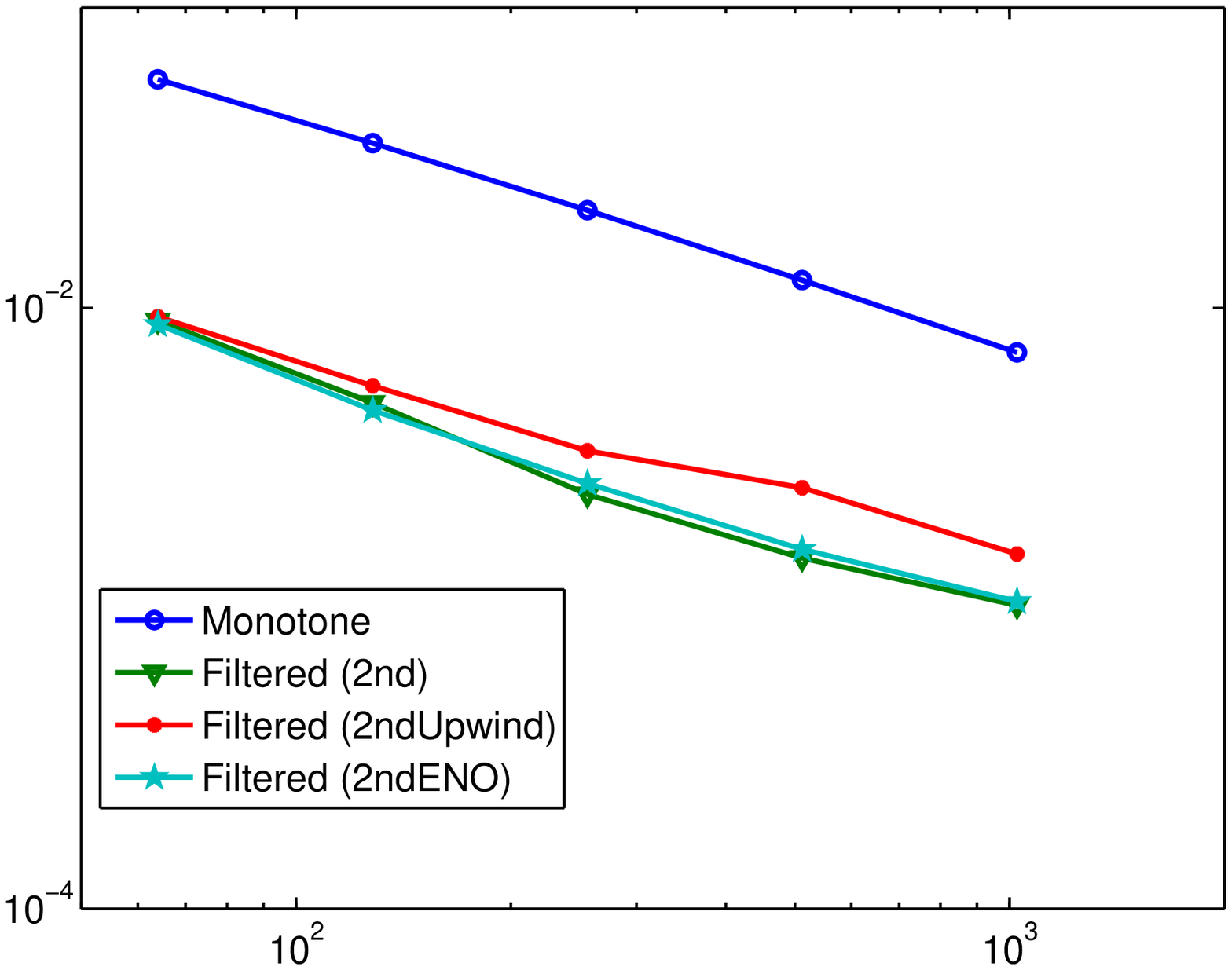}
\end{tabular}
\caption{Log-log plot of the errors for the two-dimensional examples in the $l^\infty$ norm.}
\label{fig:Exloglog2D}
\end{figure}

\begin{figure}[htdp]
\centering
\begin{tabular}{c}
\includegraphics[width=0.65\textwidth]{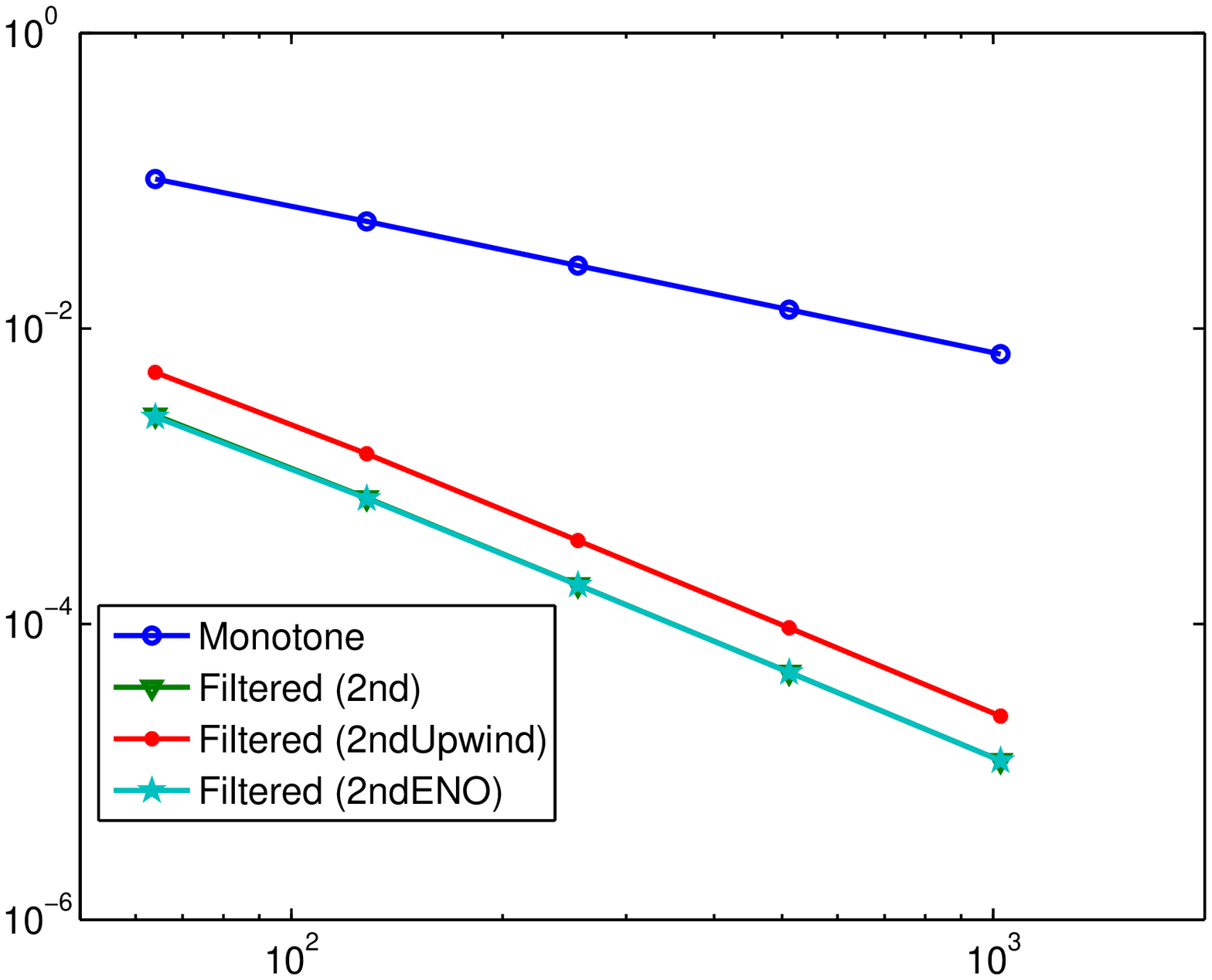}\\
\includegraphics[width=0.65\textwidth]{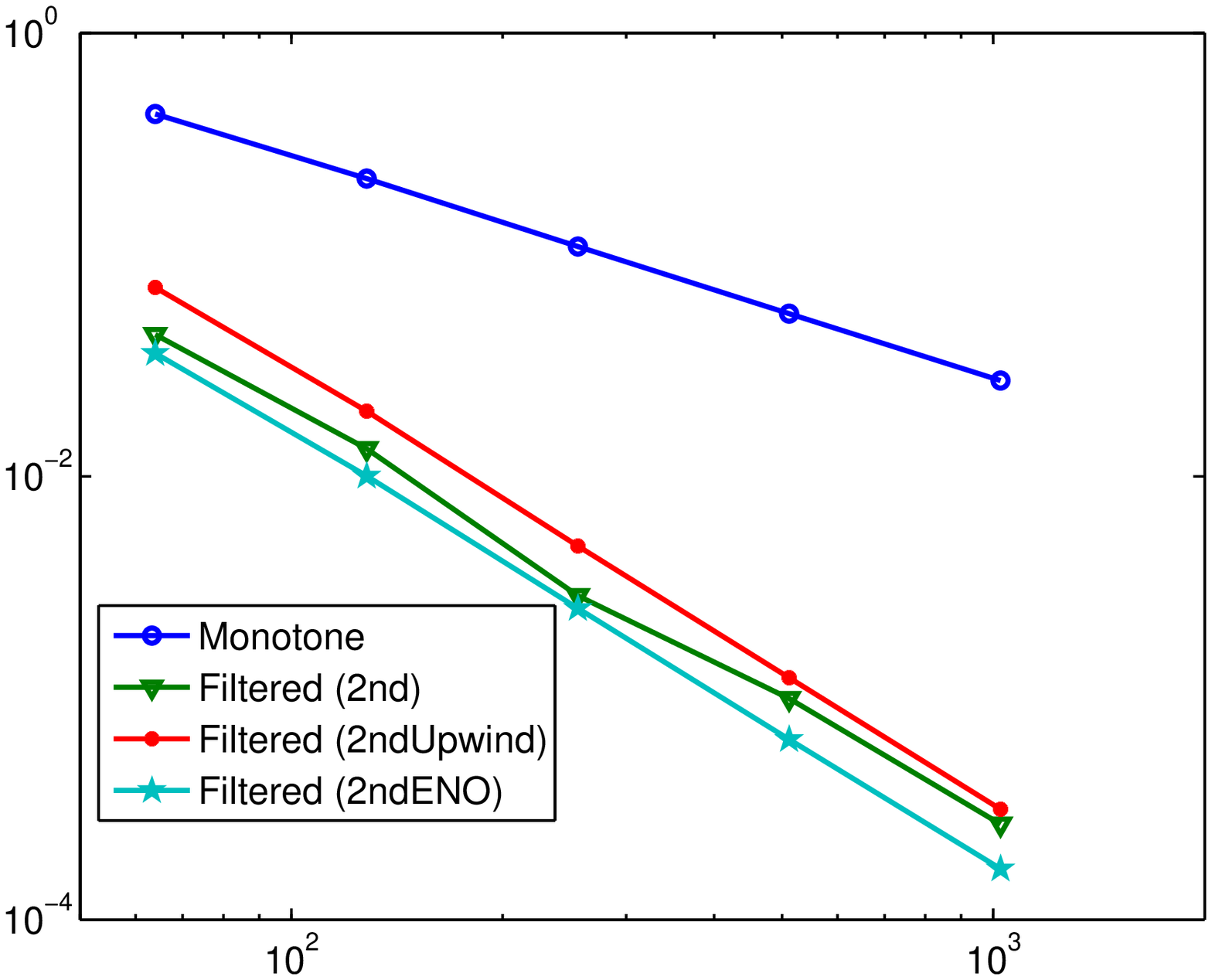}\\
\includegraphics[width=0.65\textwidth]{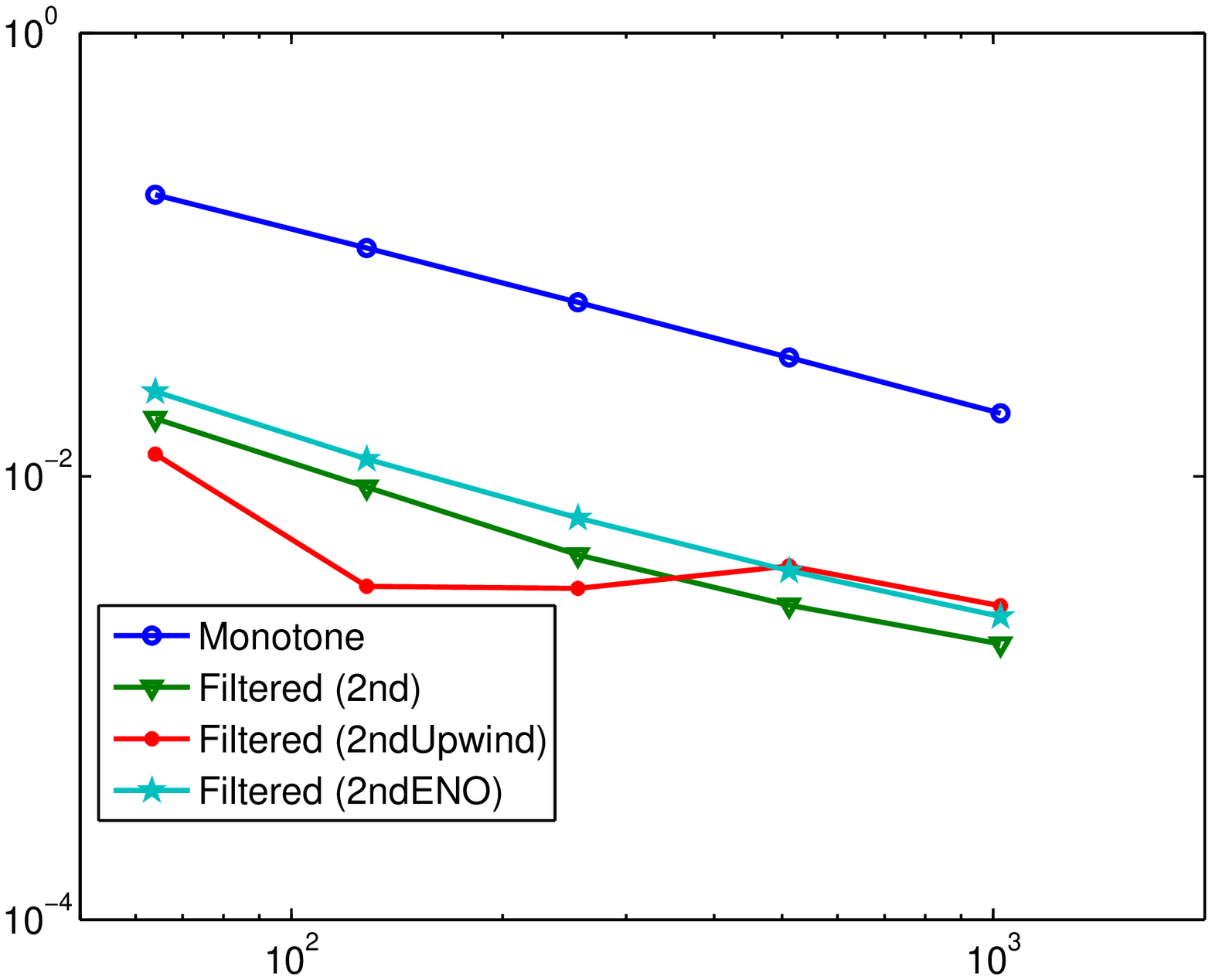}
\end{tabular}
\caption{Log-log plot of the errors for the two-dimensional examples in the $l^1$ norm.}
\label{fig:Exloglog2DL1}
\end{figure}

\begin{figure}[htdp]
\centering
\begin{tabular}{cc}
\hspace{-1in}
\includegraphics[width=0.65\textwidth]{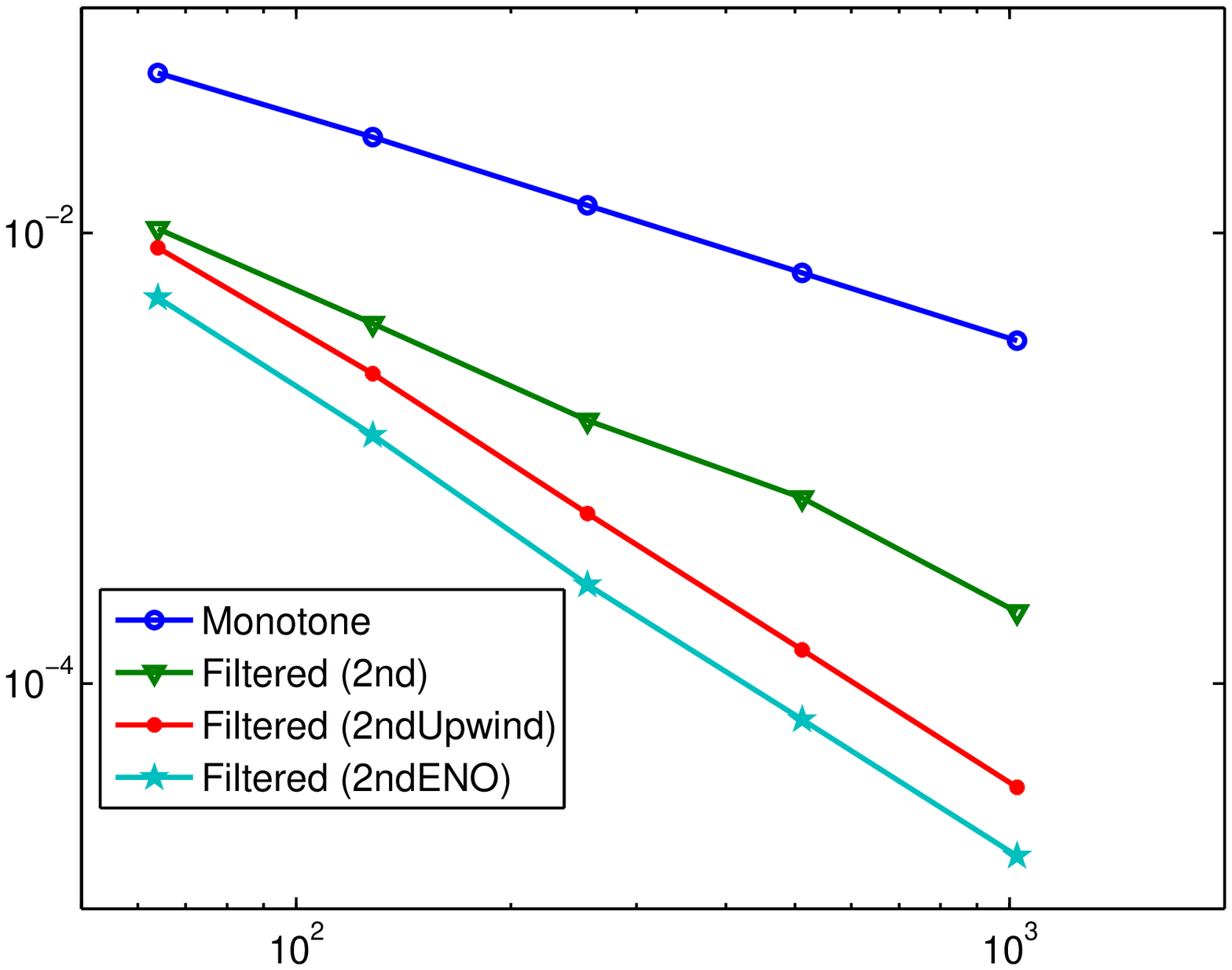} & \includegraphics[width=0.65\textwidth]{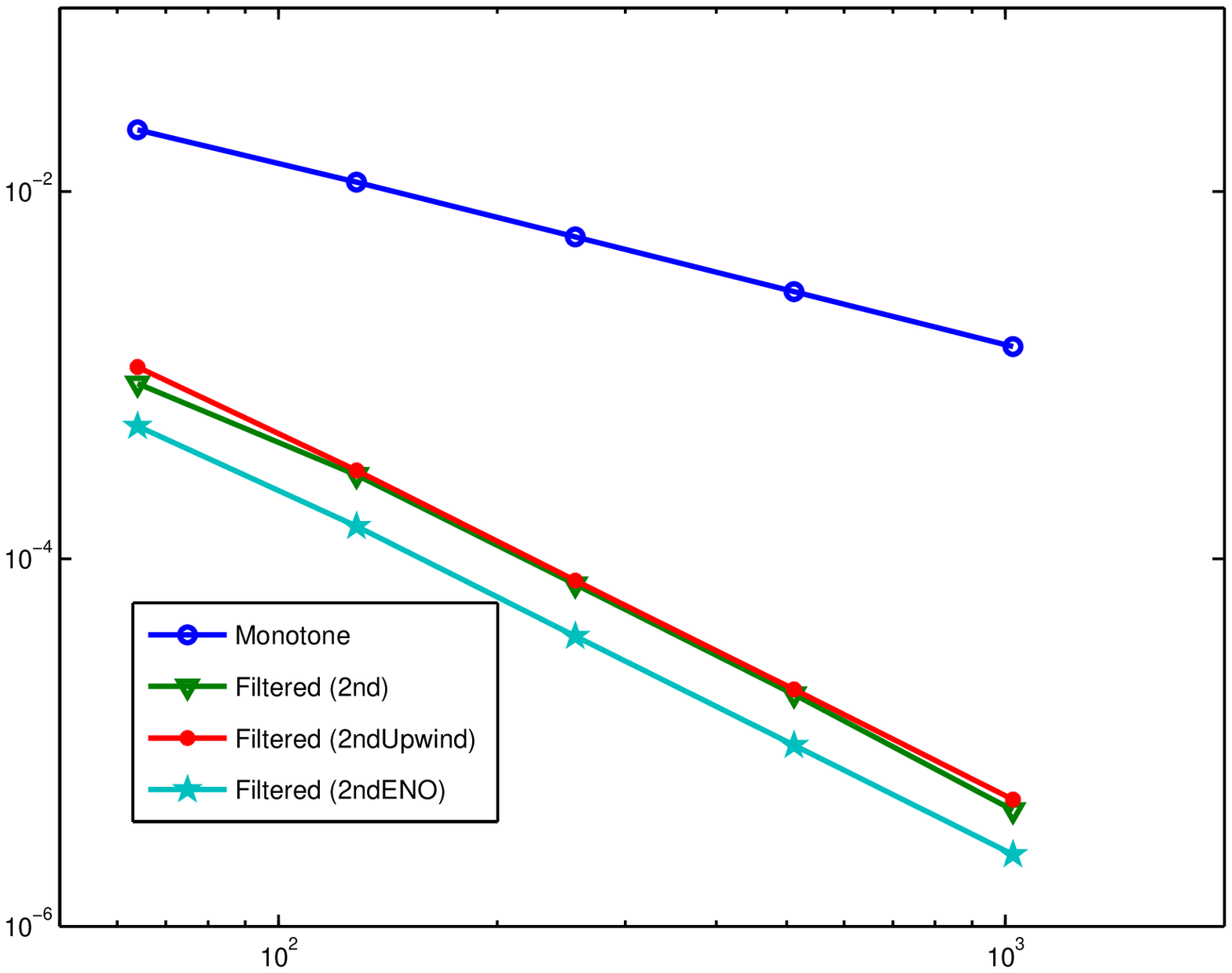}
\end{tabular}
\caption{Log-log plot of the errors for: (a) the second example in the $l^\infty$ norm in regions $\left\{(x,y)\in\R^2: |x+y|>0.1\right\}$; (b) the third example in the $l^\infty$ norm in regions $\left\{(x,y)\in\R^2:x^2+y^2 \geq 1, x \geq 0.1\right\}$.}
\label{fig:Exloglog1D}
\end{figure}

\subsubsection*{Upwind vs ENO}
Comparing the upwind schemes to the ENO schemes, we see that we obtained similar results with the difference being a smaller asymptotical constant. This is explained by the fact that ENO schemes tend to use centered discretizations which have a smaller truncation error than the upwind discretizations.

Third order upwind and ENO filtered schemes were also used, but they didn't show any advantage over the second order schemes. We didn't even obtain the third order rate of convergence for the first example even though the solution is smooth. This is most likely related to a result proven in \cite{Ahmed}. There the authors show that the ``equivalent'' third order fast marching method is unstable. They also provide an alternative scheme which uses full two-dimensional stencils and that it's provable third order globally convergent in the $l^\infty$ norm for smooth solutions. We expect that if we use that scheme as our accurate scheme we would obtain a filtered scheme with the same order of convergence.
\section{Conclusions}

We introduce filtered schemes for Hamilton-Jacobi equations, which allow us to construct convergent, high order accurate finite difference schemes. These schemes are extremely flexible in the choice of accurate scheme, and so they allow for a wide range of existing discretizations (even unstable ones) to be used, while retaining the stability and convergence proof of the monotone schemes.

Focusing on the special, but important case of the eikonal equation, we tested the accuracy of several discretizations on solutions of varying regularity in one and two dimensions. In one dimension, we used filtered central differences, filtered higher order upwinding, and filtered ENO schemes.  In each case we obtained higher accuracy, even in regions where the solution was not smooth. For the eikonal equation case we were able to prove the higher convergence rate. This result, although very special to the eikonal equation, illustrates the potential accuracy of the method.

Due to the explicit nature of the filtered upwind schemes we were able to use the simple but effective fast sweeping method to compute solutions. In the case of filtered ENO, a slower iterative method was used. We also gave a comparison using filtered ENO schemes, and found an example where the error for ENO was greater than its formal accuracy.

The convergence results in two dimensions were more complicated, but more generic, in that we expect similar results on more general HJ equations.   In this case, for smooth solutions, we obtained second order accuracy. The same order of accuracy has been previously obtained by several authors using more complicated schemes as opposed to the simplicity of the upwind filtered schemes. In particular, our filtered upwind schemes in two dimensions are still explicit, thus allowing the use of the fast sweeping method to obtain solutions.

The schemes developed here are simple to implement, and allow an unrestricted choice of higher order discretizations to be used.  While we mainly focused on a particular type equation (HJ equations), it should be clear that the filtered schemes can be used in much wider context, while still retaining the advantages of accuracy, stability and convergence to the viscosity solution of the monotone schemes.

\appendix

\section{Convergence proof of the filtered schemes}\label{appendix}

In this Appendix we give the proof to Theorem \ref{thm:converge}, which we will restate. The Appendix is organized in three parts: in the first one, we briefly discuss viscosity solutions for Hamilton-Jacobi equations; in the second, we recall the definitions of consistency, accuracy, monotonicity and stability for approximations schemes; in the third and last part, we give the convergence proof.

\subsection{Viscosity Solutions}\label{appx:viscosity}

We are interested in solving
\bq\label{HJequationAppendix}\begin{cases}
H(x,\nabla u) = f(x),	& x \in \Omega,\\
u(x) = g(x),			& x \in \Gamma,
\end{cases}
\eq
where $\nabla u$ is the gradient of the function $u$, $\Omega$ is an open set, $\Gamma$ is the boundary of $\Omega$ and the Hamiltonian $H$ is a nonlinear Lipschitz continuous function.

We introduce the function $F: \Omega \times \R \times \Rd \to \R$ which we define as
\[
F(x,r,p) = \begin{cases}
H(x,p)-f(x)	& x \in \Omega,\\
r-g(x)	& x \in \Gamma.
\end{cases}\]
Then $u \in C^1(\Omega)$ is a solution of \eqref{HJequationAppendix} if
\bq\label{PDE}\tag{PDE}
F[u](x) = F(x,u(x),\nabla u(x)) = 0, \quad x \in \Omega.
\eq
However, we won't always have classical solutions which motivates the definition of viscosity solutions, a weak form of solutions. Before we define it, we introduce the upper and lower semicontinuous envelopes of a function.
\begin{definition}[upper and lower semicontinuous envelopes]
The upper and lower semicontinuous envelopes of a function $u$ are defined, respectively, by
\begin{align*}
u^*(x) & = \limsup_{y \to x}u(y),\\
u_*(x) & = \liminf_{y \to x}u(y).
\end{align*}
\end{definition}

It's easy to see that for $F_*$ and $F^*$ we have
\[\begin{cases}
F_*(x,r,p) = F^*(x,r,p) = H(x,p)-f(x)	& x \in \Omega,\\
F_*(x,r,p) = \min\{H(x,p),r-g(x)\}		& x \in \Gamma,\\
F^*(x,r,p) = \max\{H(x,p),r-g(x)\}		& x \in \Gamma.\\
\end{cases}\]

\begin{definition}[viscosity solution]
An upper (lower) semicontinuous function $u$ is a viscosity subsolution (supersolution) of \eqref{PDE} if for every $\phi \in C^1\left(\overline{\Omega}\right)$, whenever $u-\phi$ has a local maximum (minimum) at $x \in \overline{\Omega}$, then $F_*(x,u(x),\nabla u(x)) \leq 0$ $\left(F^*(x,u(x),\nabla u(x)) \geq 0\right)$. A function $u$ is a viscosity solution if it both a subsolution and supersolution.
\end{definition}

\begin{remark}
When checking the definition of a viscosity solution we can limit ourselves to considering unique, strict, global maxima (minima) of $u-\phi$ with a value of zero at the extremum. See, for exemple, \cite[Prop 2.2]{KoikeViscosity}.
\end{remark}

We assume that \eqref{PDE} satisfies a comparison principle: if $u \in \USC\left(\overline{\Omega}\right)$ is a subsolution and $v \in \LSC\left(\overline{\Omega}\right)$ is a supersolution of \eqref{PDE}, then $u \leq v$ on $\overline{\Omega}$. The proof of this result is one of the main technical arguments in the viscosity solutions theory \cite{CIL}.

\subsection{Approximation Schemes}

An approximation scheme is a family of functions parameterized by $h \in \R^+$
\[F^h:\overline{\Omega}\times \R \times L^\infty\left(\overline{\Omega}\right)\to \R\]
which we write as $F^\delta(x,r,u(\cdot))$. Given a function $u \in L^\infty\left(\overline{\Omega}\right)$, we write
\begin{align}\label{PDEe}  \tag*{$(\textnormal{PDE})^h$}
F^h[u](x) = F^h(x,u(x),u(\cdot)).
\end{align}
The function $u^h$ is a solution of the scheme $F^h$ if
\[F^h[u^h](x) = 0, \quad \text{ for all } x \in \overline{\Omega}.\]
In general, the approximation schemes come from finite difference schemes (as they do here in this paper): $h$ is the grid size and the function on the grid is continuously extended to the domain by using interpolation.

We now introduce some important properties for these schemes which guarantee their convergence in a more general setting than in \cite{BSnum}.

\begin{definition}[consistent]
The scheme \ref{PDEe} is consistent with the equation \eqref{PDE} if for any smooth function $\phi$ and $x \in \overline{\Omega}$
\[\limsup_{h\to 0, y\to x,\xi\to 0} F^h(y,\phi(y)+\xi,\phi(\cdot)+\xi) \leq F^*(x,\phi(x),\nabla\phi(x))\]
\[\liminf_{h\to 0, y\to x,\xi\to 0} F^h(y,\phi(y)+\xi,\phi(\cdot)+\xi) \geq F_*(x,\phi(x),\nabla\phi(x)).\]
\end{definition}

\begin{definition}[accurate]
The scheme \ref{PDEe} is $\alpha$-order accurate if for any smooth function $\phi$ and $x \in \Omega$
\[F^h[\phi](x)-F[\phi](x) = \mathcal{O}(h^\alpha).\]
\end{definition}

\begin{remark}
We define accuracy only inside the domain.
\end{remark}

\begin{definition}[stable]
The scheme \ref{PDEe} is stable if any solution $u^h$ of \ref{PDEe} is bounded independently of $h$.
\end{definition}

\begin{definition}[monotone]\label{monotoneAppendix}
The scheme \ref{PDEe} is monotone if for every $h > 0$, $x \in \overline{\Omega}$, $s \in \R$ and $u,v \in L^\infty\left(\overline{\Omega}\right)$,
\[u \geq v \Longrightarrow F^h(x,s,u(\cdot)) \leq F^h(x,s,v(\cdot)).\]
\end{definition}

We recall that in this paper we consider $F^h$ to be the filtered scheme given by
\bq\label{defnFilteredAppendix}
F^h[u] = \begin{cases}
F^h_A[u], & \text{ if } \left|F^h_A[u]-F^h_M[u]\right| \leq \sqrt{h},\\
F^h_M[u],   & \text{otherwise}
\end{cases}
\eq
where we take $F_M^h$ to be a consistent monotone scheme and $F^h_A$ a consistent accurate scheme.

\subsection{Convergence Proof}\label{appx:proof}

We can now give the proof of Theorem \ref{thm:converge}, which we restate.

\begin{theorem}[Convergence of Approximation Schemes]
Let $u$ be the unique viscosity solution of \eqref{HJequation}.
For each $h>0$, let $u^h$ be a stable solution of \ref{PDEe}, where the filtered scheme $F^h$ is given by ~\eqref{defnFilteredAppendix} and $F^h_M$ is a consistent and monotone scheme. Then 
\[
u^h \to u, \quad \text{ locally uniformly,  as } h \to 0.
\]
\end{theorem}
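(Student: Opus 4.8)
The plan is to follow the classical Barles--Souganidis framework, modified to accommodate the lack of strict monotonicity. The core idea is to use the so-called half-relaxed limits: I would define the upper and lower relaxed limits of the family $\{u^h\}$ by
\bq\label{eq:relaxedlimits}
\usub(x) = \limsup_{h \to 0, \, y \to x} u^h(y), \qquad \usup(x) = \liminf_{h \to 0, \, y \to x} u^h(y).
\eq
Since each $u^h$ is stable (hence uniformly bounded by hypothesis), both $\usub$ and $\usup$ are well-defined, finite, and satisfy $\usup \leq \usub$ by construction, with $\usub \in \USC$ and $\usup \in \LSC$. The goal is to show that $\usub$ is a viscosity subsolution and $\usup$ is a viscosity supersolution of \eqref{PDE}. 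Then the assumed comparison principle forces $\usub \leq \usup$ on $\overline{\Omega}$, and combined with $\usup \leq \usub$ this yields $\usub = \usup = u$, the unique viscosity solution. Equality of the relaxed limits is equivalent to locally uniform convergence $u^h \to u$, which is the desired conclusion.

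The heart of the argument is the subsolution property (the supersolution case being symmetric). First I would fix a smooth test function $\phi$ and, invoking the remark in the excerpt, assume $\usub - \phi$ has a strict global maximum at $x_0$ with value zero. A standard lemma then produces a sequence $h \to 0$ and grid points $y_h \to x_0$ at which $u^h - \phi$ attains a local maximum, with $u^h(y_h) \to \usub(x_0)$. Writing $\xi_h = u^h(y_h) - \phi(y_h) \to 0$ and using that $u^h(\cdot) \leq \phi(\cdot) + \xi_h$ near $y_h$, the crucial step is to exploit the monotonicity of $F^h_M$: because $u^h$ solves $F^h[u^h] = 0$, and because $F^h_M$ is monotone, one obtains a clean inequality comparing $F^h_M$ evaluated on $u^h$ with $F^h_M$ evaluated on the shifted test function $\phi + \xi_h$.

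The main obstacle---and the place where the filtered structure matters---is passing from the monotone scheme to the filtered scheme. Here I would use the defining relation \eqref{consistencyforfiltered}, namely $F^h[u] = F^h_M[u] + \bO(h^{1/2})$, which holds pointwise by the very definition \eqref{defnFilteredAppendix}: whichever branch is selected, the filtered value differs from the monotone value by at most $\sqrt{h}$. Consequently $0 = F^h[u^h](y_h) = F^h_M[u^h](y_h) + \bO(h^{1/2})$, so the $\bO(h^{1/2})$ correction is absorbed exactly as a vanishing perturbation of the truncation error. Combining this with monotonicity gives $F^h_M[\phi + \xi_h](y_h) \leq \bO(h^{1/2})$, and then taking $h \to 0$ and applying the consistency of $F^h_M$ yields $F^*(x_0, \phi(x_0), \nabla\phi(x_0)) \leq 0$, establishing the subsolution inequality. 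The supersolution inequality follows by the same reasoning applied to $\usup$ with minima in place of maxima and the lower consistency bound, and the proof concludes as described above via comparison.
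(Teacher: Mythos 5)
Your proposal is correct and follows essentially the same route as the paper's own proof: half-relaxed limits, a stability-of-maxima lemma producing global maximizers $y_h \to x_0$, near-monotonicity of the filtered scheme via the pointwise bound $\left|F^h - F^h_M\right| \le \sqrt{h}$, consistency to pass to the limit, and the comparison principle to conclude. The only slip is in the envelope labels: in the subsolution step it is the liminf (lower) consistency bound that applies, yielding $F_*(x_0,\phi(x_0),\nabla\phi(x_0)) \le 0$ rather than $F^*(x_0,\phi(x_0),\nabla\phi(x_0)) \le 0$, and symmetrically the supersolution step uses the limsup (upper) bound to give $F^* \ge 0$; with the envelopes relabeled this way, your argument is exactly the paper's.
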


\begin{proof}
Define
\begin{align*}
\usub	& = \lim_{h\to 0}\sup_{y\to x} u(y) \in \USC(\overline{\Omega})\\
\usup	& = \lim_{h\to 0}\inf_{y\to x} u(y) \in \LSC(\overline{\Omega})
\end{align*}
From the stability of the solutions $u^h$, it follows that both $\usub$ and $\usup$ are bounded. In addition, we know that $\usup \leq \usub$.

Assume for now that $\usub$ is a subsolution and $\usup$ is a supersolution. Then from the comparison principle for \eqref{PDE} applied to $\usub$ and $\usup$, we conclude that $\usub \leq \usup$. We can then conclude that $\usup = \usub$ and therefore $u$ is the unique solution of \eqref{PDE}, again by the comparison principle for \eqref{PDE}. The local uniform convergence follows from the definitions of $\usub$ and $\usup$.

It then remains to show the claim that $\usub$ is a subsolution and $\usup$ is a supersolution. We proceed to show that $\usub$ is a subsolution since the proof for $\usup$ is similar.

Given a smooth test function $\phi$, let $x_0$ be a strict global maximum of $\usub$ with $\phi(x_0) = \usub(x_0)$. By Lemma \eqref{lemma:stabilitymaxima} below, we can find sequences with
\[
\begin{cases}
h_n \to 0\\
y_n \to x_0\\
u^{h_n}(y_n) \to \usub(x_0)
\end{cases}\]
where $y_n$ is a global maximizer of $u^{h_n}-\phi$.

Define
\bq\label{epsilondef}
\varepsilon_n = u^{h_n}(y_n)-\phi(y_n).
\eq
Then $\varepsilon_n \to \usub(x_0)-\phi(x_0) = 0$ and $u^{h_n}(x)-\phi(x) \leq u^{h_n}(y_n)-\phi(y_n) = \varepsilon_n$ for any $x \in \overline{\Omega}$. In particular,
\bq\label{aux1Appendix}
u^{h_n}(\cdot)-\phi(\cdot) \leq \varepsilon_n.
\eq
We know that
\[u(\cdot) \leq v(\cdot) \Rightarrow F_M^h[u] \geq F_M^h[v]\]
for any $u$ and $v$ bounded due to the monotonicity of the scheme (Definition \ref{monotoneAppendix}. Using now the definition \eqref{defnFilteredAppendix} of $F^h$ we get that for any $u$ and $v$ bounded
\[u(\cdot) \leq v(\cdot) \Rightarrow F^h[u] \geq F^h[v] - 2\sqrt{h},\]
since $\left| |\nabla u^h|^A - |\nabla u^h|^M\right| \leq \sqrt{h}$ and $F_M^h$ is monotone. Hence from \eqref{aux1Appendix} we conclude that
\bq\label{aux2Appendix}
F^{h_n}(x,s,u^{h_n}(\cdot)) \leq F^{h_n}(x,s,\phi(\cdot)+\varepsilon_n).
\eq
We then have
\begin{align*}
0	& = F^{h_n}[u^{h_n}](y_n) \text{ since $u^{h_n}$ is a solution}\\
	& = F^{h_n}(y_n, u^{h_n}(y_n),u^{h_n}(\cdot))\\
	& = F^{h_n}(y_n,\phi(y_n)+\varepsilon_n,u^{h_n}(\cdot)) \text{ by } \ref{epsilondef}\\
	& \geq F^{h_n}(y_n,\phi(y_n)+\varepsilon_n,\phi(\cdot)+\varepsilon_n) - 2 \sqrt{h} \text{ by } \ref{aux2Appendix}\\
\end{align*}
Finally, taking the $\liminf$ we get
\begin{align*}
0	& \geq \liminf_{n\to \infty} \left\{F^{h_n}(y_n,\phi(y_n)+\varepsilon_n,\phi(\cdot)+\varepsilon_n)-2\sqrt{h_n}\right\}\\
	& \geq \liminf_{h_n\to 0,y\to x_0,\varepsilon\to 0} F^{h_n}(y,\phi(y)+\varepsilon,\phi(\cdot)+\varepsilon)\\
	& = F_*(x_0,\phi(x_0),\nabla \phi(x_0))\\
	& = F_*(x_0,\usub(x_0),\nabla \phi(x_0))
\end{align*}
which shows that $\usub$ is a subsolution.
\end{proof}

\begin{lemma}[stability of maxima]\label{lemma:stabilitymaxima}
Suppose the family $u^h$ is bounded uniformly in $h$. Define 
\[\usub(x) = \limsup_{h\to 0, y\to x} u^h(u) \in \USC(\overline{\Omega}).\]
Given a smooth function $\phi$, let $x_0$ be a strict global maximum of $\usub-\phi$ with $\usub(x_0)=\phi(x_0)$. Then there exists sequences
\[
\begin{cases}
h_n \to 0\\
y_n \to x_0\\
u^{h_n}(y_n) \to \usub(x_0)
\end{cases}\]
where $y_n$ is a global maximizer of $u^{h_n}-\phi$.
\end{lemma}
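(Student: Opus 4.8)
The plan is to run the classical ``stability of maxima'' argument underlying the half-relaxed-limits method of Barles and Souganidis. First I would introduce the maximal values $m_h = \sup_{\overline\Omega}\bigl(u^h-\phi\bigr)$, which form a bounded family of real numbers because the $u^h$ are uniformly bounded and $\phi$ is continuous on the compact set $\overline\Omega$. For each $h$ I would select a point $y_h \in \overline\Omega$ at which $u^h-\phi$ attains (or nearly attains) this supremum, and then, by compactness of $\overline\Omega$, pass to a subsequence $h_n \to 0$ along which $y_{h_n}$ converges to some $y_* \in \overline\Omega$. The whole point is to show that $y_* = x_0$ and that the maximal values converge to $0$; the three required sequences are then $y_n := y_{h_n}$.

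Second, I would establish a two-sided bound on $\liminf_n m_{h_n}$ and $\limsup_n m_{h_n}$. For the lower bound I would use the definition of $\usub(x_0) = \limsup_{h\to0,\,y\to x_0} u^h(y)$: it produces a recovery sequence $h_k \to 0$, $z_k \to x_0$ with $u^{h_k}(z_k) \to \usub(x_0)$. Evaluating $m_{h_k}$ along this same sequence gives $m_{h_k} \ge u^{h_k}(z_k)-\phi(z_k) \to \usub(x_0)-\phi(x_0) = 0$, so $\liminf_k m_{h_k} \ge 0$. For the upper bound I would apply the definition of $\usub$ at the limit point $y_*$: since $y_{h_k}\to y_*$ and $h_k\to 0$, the relaxed $\limsup$ yields $\limsup_k u^{h_k}(y_{h_k}) \le \usub(y_*)$, while $\phi(y_{h_k})\to\phi(y_*)$ by continuity, whence $\limsup_k m_{h_k} \le \usub(y_*)-\phi(y_*)$.

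Third, I would combine these with the strict-maximum hypothesis. Chaining the inequalities gives $\usub(y_*)-\phi(y_*) \ge 0$, but $x_0$ is a strict global maximum of $\usub-\phi$ with value $\usub(x_0)-\phi(x_0)=0$, so $\usub - \phi \le 0$ everywhere with equality only at $x_0$. This forces $\usub(y_*)-\phi(y_*)=0$ and, by strictness, $y_* = x_0$. The lower and upper bounds then squeeze $m_{h_k}\to 0$, and consequently $u^{h_k}(y_{h_k}) = m_{h_k}+\phi(y_{h_k}) \to \phi(x_0) = \usub(x_0)$. Relabelling the subsequence produces exactly the claimed $h_n$, $y_n$, and $u^{h_n}(y_n)$.

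The main obstacle is purely technical: justifying that $u^h-\phi$ actually attains its supremum at some $y_h$, since the lattice functions are not continuous a priori. I would resolve this by recalling that in our finite-difference setting $u^h$ is extended off the grid by interpolation, hence continuous on the compact set $\overline\Omega$, so the maximum is attained. If one prefers to avoid any regularity assumption on the extension, the same argument goes through verbatim with near-maximizers $y_h$ satisfying $u^h(y_h)-\phi(y_h) \ge m_h - h$, since the extra $\bO(h)$ error vanishes in the limit.
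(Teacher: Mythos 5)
Your proposal is correct, and it follows the same half-relaxed-limits skeleton as the paper's proof: a recovery sequence $z_k \to x_0$ from the definition of $\usub(x_0)$ gives the lower bound on the maximal values, the global maximizers give the upper bound, and strictness of the maximum pins down their location. The one genuine structural difference is how the location is pinned down. The paper never extracts a convergent subsequence of maximizers: it argues by contradiction, using the strict maximum to produce a uniform gap $K>0$ outside a ball around $x_0$, together with the pointwise bound $u^{h_n}(y_n) \leq \usub(y_n) + \delta$ for large $n$, to contradict $u^{h_n}(y_n)-\phi(y_n) \to 0$. You instead pass by compactness to a subsequence with $y_{h_k} \to y_*$ and identify $y_* = x_0$ from the inequality $\limsup_k u^{h_k}(y_{h_k}) \leq \usub(y_*)$, which holds directly by definition of the relaxed limsup at the \emph{fixed} point $y_*$. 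Your route is actually the more careful one: the paper's intermediate bound at the \emph{moving} points $y_n$ is not a pointwise consequence of the definition of $\usub$ (it can fail when $u^h$ concentrates mass at points traveling with $h$), and the standard way to make it rigorous is precisely your subsequence argument; what the paper's version buys in exchange is convergence of the whole sequence of maximizers along the recovery sequence, which is more than the lemma needs. Two minor points to tidy on your side: the compactness extraction should be performed along the recovery sequence $h_k$ (your first paragraph extracts the convergent subsequence before that sequence has been introduced, so the indices need relabeling); and the near-maximizer fallback proves a slightly weaker statement than the lemma as written, since the lemma asserts $y_n$ is a genuine global maximizer of $u^{h_n}-\phi$ --- though genuine maximizers do exist in this setting because, as you note (and as the paper implicitly assumes), $u^h$ is extended continuously off the grid by interpolation.
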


\begin{proof}
From the definition of $\limsup$, there are sequences such that
\[
\begin{cases}
h_n \to 0,\\
z_n \to x_0,\\
u^{h_n}(z_n) \to \usub(x_0).
\end{cases}\]

Let $y_n \in \overline{\Omega}$ be the global maximizers of $u^{h_n}(\cdot)-\phi(\cdot)$. Then we have
\[u^{h_n}(y_n)-\phi(y_n) \geq u^{h_n}(z_n)-\phi(z_n) \to \usub(x_0)-\phi(x_0) = 0.\]
In addition, for any $\delta>0$ and large enough $n$,
\[u^{h_n}(y_n)-\phi(y_n) \leq \usub(y_n)-\phi(y_n)+\delta \leq \usub(x_0)-\phi(x_0)+\delta = \delta\]
where we used the fact that $x_0$ is a global maximum of $\usub-\phi$ with $\usub(x_0) = \phi(x_0)$. Thus we conclude that
\[u^{h_n}(y_n)-\phi(y_n)\to 0.\]

Now, we show by contradiction that $y_n\to x_0$. Suppose not. Then, by passing to a subsequence if needed there is an $R > 0$ such that $|y_n-x_0| > R$. Moreover, since $\usub-\phi$ has a strict, global and unique maximum at $x_0$ with value zero, there is a $K>0$ such that
\[\usub(y)-\phi(y) < -K\]
whenever $|y-x_0|>R$. For $n$ large enough we have
\[u^{h_n}(y_n) \leq \usub(y_n)+\frac{K}{2}\]
and so
\[u^{h_n}(y_n)-\phi(y_n) \leq \usub(y_n)-\phi(y_n) + \frac{K}{2} < -K+\frac{K}{2} = -\frac{K}{2}\]
which contradicts the fact that $u^{h_n}(y_n)-\phi(y_n) \to 0$. We then conclude that $y_n \to x_0$.

Finally we see that
\begin{align*}
|u^{h_n}(y_n)-\usub(x_0)|	& = |u^{h_n}(y_n)-\phi(x_0)|\\
							& \leq |u^{h_n}(y_n)-\phi(y_n)|+|\phi(y_n)-\phi(x_0)|\\
							& \to 0
\end{align*}
and therefore $u^{h_n}(y_n)\to \usub(x_0)$ as desired.

\end{proof}

\bibliographystyle{alpha}
\bibliography{../biblio/FilteredSchemes}

\end{document}